\documentclass[11pt]{article}
\usepackage{multicol}
\usepackage{amsfonts}
\usepackage{latexsym,amssymb,amsmath,graphics,cite}

\usepackage[utf8]{inputenc}
\DeclareUnicodeCharacter{0394}{\ensuremath{\Delta}}
 \usepackage{xcolor}
 \usepackage{listings}
\usepackage{fancyhdr}
\usepackage{lipsum}
\usepackage{lineno}
\usepackage{tikz}
\usepackage{CJK}
\usepackage{graphicx}
\usepackage{multirow}
\usepackage{float}
\usepackage{caption}
\usepackage{diagbox}
\usepackage{multirow}
\usepackage{setspace}
\usepackage{appendix}
\usepackage{tabu}
\begin{document}
\newcommand{\qed}{\hphantom{.}\hfill $\Box$\medbreak}
\newcommand{\Proof}{\noindent{\bf Proof \ }}
\newcommand{\rnote}[1]{{\color{red}{\sf #1}}}

\newtheorem{theorem}{Theorem}[section]
\newtheorem{lemma}[theorem]{Lemma}
\newtheorem{proposition}[theorem]{Proposition}
\newtheorem{corollary}[theorem]{Corollary}
\newtheorem{remark}[theorem]{Remark}
\newtheorem{example}[theorem]{Example}
\newtheorem{definition}[theorem]{Definition}
\newtheorem{Construction}[theorem]{Construction}

\title{Geometric difference families  and  related \\ variable-weight geometric orthogonal codes\footnote{Supported by the Science Research Project of Hebei Education Department (Grant No. JCZX2026032), the Natural Science Foundation of Hebei Province (Grant No. A2025205023, A2023205045); the Graduate Innovation Project of School of Mathematical Sciences of Hebei Normal University (Grant No. ycxzzbs202602).}}

\author{Huizhen Gao$^{a}$, Yichen Bai$^{a}$, Xiaowei Su$^{b}$, Zihong Tian$^{a,c}$\thanks{Corresponding author. E-mail address: tianzh68@163.com.} \\
\small{$^{a}$School of Mathematical Sciences, Hebei Normal University, Shijiazhuang 050024,   China}\\
\small {$^{b}$ College of Basic Education, Beijing Institute of Economics and Management, Beijing 100102,   China}\\
\small{$^{c}$Hebei Research Center of the Basic Discipline Pure Mathematics, Shijiazhuang 050024,   China}}

\date{}

\maketitle

\noindent {\bf Abstract:}
 Variable-weight geometric orthogonal codes (GOCs) were introduced  by Doty and Winslow for designing the macrobonds in 3D DNA origami. Recently,   Wang et al. introduced  the concept of geometric difference families (GDFs) and established the equivalence between GOCs and a certain type of GDFs.  Based on the relationship, Su et al. gave  two classes of  variable-weight  GOCs. In this paper, we first prove the existence of  $(n\times m, K, 1)$-GDFs for $K=\{3,5\}$ and  $\{3,6\}$. For further research,  we completely determine the existence of all the remaining parameters for an $(n\times m, \{3,4,5\}, 1)$-GDF  by Su et al.    As consequences, the  perfect $(n\times m,K,1)$-GOCs   for $K= \{3,5\}$, $\{3,6\}$ and $\{3,4,5\}$  are obtained.\vspace{0.2cm}

{\bf Keywords}: geometric difference families, geometric difference packings,  variable-weight geometric orthogonal codes

\section{Introduction}
Let $N $ and $M$ be two sets of integers containing $0$ such that if $a\in N$ (or $a\in M$) then $-a\in N$ (or $-a\in M$). Let  $K$ be a set of positive integers. For any $k$-subset $B$ of $N \times M$, where $k\in K$, define the multi-set
$$\Delta B :=\{(x_{1}-x_{2},y_{1}-y_{2}): (x_{1},y_{1}), (x_{2},y_{2})\in B, (x_{1},y_{1})\neq (x_{2},y_{2})\}$$
as the \emph{list of differences} from $B$. An \emph{$(N \times M,K,1)$-geometric difference packing} (briefly GDP) $\mathcal{B}$ is a collection of subsets (called \emph{base~blocks}) of $N \times M$ of sizes from $K$ such that the multi-set $\Delta\mathcal{B}:=\bigcup_{B\in\mathcal{B}}\Delta B$ covers every element of $N \times M$ at most once. $(N \times M)\setminus \Delta\mathcal{B}$ is called the \emph{difference leave} or \emph{leave} of this packing. If the leave consists only of $(0,0)$, $\mathcal{B}$ is called an \emph{$(N \times M,K,1)$-geometric difference family} (briefly GDF). As usual, an $(N \times M,\{k\},1)$-GDP (or -GDF) will be abbreviated to an $(N \times M,k,1)$-GDP (or -GDF).

For convenience, in what follows we always assume that $[a,b]=\{a,a+1,\dots,b\}$ for integers $a,b$ with $a<b$, and
$$[n]:=[-\frac{n-1}{2},\frac{n-1}{2}]$$
for any odd integer $n$, unless otherwise specified.
We usually write an $(n\times m,K,1)$-GDP (or -GDF) instead of an $([n]\times [m],K,1)$-GDP (or -GDF). Obviously, an $(n\times m,K,1)$-GDP (or -GDF) implies an $(m\times n,K,1)$-GDP (or -GDF).

When $m=1$, an $(n\times m,K,1)$-GDP (or -GDF) is simply written as an $(n,K,1)$-GDP (or -GDF), and the second coordinates of all elements in the base blocks will be omitted.  Bermond et al. \cite{Bermond} proved that geometric difference families do not exist for any $k\geq6$. For $k=3,4$, the existence problems have been completely settled.
However, the existence results for $k=5$ are even more scarce. Here, we list some known results for $(n,k,1)$-GDFs.

\begin{lemma}{\rm (Abel et al. \cite{Difference-families}, Beth et al. \cite{Beth}, Ge et al. \cite{GeG}, Liu et al. \cite{Liu})}\label{n,3,1}\\
$(1)$~There exists an $(n,3,1)$-GDF if and only if $n\equiv1,7\pmod{24};$\\
$(2)$~There exists an $(n,4,1)$-GDF if and only if $n \equiv 1\pmod{12}$, $n \geq 13$, and $n \notin \{25,37\};$\\
$(3)$~There exists a $(20t+1,5,1)$-GDF for $t=6,8,10;$\\
$(4)$~There are no $(n,5,1)$-GDFs for $n\equiv21\pmod{40}$~or~$n=41,81.$
\end{lemma}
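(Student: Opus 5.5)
This lemma collects results from the literature (Abel et al., Beth et al., Ge et al., Liu et al.). The plan is to derive each part from those sources, recording only the translation between the paper's language and theirs. The key observation is that an $(n,k,1)$-GDF lives on $[n]=[-\frac{n-1}{2},\frac{n-1}{2}]$ and has difference leave $\{0\}$. Because $\Delta B$ is closed under negation, this is equivalent to requiring that the positive differences $x_1-x_2>0$ cover each of $1,\dots,\frac{n-1}{2}$ exactly once. Translating every block into $\{0,1,\dots\}$ shows that this is the same object as a \emph{perfect} (or Golomb-ruler-type) difference family, equivalently a perfect $(n,k,1)$ difference family in $\mathbb{Z}_n$ realized without wraparound. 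I will invoke the existence theorems for those objects directly.

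For the necessary conditions, count differences. Each $k$-block contributes $k(k-1)/2$ positive differences, so with $b$ blocks we need $b\,k(k-1)/2=(n-1)/2$, i.e.\ $n\equiv1\pmod{k(k-1)}$.

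\emph{Part (1).} This gives $n\equiv1\pmod 6$. Split into Skolem-type parity classes. Write $n=6t+1$ and the blocks as $\{0,a_i,a_i+b_i\}$ with $\{a_i,b_i,a_i+b_i\}$ partitioning $[1,3t]$. Summing the largest difference of each block gives $\sum (a_i+b_i)=\frac12\cdot\frac{3t(3t+1)}{2}$, which must be an integer. This forces $3t(3t+1)\equiv0\pmod4$, so $t\equiv0,1\pmod4$, i.e.\ $n\equiv1,7\pmod{24}$. Sufficiency is the classical Skolem/Langford (perfect Skolem) construction cited from Abel et al.

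\emph{Part (2).} The count gives $n\equiv1\pmod{12}$. The small exceptions $n=25,37$ come from the exhaustive searches cited (Bermond et al.\ style). Existence for the remaining values is the perfect $(v,4,1)$-DF result of Ge et al.

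\emph{Parts (3) and (4).} Part (3) is the explicit constructions of Liu et al. For Part (4), write $n=20t+1$ with $t$ odd, i.e.\ $n\equiv21\pmod{40}$. A parity argument on the differences, analogous to Part (1), rules these out: each 5-block $\{x_1<\dots<x_5\}$ has its ten differences summing to $4(x_5-x_1)+2(x_4-x_2)$, which is even. Hence $\sum_{d=1}^{10t}d=5t(10t+1)$ must be even, which forces $t$ even. The cases $n=41,81$ are excluded by exhaustive computer search, cited.

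The main obstacle is the sufficiency in Part (2), which is a long recursive construction. I will simply cite it rather than reprove it.
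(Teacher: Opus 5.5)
Your proposal is correct and takes the same route as the paper, which gives no proof of this lemma and simply quotes it from the cited literature. The elementary necessity arguments you add are all correct: the difference count, the Skolem-type parity argument giving $n\equiv 1,7\pmod{24}$, and the evenness of $4(x_5-x_1)+2(x_4-x_2)$ ruling out $n\equiv 21\pmod{40}$, the last being the same computation the paper itself uses in Lemma~\ref{non}.
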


$(n,K,1)$-GDFs are interesting not only in their own right, but also in their relation to many other combinatorial configurations such as semi-cyclic holey group divisible designs \cite{Feng} and optimal variable-weight optical orthogonal codes \cite{Jiang, WuD, yang}. We quote some results for later use. Note that an $(n,\{k,s^{*}\},1)$-GDF is an $(n,\{k,s\},1)$-GDF with exactly one base block of size $s$, and the number of base blocks of size $k$ is greater than zero.

\begin{lemma}{\rm (Wu et al. \cite{WuD})}\label{n-345}\\
$(1)$~There exists an $(n,\{3,4^{*}\},1)$-GDF if and only if $n\equiv1\pmod{6}$ and $n\geq19;$\\
$(2)$~There exists an $(n,\{3,5^{*}\},1)$-GDF if and only if $n\equiv9,15\pmod{24}$ and $n\geq33;$\\
$(3)$~There exists an $(n,\{3,6^{*}\},1)$-GDF if and only if $n\equiv1\pmod{6}$ and $n\geq43.$
\end{lemma}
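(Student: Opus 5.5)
The plan is to treat necessity by counting and parity arguments, and sufficiency by reducing the triples to partitions of integer sets into difference triples $\{x,y,x+y\}$, which Skolem-type and Langford-type sequences supply. Write $N=(n-1)/2$, so the nonzero elements of $[n]$ are $\pm1,\dots,\pm N$. A base block of size $k$ contributes $k(k-1)/2$ positive differences, and these must partition $[1,N]$.

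\textbf{Necessity.} With $b$ triples and one block of size $s$, counting gives $N=3b+s(s-1)/2$.
\begin{itemize}
\item For $s=4$ this gives $N\equiv0\pmod3$, i.e.\ $n\equiv1\pmod6$.
\item For $s=6$ it gives the same condition $n\equiv1\pmod6$.
\item For $s=5$ it gives $N\equiv1\pmod3$. Here I add a parity argument. For a triple $a<b<c$ the positive differences sum to $2(c-a)$, which is even. For a $5$-set $a<b<c<d<e$ they sum to $4e+2d-2b-4a$, also even. Hence $N(N+1)/2$ is even, so $N\equiv0,3\pmod4$. Combined with $N\equiv1\pmod3$ this gives $N\equiv4,7\pmod{12}$, i.e.\ $n\equiv9,15\pmod{24}$.
\item For $s=4,6$ the same sums have the parity of the sum of the block's elements, so no extra parity condition arises. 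This is consistent with the statement.
\end{itemize}

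\textbf{Lower bounds.} A $6$-set with all $15$ positive differences distinct is a Golomb ruler, whose length is at least $17$, so $N\ge17$; together with $n\equiv1\pmod6$ this forces $n\ge37$. The remaining small cases are $n=7,13$ for $s=4$, $n=9,15$ for $s=5$ (the values $n\equiv9,15\pmod{24}$ below $33$), and $n=37$ for $s=6$. I would exclude these by a short exhaustive search over the possible special blocks (up to translation and reflection). For each candidate block I would check whether the complement of its differences in $[1,N]$ splits into triples $\{x,y,x+y\}$. Several cases die immediately, e.g.\ for $n=7$ we get $N=3<6$.

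\textbf{Sufficiency.} I fix a special block whose difference set $D\subset[1,N]$ has a controlled shape. Examples are $\{0,1,3,7\}$ for $s=4$ and a near-Golomb $5$- or $6$-set whose differences are an initial interval plus a few isolated values. The task is then to partition $[1,N]\setminus D$ into triples $\{x,y,x+y\}$. A triple $\{x,y,x+y\}$ yields the base block $\{0,x,x+y\}$, and distinct triples give disjoint differences.

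When $D$ is an initial segment $[1,d-1]$, such partitions are exactly Langford sequences of defect $d$ (or hooked ones), whose existence conditions are known (Simpson). Otherwise I would combine a Skolem/Langford partition of a large interval with a hand-built partition of the small irregular leftover. The leftover is handled by explicit families of triples, indexed by $N$ modulo $12$ or $24$, with a few sporadic $n$ done by computer.

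The main obstacle is this last step. The difference set of a $5$- or $6$-block is not an interval, and the sets removed from $[1,N]$ are sparse and large. So I must pick the special block and a few extra hand-made triples so that what remains is an interval $[d,N']$ satisfying exactly the Langford existence conditions, with $d$ small relative to $N'$, simultaneously in every residue class. I would first try placing the special block's large differences near $N$ and absorbing them, together with a few neighbours, into explicit triples.
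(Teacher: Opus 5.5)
The paper does not prove this lemma; it quotes it from \cite{WuD}, so your argument has to stand on its own. Your necessity part is essentially right: the counting, the parity argument giving $N\equiv 0,3\pmod 4$ for $s=5$, the Golomb bound, and a search for $n=37$ all work. There is one slip. The case $n=13$ for $s=4$ is \emph{not} excluded by your search, because $\{0,1,4,6\}$ has $D=[1,6]$ and its empty complement splits trivially into zero triples. This case is excluded only by the definition of a $\{3,4^{*}\}$-GDF, which requires at least one triple, i.e.\ $b\geq 1$. That condition gives $n\geq 19$ at once, and likewise the lower bounds for $s=5,6$ apart from $n=37$.

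The genuine gap is sufficiency. You construct nothing, and the step you call the main obstacle is the whole content of the ``if'' direction. Your framework also points the wrong way:
\begin{itemize}
\item For $s=5,6$ no block has $D$ an initial segment, since there is no perfect Golomb ruler with more than four marks.
\item Once the small part is fixed, $[d,N]$ splits into triples $\{x,y,x+y\}$ only for $m=(N-d+1)/3$ in two of the four classes mod $4$, because the sum must be even.
\item Hooked Langford sequences do not fill the other classes, since their triples cover $[d,N]\setminus\{N-1\}$ rather than $[d,N]$.
\item Moving the special block's differences up near $N$ only punches holes in the interval, which is a harder problem.
\end{itemize}

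The missing idea is to confine all irregularity to a small seed and control the residue class of $n$ through the choice of seed. By Construction \ref{6m+u} (also from \cite{WuD}), a $(u,\{3,s^{*}\},1)$-GDF (or the $(13,4,1)$-GDF $\{0,1,4,6\}$) together with a Langford sequence of order $m$ and defect $(u+1)/2$ yields a $(6m+u,\{3,s^{*}\},1)$-GDF. The new triples cover exactly $[\frac{u+1}{2},\frac{u-1}{2}+3m]$. For example:
\begin{itemize}
\item $\{0,1,4,6\}$ gives all $n\equiv 13,19\pmod{24}$ with $n\geq 91$.
\item The $(25,\{3,4^{*}\},1)$-GDF $\{0,1,3,10\},\{0,4,12\},\{0,5,11\}$ gives $n\equiv 1,7\pmod{24}$ with $n\geq 175$.
\item The $(33,\{3,5^{*}\},1)$-GDF $\{0,3,5,11,15\},\{0,1,14\},\{0,7,16\}$ gives $n\equiv 9,15\pmod{24}$ with $n\geq 249$.
\end{itemize}
For $s=6$, two seeds, one with $u\equiv 1,7\pmod{24}$ and one with $u\equiv 13,19\pmod{24}$, do the same. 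After that, only a finite list of small $n$ remains to be settled directly. Those small cases and the $s=6$ seeds are what your proof still has to supply.
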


When $m>1$,  $(n\times m,k,1)$-GDFs were first introduced by Wang et al. \cite{WLD}, which was motivated by the application in constructions of geometric orthogonal codes. They gave a complete solution to the existence of an $(n\times m,3,1)$-GDF. Since  $(n\times m,K,1)$-GDFs can be used to construct variable-weight geometric orthogonal codes, Su et al. \cite{Su} continue  to discuss the existence of $(n\times m,K,1)$-GDFs with $|K|\geq2$. They proved the existence of an $(n\times m, \{3,4\}, 1)$-GDF and an $(n\times m, \{3,4,5\}, 1)$-GDF except possibly for a few values.

\begin{lemma}{\rm(Wang et al. \cite{WLD})}\label{14}
There exists an $(n\times m, 3,1)$-GDF if and only if $nm\equiv 1\pmod{6}$ and $n,m\equiv 1,7,17,23 \pmod{24}$.
\end{lemma}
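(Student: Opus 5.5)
We need both directions of Lemma \ref{14}: the conditions are necessary, and a GDF exists whenever they hold.

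\textbf{Necessity.} We count differences. An $(n\times m,3,1)$-GDF with $b$ base blocks produces $6b$ differences, and these cover each nonzero element of $[n]\times[m]$ exactly once. Hence $nm-1=6b$, that is, $nm\equiv1\pmod 6$.

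For the residues modulo $24$, we project onto one coordinate. Differences with second coordinate $0$ come from pairs of points in the same row. Consider the differences in the row $\{(x,0): x\in[n]\setminus\{0\}\}$. A base block meets a row in $0$, $1$, $2$ or $3$ points. A block with all three points in one row gives a triple $\{a,b,a+b\}$ of first-coordinate differences. A block with exactly two points in a row gives a single pair $\pm a$.

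We then combine several parity-type invariants:
\begin{enumerate}
\item the count of differences lying on the horizontal axis;
\item the count on the vertical axis;
\item the sum of the first coordinates over positive differences, computed modulo small powers of $2$ and modulo $3$, in the style of the classical Skolem/Langford-type arguments used for Lemma \ref{n,3,1}(1).
\end{enumerate}
For $m=1$, these arguments recover $n\equiv1,7\pmod{24}$. In general we expect the obstruction $n\equiv1,7,17,23\pmod{24}$ to come from the same kind of computation: the sum $\sum a^2$ or $\sum a$ over the axis differences, taken modulo $8$ and modulo $3$, together with the fact that each full-row triple contributes $a+b-(a+b)=0$ to a signed sum. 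By the symmetry $(n\times m)\leftrightarrow(m\times n)$, it suffices to derive the condition on $n$; the condition on $m$ then follows.

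\textbf{Sufficiency.} We would proceed recursively. The base ingredients are the one-dimensional $(n,3,1)$-GDFs of Lemma \ref{n,3,1}(1), which exist for $n\equiv1,7\pmod{24}$. The key product-type construction takes an $(n,3,1)$-GDF and an $(m,3,1)$-GDF and builds an $(n\times m,3,1)$-GDF. Its blocks are:
\begin{itemize}
\item $\{(a,0),(b,0),(c,0)\}$ on the horizontal axis;
\item $\{(0,a),(0,b),(0,c)\}$ on the vertical axis;
\item blocks covering the mixed differences $(x,y)$ with $x,y\neq0$, obtained by lifting a one-dimensional GDF or a Skolem-type partition of $[1,(n-1)/2]\times[m]$ into difference triples.
\end{itemize}
The residues $17,23\pmod{24}$ cannot be handled by this construction, since no one-dimensional family exists for them. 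For these we would use direct constructions for small cases, for example $17\times 7$ and $23\times 1$-type starters, found by computer. Recursive constructions of the form $n\to n+24$, obtained by adding a frame of differences with $|x|\in[(n+1)/2,(n+23)/2]$ and covering it by explicit families of triples, would then extend these to all admissible $n,m$.

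\textbf{Main obstacle.} We expect the hardest step to be covering the mixed region $\{(x,y): x\neq0,\ y\neq0\}$ uniformly while respecting the mod-$24$ classes. In particular, the cases where one side is $\equiv17$ or $23\pmod{24}$ require genuinely two-dimensional triples, since those residues have no one-dimensional GDF to lift. Our first attempt would be to partition this region into $2\times 2$ or $24\times 24$ tiles, each covered by a fixed pattern of difference triples.
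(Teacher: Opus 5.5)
The paper does not prove this lemma; it quotes it from Wang et al. So I compare your proposal with what a complete argument needs.

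\textbf{Necessity.} This part is not yet a proof. The mod-$8$ condition is left as something you ``expect'', and most of the invariants you list are the wrong ones. Differences on the horizontal axis carry no parity constraint, because a block with exactly two points in a row contributes a single arbitrary $\pm a$ there. Neither $\sum a^2$ nor any argument modulo $3$ is needed. The invariant that works is your third item, applied to \emph{all} differences, not only axis ones. If a block has first coordinates $a_1\le a_2\le a_3$, the differences with positive first coordinate have first coordinates summing to $2(a_3-a_1)$, which is even. In a GDF, each $(x,y)$ with $1\le x\le\frac{n-1}{2}$ and $y\in[m]$ occurs exactly once, so the total is $m(n^2-1)/8$. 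Since $m$ is odd, this forces $n^2\equiv1\pmod{16}$, i.e.\ $n\equiv\pm1\pmod 8$. Together with $3\nmid n$ (from $nm\equiv1\pmod 6$), this is exactly $n\equiv1,7,17,23\pmod{24}$. It is the argument of Lemma~\ref{non} with $x_5=0$.

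\textbf{Sufficiency.} For $n,m\equiv1,7\pmod{24}$ your product idea works once made precise. Lift each block of an $(n,3,1)$-GDF through a PDM$(3,m)$ (Lemma~\ref{345PDM}(1)); this covers all of $\{x\neq0\}\times[m]$, horizontal axis included. Then place an $(m,3,1)$-GDF on $\{0\}\times[m]$.

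The genuine gap is the class $n,m\equiv17,23\pmod{24}$; since $nm\equiv1\pmod6$, both sides lie in it together. The base cases you suggest cannot exist: $17\cdot7\equiv5\pmod6$, and $23-1$ is not divisible by $6$.

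Worse, the $n\to n+24$ frame recursion cannot work. A block $\{P,Q,R\}$ yields the differences $u=Q-P$, $v=R-Q$ and $u+v$. Suppose all three first coordinates have absolute value in $[d,d+11]$, with $d=\frac{n+1}{2}$. Equal signs give $|x_u+x_v|\ge2d>d+11$, and opposite signs give $|x_u+x_v|\le11<d$, as soon as $n\ge23$. The same additive obstruction defeats covering tiles of the difference region that lie away from the origin.

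The missing idea is to put the hole in the \emph{centre}:
\begin{itemize}
\item Take one-dimensional $(n,3,1)$-GDPs whose leave is a small central interval $[h]$, e.g.\ from Langford sequences via Construction~\ref{LD-GDP}; cf.\ Lemma~\ref{origin}(6).
\item Combine two of them by Construction~\ref{nm-nxm,3,1} into an $(n\times m,3,1)$-GDP with leave $[h_1]\times[h_2]$.
\item Fill this box with a directly constructed $(h_1\times h_2,3,1)$-GDF, $h_1,h_2\in\{17,23\}$, via Construction~\ref{budong}.
\end{itemize}
The finitely many small orders not reached this way need separate packings or direct GDFs.
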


\begin{lemma}{\rm (Su et al. \cite{Su})}\label{result34}
There exists an $(n\times m, \{3,4\},1)$-GDF if and only if $nm\equiv 1\pmod{6}$.
\end{lemma}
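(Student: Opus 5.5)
Necessity is a counting argument. The set $[n]\times[m]\setminus\{(0,0)\}$ has $nm-1$ elements, and they are partitioned by the differences, with a block of size 3 contributing 6 differences and a block of size 4 contributing 12. Hence $nm-1\equiv 0\pmod 6$, so $nm\equiv1\pmod 6$; in particular both $n$ and $m$ are odd.

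For sufficiency I would give direct constructions plus a recursive step, using the symmetry that an $(n\times m,K,1)$-GDF is also an $(m\times n,K,1)$-GDF. Since $nm\equiv1\pmod 6$, either both $n,m\equiv1\pmod 6$ or both $n,m\equiv5\pmod 6$, and I would treat these two cases separately.

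The decomposition I would use is the following. Write $[n]\times[m]$ as the union of the central column $\{0\}\times[m]$ and the pairs of columns $\{\pm x\}\times[m]$ with $x\ge1$. The difference set splits accordingly:
\begin{itemize}
\item differences $(0,y)$ with $y\neq0$, which must come from an $(m,\{3,4\},1)$-GDF;
\item differences $(\pm x,y)$ with $x>0$, which are covered by blocks whose first coordinates are not all equal.
\end{itemize}
For the first part, $m\equiv1\pmod 6$ gives an $(m,\{3,4\},1)$-GDF from Lemma~\ref{n,3,1}(1) or Lemma~\ref{n-345}(1), with small $m$ handled directly. When $m\equiv5\pmod 6$ no 1-dimensional $(m,\{3,4\},1)$-GDF exists, because $m-1\equiv4\pmod 6$. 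In that case I would not separate the central column. Instead I would combine the columns $x=0,\pm1$ into a single $(3\times m,\{3,4\},1)$-GDF, or use a small base $(5\times5)$ or $(5\times11)$ seed built by hand or by computer.

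Next comes the recursion. From a $(n_0\times m,\{3,4\},1)$-GDF I would extend to $(n_0+6t)\times m$ by adding $t$ layers of columns $\{\pm(a+1),\dots,\pm(a+3)\}$ times $[m]$. Each layer has $6m$ new differences. I would cover them with triples $\{(0,0),(i,y),(j,y')\}$ chosen so that their differences lie only in the new layer. This is essentially a "Skolem-type" or Langford-type sequence argument in the first coordinate, paired with an arbitrary bijection in the second coordinate.

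I would do this concretely with blocks $\{(0,0),(a_i,c_i),(b_i,d_i)\}$. Here $\{a_i,b_i,b_i-a_i\}$ runs over an extended Skolem/Langford partition of the new first-coordinate values, and $(c_i,d_i)$ run over $[m]\times[m]$-type matchings so that all the second coordinates $c_i,d_i,d_i-c_i$ cover $[m]$ appropriately. The Skolem/Langford existence results are known, and blocks of size 4 absorb the parity exceptions.

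The main obstacle is this layer step. The second-coordinate differences $d_i-c_i$ must also range over all of $[m]$ exactly as required. I would first try the choice $c_i=y$, $d_i=2y$ (mod-free, taken within $[m]$ via signs), which uses the known $(m\times\cdot)$ trick from Wang et al.\ (Lemma~\ref{14}). Residual small cases would be fixed with explicit 4-blocks found by computer.
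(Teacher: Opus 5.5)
Your necessity argument is correct, but the sufficiency plan rests on a layer step that cannot exist. When you add the columns $\pm(a+1),\pm(a+2),\pm(a+3)$ to an $(n_0\times m,\{3,4\},1)$-GDF with $n_0=2a+1$, every difference of a new block must have first coordinate of absolute value in $\{a+1,a+2,a+3\}$, since the seed already uses all the others. So the first coordinates of a new block are distinct, say $x_1<x_2<x_3<\cdots$. Then
$x_3-x_1=(x_2-x_1)+(x_3-x_2)\ge 2a+2>a+3$
as soon as $a\ge 2$. Hence for any seed with $n_0\ge5$ no block of size $3$ or $4$ fits into a band of three new columns, and among admissible seeds only the single step $1\to7$ is possible. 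This additivity is why Langford-type constructions add one \emph{wide} band at once: Construction~\ref{LD-GDP} covers $[d,d+3t-1]$ in one go and needs $t\ge 2d-1$, with $t$ in two residue classes mod~$4$ (Lemma~\ref{LDjieguo}). From a seed $n_0$ you therefore reach only $n=n_0+6t$ with $t\ge n_0$, and only in two of the four classes mod $24$ that are $\equiv n_0\pmod 6$. The missing classes (e.g.\ $n\equiv13,19\pmod{24}$ over a one-dimensional base) and all small pairs $(n,m)$ are where the real work lies. The phrase ``blocks of size 4 absorb the parity exceptions'' does not say how that work is done.

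Two further steps also fail as stated.

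\textbf{Second coordinates.} The choice $(c_i,d_i)=(y,2y)$ gives the differences $2y$. As $y$ runs over $[m]$, these are the even integers in $[-(m-1),m-1]$, not $[m]$. There is no reduction modulo $m$ in $[n]\times[m]$, so ``taken within $[m]$ via signs'' has no meaning. What you need is a PDM$(3,m)$ (Lemma~\ref{345PDM}(1)), which inflates each one-dimensional triple into $m$ triples as in Construction~\ref{N-NxM-GDP}.

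\textbf{The case $n,m\equiv5\pmod 6$.} Your fallback $(3\times m,\{3,4\},1)$-GDF is excluded by your own counting, since $3m-1\equiv2\pmod 6$. The seeds $5\times5$ and $5\times11$ are asserted but not produced, and by the remark above a single seed would not cover all residues anyway.

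The paper simply quotes this lemma from Su et al. The machinery it reproduces in Section~3 shows a workable route:
\begin{itemize}
\item take Langford-based $(n,3,1)$-GDPs with a small leave $[h]$ for each residue class mod $24$ (Lemma~\ref{origin});
\item combine them via Construction~\ref{nm-nxm,3,1}, or inflate them by a PDM$(3,m)$, into an $(n\times m,3,1)$-GDP with a small hole $[h_1]\times[h_2]$;
\item fill that hole with a directly constructed $(h_1\times h_2,\{3,4\},1)$-GDF via Construction~\ref{budong}, and settle finitely many small cases by computer.
\end{itemize}
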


\begin{lemma}{\rm (Su et al. \cite{Su})}\label{result345}
There exists an $(n\times m, \{3,4,5\}, 1)$-GDF if and only if $n, m\equiv1\pmod{2}, n,m \neq 3$ and $\{n,m\}\notin\{\{1,p\},\{5,7\},\{5,9\}\}$, where $p\in \{5,9,11,15,17,21,23,$ $27,29,35,41,47,53\}$, and possibly except for $\{n,m\}\in\{\{5,13\},\{5,45\},\{7,23\},\{7,29\},\{7,35\},$ $ \{9,35\},\{11,19\},\{11,27\}, $  $\{13,17\},\{13,21\},\{13,23\},\{13,27\},\{13,29\},\{13,35\},\{15,17\},\{15, $ $21\},\{15, 27\},\{17,21\},\{17, 27\},$ $\{21, 21\},\{21, 27\},\{23, 45\},\{27, 27\},\{29, 45\},\{35,45\}\}$.
\end{lemma}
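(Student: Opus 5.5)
The plan splits into necessity and sufficiency. For necessity I start from a counting argument. The set $[n]\times[m]$ (which forces $n,m$ odd by the convention on $[n]$; more intrinsically, the symmetry $a\mapsto -a$ pairs off the nonzero elements) contains $nm-1$ nonzero elements. A base block of size $k$ contributes $k(k-1)$ differences, and this number is always even. So $nm-1$ is even, which again gives $n,m\equiv 1\pmod 2$.

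For $n=3$ (equivalently $m=3$) I would refine the count by rows. Let a block meet rows $-1,0,1$ in $a,b,c$ points. Its differences with first coordinate $0$, with first coordinate $\pm1$, and with first coordinate $\pm2$ number $a(a-1)+b(b-1)+c(c-1)$, $2b(a+c)$ and $2ac$ respectively. Summing over all blocks, these totals must equal $m-1$, $2m$ and $2m$. Since every block has size at most $5$, one has $\sum ac=\sum b(a+c)=m$ together with $\sum\bigl(a(a-1)+b(b-1)+c(c-1)\bigr)=m-1$. I expect a short inequality on $(a,b,c)$ with $a+b+c\le 5$ to give a contradiction, by showing that the $x=0$ differences are always too numerous relative to the other two classes. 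The case $m=1$ with $p$ in the listed set, and the sporadic pairs $\{5,7\}$ and $\{5,9\}$, I would settle by exhaustive computer search. For $m=1$ this can be combined with the known nonexistence results quoted in Lemma~\ref{n,3,1} and Lemma~\ref{n-345}.

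For sufficiency I first reduce by residues mod $6$. Blocks of sizes $3$ and $4$ contribute $6$ and $12$ differences, so when $nm\equiv1\pmod 6$ Lemma~\ref{result34} already gives the result. When $nm\equiv 3$ or $5\pmod 6$, I need $t$ blocks of size $5$ with $20t\equiv nm-1\pmod 6$, that is, $t\equiv 1$ or $2\pmod 3$ respectively. The case $m=1$ follows from Lemma~\ref{n-345}(2) when $n\equiv 9,15\pmod{24}$, and otherwise from direct constructions of $(n,\{3,4,5\},1)$-GDFs with one or two $5$-blocks.

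For $m>1$ I would use a recursive ``row-pair'' construction. Write $[n]\times[m]$ as the middle row $[n]\times\{0\}$ together with the pairs of rows $\pm y$ for $y=1,\dots,(m-1)/2$. The middle row is handled by an $(n,\{3,4,5\},1)$-GDF (or a GDP whose leave is absorbed later). Each pair $\pm y$ is covered by blocks of the form $\{(0,0),(x_1,y),(x_2,-y')\}$ arranged so that the differences fill $[n]\times\{\pm y\}$. The natural tool here is Skolem/Langford-type sequences in the first coordinate. A few $5$-blocks are placed in small fixed subgrids to fix the residue mod $6$.

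Alongside this I would use a product-type construction: combine an $(n_1,K,1)$-GDF or GDP with an $(n_2\times m,K,1)$-GDF in the standard way to pass from $n_2\times m$ to $n_1n_2\times m$. This reduces everything to finitely many base cases with $n,m$ below some bound, say $55$, which would be found by computer.

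The main obstacle is exactly these small base cases, particularly $n\in\{5,7,9,11,13\}$. The recursion cannot be started there, and the mod-$6$ constraint forces specific numbers of $5$-blocks into a tight grid. Several of these cases I would expect to remain undecided. That matches the list of possible exceptions in the statement, which the search would leave open rather than resolve.
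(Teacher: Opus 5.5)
This lemma is only quoted from Su et al., so the paper gives no proof to compare against. Judged on its own terms, your plan has two genuine gaps.

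\textbf{The case $n=3$.} Your bookkeeping is set up for the wrong grid. In a $(3\times m,\{3,4,5\},1)$-GDF every difference lies in $[3]\times[m]$, so its first coordinate is in $\{-1,0,1\}$. There is no class of differences with first coordinate $\pm2$ to be covered, and in your notation $ac=0$ for every block. The system you wrote down is $\sum ac=\sum b(a+c)=m$ with same-row total $m-1$. That is exactly what a $(5\times m,\{3,4,5\},1)$-GDF satisfies after each block is translated into three consecutive rows. Such GDFs exist, e.g.\ a $(5\times 5,\{3,4\},1)$-GDF by Lemma~\ref{result34}. So no inequality on $(a,b,c)$ can produce the contradiction you expect.

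The correct argument is short. Each block meets two adjacent rows in $p$ and $q$ points with $p+q\ge 3$. The totals are $\sum\bigl(p(p-1)+q(q-1)\bigr)=m-1$ and $\sum 2pq=2m$, and
\[
p(p-1)+q(q-1)-pq=(p-q)^2+(p-1)(q-1)-1\ge 0 \quad\text{whenever } p+q\ge 3 .
\]
Summing gives $m-1\ge m$, a contradiction.

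\textbf{Sufficiency.} The claimed reduction to finitely many base cases is not supported.

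\emph{The product step.} A step producing $n_1n_2\times m$ never reaches a prime parameter unless one factor is $1$. It can combine one-dimensional GDFs for $n$ and for $m$, as in Lemma~\ref{yiweierweiGDF}. But it gives nothing for, say, $5\times m$ or $9\times m$ with $m$ a large prime, because there is no $(5,\{3,4,5\},1)$- or $(9,\{3,4,5\},1)$-GDF to multiply by.

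\emph{The row-pair construction.} As described, it does not fill this hole. A block $\{(0,0),(x_1,y),(x_2,-y')\}$ has differences with second coordinates $\pm y$, $\pm y'$ and $\pm(y+y')$. It therefore spreads over three row pairs and cannot fill $[n]\times\{\pm y\}$ one pair at a time. You never specify the structure that coordinates all row pairs simultaneously.

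\emph{The missing idea.} What is needed is the ``packing with a small hole, then fill the hole'' method of Section~3, used throughout Sections~4--6:
\begin{itemize}
\item take $(m,3,1)$-GDPs with leave $[h]$ for all large $m$ in each residue class modulo $24$ (Lemma~\ref{origin}, from Langford sequences via Construction~\ref{LD-GDP});
\item combine one with a second such GDP (Construction~\ref{nm-nxm,3,1}) or with a PDM$(3,n)$ (Construction~\ref{N-NxM-GDP}), giving an $(n\times m,3,1)$-GDP with leave $[h_1]\times[h_2]$ or $[n]\times[h]$;
\item fill that leave with a small $\{3,4,5\}$-GDF via Construction~\ref{budong}.
\end{itemize}
The $4$- and $5$-blocks needed to match the residue of $nm$ modulo $6$ then live entirely in the small filling GDF. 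The same kind of recursion, e.g.\ Construction~\ref{6m+u}, is needed for your unspecified ``direct constructions'' in the case $m=1$. Only with this machinery does the problem actually reduce to a finite list of small GDFs to be found by computer.
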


In this paper, we mainly study  the existence of $(n\times m,K,1)$-GDFs  for $K=\{3,5\}$ and  $\{3,6\}$. First, we    consider the case of $m=1$, i.e., the existence of $(n, K,1)$-GDFs. Combining the necessary conditions for the existence of  $(n\times m,K,1)$-GDFs with several recursive constructions, we classify the parameters $n$ and $m$ and reduce the original existence problem to that of a minimal set of small-parameter designs. These small-parameter designs are obtained via computer search, and their existence is central to this work. As a further work, we  establish the existence of  the remaining parameters for $K=\{3,4,5\}$ in Lemma \ref{result345}. The main results of the paper are the following theorems.

\begin{theorem}\label{result35}
There  exists an $(n\times m, \{3,5\}, 1)$-GDF if and only if $n, m\equiv1,7\pmod{8}$, $\{n,m\}\neq\{1,p\}$, where~$p\in \{9,15,17,23,41,47\}$.
\end{theorem}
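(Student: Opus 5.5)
Necessity comes first. For an $(n\times m,\{3,5\},1)$-GDF with $a$ triples and $b$ quintuples, every nonzero element of $[n]\times[m]$ is covered exactly once, so $nm-1=6a+20b$. In particular $nm$ is odd and $nm\equiv 1\pmod 2$. To get the mod-$8$ condition on each coordinate separately, we project onto a single coordinate. Consider the differences $(x,y)$ with $x=0$, i.e.\ those arising from pairs of points in the same column. Also consider the parity structure: in each block, count the pairs whose first coordinates have distinct parity. A block of size $3$ whose points have $r$ odd first coordinates contributes $r(3-r)\in\{0,2\}$ such pairs, i.e.\ $0$ or $4$ ordered differences with odd first coordinate. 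A block of size $5$ contributes $r(5-r)\in\{0,4,6\}$ pairs, i.e.\ $0$, $8$ or $12$ ordered differences. The number of elements of $[n]\times[m]$ with odd first coordinate is $m\cdot 2\lfloor (n+1)/4\rfloor$. This number must therefore be a sum of $4$'s and multiples of $4$, and a finer count mod $8$ (the quintuples with $r(5-r)=6$ contribute $12$) is needed. I would combine this with the same argument on the second coordinate, and with the $x=0$ column count $m-1 = $ number of differences lying in a column. Together with the known restriction for $m=1$ (the $(n,\{3,5\},1)$ case, where the classical parity argument for cyclic difference families gives $n\equiv1,7\pmod 8$), this should force $n,m\equiv1,7\pmod 8$. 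This parity bookkeeping is what I would check carefully first; if it does not close, I would refine it by working modulo $4$ on the coordinates. The excluded pairs $\{1,p\}$ with $p\in\{9,15,17,23,41,47\}$ are $1$-dimensional cases. I would dispose of them by exhaustive computer search (small $n$), or by counting: for $n=9$, $nm-1=8$ is not of the form $6a+20b$ with $a>0$ or $b>0$ suitably, and for $15,17,23$ similar small cases are excluded.

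For sufficiency I would first settle $m=1$, i.e.\ $(n,\{3,5\},1)$-GDFs for $n\equiv1,7\pmod 8$. When $n\equiv1,7\pmod{24}$, Lemma \ref{n,3,1}(1) already gives an $(n,3,1)$-GDF. For $n\equiv 9,15\pmod{24}$ with $n\ge33$, Lemma \ref{n-345}(2) gives an $(n,\{3,5^*\},1)$-GDF. The residues $n\equiv 17,23\pmod{24}$ remain. Here $nm-1\equiv 16,22\pmod{24}$ requires $20b\equiv 16,22 \pmod 6$, i.e.\ $b\equiv 2\pmod 3$. I would construct these via direct computer search for small $n$ (beyond the exceptions $17,23,41,47$), together with a Skolem-type/Langford-sequence construction. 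In that construction, two quintuples absorb a chosen set of small differences, and the remaining differences $1,\dots,(n-1)/2$ are partitioned into triples $\{a,b,a+b\}$ (or $a+b=c$ modulo the symmetric interval) using known extended/hooked Langford sequences.

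For $m>1$, I would use recursive product constructions in the style of Wang et al.\ and Su et al. Given an $(n,\{3,5\},1)$-GDF and an $(m,\{3,5\},1)$-GDF, or more generally a suitable GDF on each factor together with a "filling" structure, one builds an $(n\times m,\{3,5\},1)$-GDF. The natural tool is this: take a GDF on $[n]$ and inflate each base block $B$ by $[m]$, using a $(k\times m)$-type transversal structure, i.e.\ an idempotent-like difference matrix over $[m]$ with $k\in\{3,5\}$ rows. This covers all differences $(x,y)$ with $x\neq0$. Then take an $(m,\{3,5\},1)$-GDF placed in the column $x=0$ to cover the differences $(0,y)$. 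The existence of the needed difference matrices of size $3$ and $5$ over $\mathbb{Z}$-intervals of odd length (for $m$ coprime to small primes, cyclic ones exist; otherwise one uses the symmetric-interval version from Su et al.) is the key ingredient. When one factor is an exception $p$, I would instead use an $(n,\{3,5\},1)$-GDP with a small leave, combined with a small directly constructed $(n_0\times m_0)$ ingredient.

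The main obstacle is the finite list of small $(n,m)$ pairs, especially those involving $p\in\{9,15,17,23,41,47\}$ as one coordinate. The product construction fails there, and direct computer constructions (as the paper indicates) will be needed. I would reduce to $n,m<$ some bound such as $72$ via the recursion and then search directly.
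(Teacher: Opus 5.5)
Your necessity count is correct and already complete; no mod-$8$ refinement or second-coordinate argument is needed. A block of size $k$ with $r$ points of odd first coordinate yields $2r(k-r)$ differences with odd first coordinate. That is $0$ or $4$ for $k=3$, and $0$, $8$ or $12$ for $k=5$. Meanwhile $[n]\times[m]$ contains $2m\lfloor (n+1)/4\rfloor$ such elements. Since $nm$ is odd, divisibility by $4$ forces $\lfloor (n+1)/4\rfloor$ to be even, i.e. $n\equiv 1,7\pmod 8$, and symmetrically for $m$. This is a clean alternative to Lemma \ref{non}, where the paper instead sums the first coordinates of the differences in the right half-plane (each triple gives $2(a_3-a_1)$, each quintuple $2(2a_5+a_4-a_2-2a_1)$) and compares the parity with $m(n^2-1)/8$.

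The exceptional pairs are not all excluded by counting, though. The equation $n-1=6x+20y$ is unsolvable only for $p\in\{9,15,17,23\}$. For $p=41$ it forces $(x,y)=(0,2)$, and for $p=47$ it forces $(x,y)=(1,2)$. These two cases need the nonexistence of a $(41,5,1)$-GDF (Lemma \ref{n,3,1}(4)) and of a $(47,\{3,4,5\},1)$-GDF (Lemma \ref{result345}), or an actual exhaustive search.

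The genuine gap is the five-row ingredient in your product step. Inflating a quintuple of a GDF on $[n]$ by $[m]$ via Construction \ref{N-NxM-GDP} needs an $L$-SPGDD of type $m^5$: the differences between any two rows must cover $[m]$ exactly \emph{as integers}. A difference matrix over $\mathbb{Z}_m$ controls differences only modulo $m$, so its integer differences need not lie in $[m]$. With blocks of size $5$ only, the required object is a PDM$(5,m)$, which is known just for $m=5^{a_1}121^{a_2}161^{a_3}201^{a_4}$ (Lemma \ref{345PDM}(3)). There is no general symmetric-interval version you can invoke. Nor can quintuples be avoided: for $n=m=33$, the GDF on $[33]$ must contain one, since $32=6x+20y$ forces $y=1$.

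The missing idea is the paper's Lemma \ref{75SPGDD}, which allows triples in the five-row object. From an $(m,\{3,5\},1)$-GDF, Construction \ref{MGDD-SPMGDD} with a $3$-MGDD of type $3^5$ and a $5$-MGDD of type $5^5$ (Lemma \ref{MGDD}) gives a $\{3,5\}$-SPMGDD of type $m^5$. Adding the block $A\times\{0\}$ turns this into a $\{3,5\}$-SPGDD of type $m^5$. Together with PDM$(3,m)$ and Construction \ref{budong}, this is exactly Lemma \ref{yiweierweiGDF}.

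The same issue affects your plan for a factor $p\in\{9,15,17,23,41,47\}$. There the paper uses $(n,3,1)$-GDPs with leave $[h]^r$, which need only PDM$(3,\cdot)$. To inflate the $(m,\{3,5\},1)$-GDPs with leave $[9]^3$ for $m\in\{41,47\}$, it needs five-row objects of type $7^5$ (from Lemma \ref{75SPGDD}) and of types $9^5$ and $15^5$. The latter two are given explicitly in Examples \ref{9SPGDD} and \ref{15SPGDD}, because no $(9,\{3,5\},1)$- or $(15,\{3,5\},1)$-GDF exists. It also uses computer-found small $(h_1\times h_2,\{3,5\},1)$-GDFs.

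Finally, for $n\equiv 17,23\pmod{24}$, two quintuples alone cannot absorb an initial segment of differences, since that would be a $(41,5,1)$-GDF. The paper instead plugs a $(65,\{3,5\},1)$-GDF into Construction \ref{6m+u} with Langford sequences of defect $33$. This only reaches $n\ge 473$ (for $n\equiv 17$) and $n\ge 455$ (for $n\equiv 23$); everything below is constructed directly.
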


\begin{theorem}\label{result36}
There  exists an $(n\times m, \{3,6\}, 1)$-GDF if and only if $nm\equiv1\pmod{6}$, $\{n,m\}\notin \{\{1,p\},\{5,5\}\}$, where~$p\in \{13,19,37\}$.
\end{theorem}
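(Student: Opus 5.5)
Theorem~\ref{result36} will be proved by first deriving the necessary conditions, then establishing sufficiency through recursion on top of a finite list of direct constructions.

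\emph{Necessity.} An $(n\times m,\{3,6\},1)$-GDF has $a$ triples and $b$ hexads covering the $nm-1$ nonzero elements of $[n]\times[m]$. Each triple contributes $6$ differences and each hexad contributes $30$, so $nm-1=6a+30b$, which forces $nm\equiv1\pmod 6$. The exceptions are ruled out separately.
\begin{itemize}
\item For $m=1$, an $(n,\{3,6\},1)$-GDF either has no hexads, in which case $n\equiv1,7\pmod{24}$ by Lemma~\ref{n,3,1}(1), or it contains hexads. For $n=13,19$ a hexad already covers most of the differences, so an exhaustive check (by hand or computer) rules these out. The case $n=37$ is also settled by computer search.
\item For $\{5,5\}$ we have $24=6a+30b$, so $b=0$ and $a=4$. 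This is impossible by Lemma~\ref{14}, since $5\not\equiv1,7,17,23\pmod{24}$.
\end{itemize}

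\emph{Sufficiency for $m=1$.} The condition is $n\equiv1\pmod6$ with $n\notin\{13,19,37\}$.
\begin{itemize}
\item For $n\equiv1,7\pmod{24}$, Lemma~\ref{n,3,1}(1) gives a $(n,3,1)$-GDF, and this is an admissible $\{3,6\}$ design provided that GDFs with only one block size are allowed.
\item For $n\ge43$, Lemma~\ref{n-345}(3) supplies an $(n,\{3,6^*\},1)$-GDF.
\end{itemize}
The remaining values are $n\equiv13,19\pmod{24}$ with $n<43$, namely $n=13,19,37$, which are exactly the excluded ones. So the case $m=1$ is immediate.

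\emph{Sufficiency for $n,m>1$.} Here $n,m$ are odd, not divisible by $3$, and $nm\equiv1\pmod6$. I would use a product (inflation) construction. Given an $(n,\{3,6\},1)$-GDF or GDP on the first coordinate and a suitable ``frame'' structure on $[m]$, the differences split into two parts. Those with second coordinate $0$ are handled by the one-dimensional family. Those with nonzero second coordinate are handled by lifting each base block $\{x_i\}$ to blocks $\{(x_i,\,c\,y_i)\}$, using a transversal-type or difference-matrix argument over $[m]$, for instance by writing $[m]\setminus\{0\}$ as $\pm$ pairs and using an $(m,3,1)$-type partition of half-differences.

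A cleaner route is to mimic how Su et al.\ handled $\{3,4\}$:
\begin{enumerate}
\item Take an $(n\times m,\{3,4\},1)$-style construction from Lemma~\ref{result34}.
\item Reduce to the case where $n$ and $m$ lie in a few residue classes modulo $24$ (or $12$).
\item Apply recursive constructions of the form ``$(n\times m)$ plus $(n\times m')$ gives $(n\times(m+m'+\dots))$'', i.e.\ adding columns via a GDP whose leave is exactly a strip $[n]\times\{0\}$ that can then be filled by an $(n,\{3,6\},1)$-GDF.
\end{enumerate}
This reduces everything to small parameters, say $n,m\le 49$. Those cases, such as $5\times m$ and $7\times m$ for small $m$ and the pairs involving $13,19,37$, would be found by computer search. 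Where possible, Lemma~\ref{14} is used directly whenever $n,m\equiv1,7,17,23\pmod{24}$.

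\emph{Main obstacle.} The hardest part is designing the recursive ``column-extension'' GDP with a prescribed leave that works uniformly for all residues, and, in particular, covering the small cases where one side is $5$, $13$, $19$ or $37$. For those cases the one-dimensional ingredient does not exist, so each needs a direct two-dimensional construction. I would first try to find $5\times m$ designs for $m\in\{7,11,13,17,19,23,\dots\}$ by computer and then extend them by adding $5\times 24$ strips.
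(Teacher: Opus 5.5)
Your necessity argument and your treatment of $m=1$ essentially match the paper. The sufficiency part for $n,m>1$, however, has a genuine gap: it is a plan rather than a proof, and its central device cannot work on half of the admissible parameters. You propose a GDP on $[n]\times[m]$ whose leave is exactly the strip $[n]\times\{0\}$, to be filled by an $(n,\{3,6\},1)$-GDF. Such a GDP must cover the $n(m-1)$ differences with nonzero second coordinate, using blocks that contribute $6$ or $30$ differences each. When $n,m\equiv5\pmod 6$ (residues $5,11,17,23$ modulo $24$, which are admissible since $nm\equiv1\pmod 6$), we get $n(m-1)\equiv 2\pmod 6$. Moreover, no $(n,\{3,6\},1)$-GDF exists at all for $n\equiv 5\pmod 6$. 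So the obstruction is not confined to the sides $5,13,19,37$; the strip recursion fails for that whole class. Beyond this, the ``column-extension'' construction, the reduction to $n,m\le 49$, and the ``$5\times 24$ strips'' are never specified. Borrowing from Lemma~\ref{result34} does not help either, since those families use $4$-blocks.

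The missing idea is that hexads are only needed in a small corner, and the paper builds everything else from triples. Take an $(n,3,1)$-GDP and an $(m,3,1)$-GDP whose leaves are centered intervals $[h_1]^{r_1}$ and $[h_2]^{r_2}$. Lemma~\ref{origin} supplies these with $h=7$ for residues $1,7$, $h=13$ for $13,19$, $h=5$ or $11$ for $5,11$, and $h=17$ for $17,23$ modulo $24$, supplemented by a few small GDPs with leave $[h]^2$ or $[h]^3$. Combine them via Construction~\ref{nm-nxm,3,1} into an $(n\times m,3,1)$-GDP with leave $[h_1]^{r_1}\times[h_2]^{r_2}$. Then fill that leave via Construction~\ref{budong} with a small directly constructed GDF, such as a $(7\times 13,\{3,6\},1)$-, $(13\times 13,\{3,6\},1)$-, $(11\times 11,\{3,6\},1)$-, $(5\times 11,\{3,6\},1)$-, $(5\times 17,\{3,6\},1)$- or $(5\times 23,\{3,6\},1)$-GDF. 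When one side is a value like $19$ or $37$ with no suitable one-dimensional GDP, Construction~\ref{N-NxM-GDP} with a PDM$(3,m)$ gives leave $[h]\times[m]$ instead, filled for example by a $(7\times 19,\{3,6\},1)$- or $(13\times 37,\{3,6\},1)$-GDF. This is what reduces the theorem to finitely many small two-dimensional designs.

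Two smaller points on necessity. For $n=13,19$ no exhaustive check is needed, and your description is inaccurate: $n-1<30$ forces $b=0$, so no hexad can occur at all, and Lemma~\ref{n,3,1}(1) finishes the case. For $n=37$ no computer search is needed either: $36=6a+30b$ gives $(a,b)\in\{(6,0),(1,1)\}$. The first is excluded by Lemma~\ref{n,3,1}(1), and the second is exactly a $(37,\{3,6^*\},1)$-GDF, excluded by Lemma~\ref{n-345}(3).
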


\begin{theorem}\label{xinresult345}
There  exists an $(n\times m, \{3,4,5\}, 1)$-GDF if and only if $n, m\equiv1\pmod{2},$ $ n,m\neq 3$ and $\{n,m\}\notin\{\{1,p\},\{5,7\},\{5,9\},\{5,13\}\}$, where $p\in \{5,9,11,15,17,21,23,27,29,$ $35,41,47,53\}$.
\end{theorem}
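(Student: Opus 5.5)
Theorem \ref{xinresult345} differs from Lemma \ref{result345} only in the treatment of the possible exceptions listed there. So the plan is to take the necessity part and all the positive existence results directly from Lemma \ref{result345}, and then settle each of the $25$ undecided pairs $\{n,m\}$ in that list. The target is that $\{5,13\}$ is a genuine nonexistence case, while the remaining $24$ pairs admit an $(n\times m,\{3,4,5\},1)$-GDF.

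For nonexistence at $\{5,13\}$, I would use a counting argument refined by position analysis in $[5]\times[13]$. The number of nonzero differences is $5\cdot13-1=64$. If $b_k$ denotes the number of base blocks of size $k$, then $6b_3+12b_4+20b_5=64$.
\begin{itemize}
\item The differences $(x,y)$ with $|x|=2$ force constraints: they must arise from pairs whose first coordinates are $-2$ and $2$, or $-1$ and $1$ with a shift, and so on.
\item Likewise the extreme differences $(\pm4,\cdot)$, $(\cdot,\pm12)$ and $(\pm4,\pm12)$ can only come from pairs of points at opposite corners or edges of the grid.
\end{itemize}
I would pin down the blocks containing the corner differences $(\pm4,\pm12)$ first. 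Then I would show that, with the few admissible combinations $(b_3,b_4,b_5)$, the boundary differences cannot all be covered without repetition. If the hand argument becomes unwieldy, I would fall back on an exhaustive computer search. This search is tractable: after fixing a translate of each block, each block can be normalized to contain $(0,0)$ or be symmetric, and the search tree is small because of the tight boundary constraints.

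For the $24$ remaining pairs I would use two sources.
\begin{itemize}
\item \textbf{Recursive constructions.} I would use the product-type constructions already used by Su et al., which combine an $(n\times m_1,\cdot)$-GDF with an $(n,\cdot)$-GDP/GDF via filling. I would also use the newly established Theorems \ref{result35} and \ref{result36}: since $\{3,5\}$ and $\{3,6\}$-GDFs are not directly $\{3,4,5\}$, only the $\{3,5\}$ families are directly usable, e.g.\ for $n,m\equiv1,7\pmod 8$, which covers pairs such as $\{7,23\}$, $\{17,23\}$-type values.
\item \textbf{Direct constructions.} For pairs not reached recursively (e.g.\ $\{5,45\}$, $\{13,21\}$, $\{21,21\}$, $\{27,27\}$), I would give explicit base blocks found by computer. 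I would use symmetric ansatzes, such as blocks closed under $(x,y)\mapsto(-x,-y)$ up to translation, to shrink the search.
\end{itemize}
The main obstacle is the larger cases like $\{35,45\}$ and $\{29,45\}$, where direct search is expensive. There I would try a ``frame'' construction: write $[35]\times[45]$ as $[5]\times[45]$ plus shells, and use a holey GDP covering differences with $|x|\ge3$ together with an existing $(5\times45)$-type ingredient, once $\{5,45\}$ is settled, for the inner part.

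Combining Lemma \ref{result345}, the nonexistence of the $(5\times13,\{3,4,5\},1)$-GDF, and the constructions for the other $24$ pairs yields the theorem.
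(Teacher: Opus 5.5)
Your decomposition is the paper's. Lemma~\ref{result345} reduces the problem to the $25$ undecided pairs. The pair $\{5,13\}$ is excluded by counting plus an exhaustive search (Lemma~\ref{non345}). The other $24$ pairs come from Theorem~\ref{result35}, from filling constructions, and from computer-found GDFs (Lemma~\ref{on345}). Theorem~\ref{result35} covers $\{7,23\}$, and it also covers $\{15,17\}$, since $15\equiv7$ and $17\equiv1\pmod 8$. However, two parts of your sketch would not go through as written.

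\textbf{The $\{5,13\}$ analysis uses the wrong range.} In a $(5\times13,\{3,4,5\},1)$-GDF the differences are exactly $[5]\times[13]\setminus\{(0,0)\}$; this is what your count $64$ encodes. So no difference has $|x|\ge3$ or $|y|\ge7$. There are no differences $(\pm4,\cdot)$ or $(\cdot,\pm12)$, and a difference $(2,y)$ cannot come from rows $-2$ and $2$. You are treating $[5]\times[13]$ as the grid containing the blocks, which is the GOC picture, where differences would range over $[9]\times[25]$. The correct normalization is that every block, after translation, lies in $\{0,1,2\}\times[0,6]$. In that box the paper argues by rows:
\begin{itemize}
\item The equation $6b_3+12b_4+20b_5=64$ has only the solutions $(4,0,2)$, $(2,1,2)$ and $(0,2,2)$.
\item The solution $(4,0,2)$ is a $\{3,5\}$-GDF, excluded by the parity argument of Lemma~\ref{non} since $5\equiv5\pmod8$.
\item For $(0,2,2)$, only six differences $(0,y)$ with $y>0$ exist. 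Since blocks occupy at most three rows, each $4$-block has at least one same-row pair and each $5$-block at least two. So all four blocks sit at this minimum, and then they yield at least $3+3+4+4=14>13$ differences $(1,y)$.
\item For $(2,1,2)$, the row pattern is forced to be unique, and that case is killed by exhaustive search.
\end{itemize}
Your fallback search is fine once it is run on the $3\times7$ box. The boundary hand-argument you outline cannot work.

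\textbf{Your frame for the $45$-cases fails for $\{23,45\}$ and $\{29,45\}$.} The problem arises if the hole is the contiguous $[5]\times[45]$ and the holey GDP is obtained, as usual, by blowing up a one-dimensional triple packing with a PDM$(3,45)$.
\begin{itemize}
\item A $(29,3,1)$-GDP with leave $[5]$ would split $\{3,\dots,14\}$ into four triples $\{u,v,u+v\}$. The four largest entries would then have to sum to $102/2=51>14+13+12+11$.
\item A $(23,3,1)$-GDP with leave $[5]$ would require splitting $\{3,\dots,11\}$, whose sum $63$ is odd, into such triples.
\item The same parity obstruction, $3+\dots+8=33$, rules out a $(17,3,1)$-GDP with leave $[5]$.
\end{itemize}
The missing idea is a \emph{dilated} hole. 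Construction~\ref{budong} fills a leave $[h]^{r}\times[45]$ with an $(h\times45)$-GDF scaled by $r$ in the first coordinate.

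The paper takes $(23,3,1)$-, $(29,3,1)$- and $(35,3,1)$-GDPs with leaves $[5]^2$, $[5]^3$, $[5]^3$ from Appendix~D. It applies Construction~\ref{N-NxM-GDP} with a PDM$(3,45)$ and then fills with the computer-found $(5\times45,\{3,4,5\},1)$-GDF. In the same way, a $(17,3,1)$-GDP with leave $[5]^2$ together with the $(5\times k,\{3,4,5\},1)$-GDFs of Su et al.\ for $k\in\{15,21,27\}$ gives $\{15,17\}$, $\{17,21\}$ and $\{17,27\}$.

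Your contiguous version does work for $\{35,45\}$, because Lemma~\ref{origin}(3) supplies a $(35,3,1)$-GDP with leave $[5]$. The remaining $17$ pairs are, as you expected, direct computer constructions.
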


The rest of the paper is organized as follows.  Section 2  introduces  some auxiliary designs.   Section 3  gives several recursive constructions for  geometric difference packings. The existence of an $(n\times m,\{3,5\},1)$-GDF and an $(n\times m,\{3,6\},1)$-GDF is completely proved in Section 4 and Section 5 respectively. Section 6  presents the existence of all the remaining parameters of an $(n\times m,\{3,4,5\},1)$-GDF. In Section 7, we consider the application of  geometric difference families to variable-weight geometric orthogonal codes. Conclusions and discussions are given in Section 8.

\section{Auxiliary designs}
In this section, we introduce some auxiliary designs, which will play important roles in the
recursive constructions for  geometric difference families.

\subsection{Semi-perfect group divisible designs}
Let $K$ be a set of positive integers. A \emph{group divisible design}, denoted by $K$-GDD, is a triple
$(X, \mathcal{G}, \mathcal{B})$ where $X$ is a finite set of  points, $\mathcal{G}$ is a partition of $X$ into  subsets (called \emph{groups}),   $\mathcal{B}$ is a collection of subsets (called \emph{blocks}) of $X$ such that  each
block has size from  $K$ and every 2-subset of $X$ occurs either in some group or in exactly one block, but not both. The
multi-set $T = \{|G| : G \in \mathcal{G}\}$ is called the \emph{type} of the $K$-GDD. If $\mathcal{G}$ contains $u_i$ groups of size $g_i$ for $1\leq i\leq r$, then we also denote the type by $g_1^{u_1} g_2^{u_2}\dots g_r^{u_r}$.  If $K=\{k\}$, we write a $\{k\}$-GDD as a $k$-GDD.

Let $S$ be a set of $n$ points, $m$ an odd integer and $X=S\times[m]$, $\mathcal{G}=\{\{i\}\times [m]$: $i\in S\}$.
Let $\mathcal{F}$ be a collection of subsets (called \emph{base blocks}) of $S\times[m]$. For $i,j\in S$ and $B\in{\cal F}$, define a multi-set $\Delta_{ij}(B)=\{x-y: (i,x),(j,y)\in B, (i,x)\neq(j,y)\}$, and a multi-set $\Delta_{ij}({\cal F})=\bigcup_{B\in{\cal F}}\Delta_{ij}(B)$. If for any $i,j\in S$,
$$\Delta_{ij}({\cal F})=\left\{
\begin{array}{lll}
[m], & \ \ i\neq j;\\
\emptyset, & \ \ i=j,\\
\end{array}
\right.
$$
then a $K$-GDD of type $m^n$ with the point set $X'=S\times [0,m-1]$ together with the group set ${\cal {G}}' =\{\{i\} \times [0,m-1]: i\in S\}$ can be generated from ${\cal F}$, where $K=\{|B|:B\in{\cal F}\}$. The required blocks are
obtained by developing all base blocks of ${\cal F}$ by successively
adding $1$ to the second component of each point of these base
blocks modulo $m$. We call $\mathcal{F}$ a \emph{semi-perfect group divisible design}, denoted by $K$-SPGDD of type $m^n$.
As usual, a $\{k\}$-SPGDD will be abbreviated to a $k$-SPGDD.

\begin{example}\label{9SPGDD}
All base blocks of a $3$-SPGDD of type $9^5$ on $[0,4]\times [9]$ are listed below.\vspace{0.2cm}

\hspace{0.5cm}$\begin{array}{lll}
\{(0, 0), (1, 4), (2, 0)\},~~~ & \{(0, 1), (1, 4), (3, 0)\},~~~ & \{(0, 2), (1, 4), (4, 0)\},\\
\{(0, 3), (1, 3), (4, 0)\}, & \{(0, 0), (2, 4), (4, 0)\}, & \{(0, 0), (1, 1), (3, 4)\},\\
\{(0, 0), (2, 1), (4, 4)\}, & \{(0, 4), (1, 3), (2, 0)\}, & \{(0, 0), (2, 2), (3, 3)\},\\
\{(0, 0), (2, 3), (4, 3)\}, & \{(1, 0), (2,4), (3, 0)\}, & \{(1, 3), (2, 1), (3, 0)\},\\
\end{array}$

\hspace{0.5cm}$\begin{array}{lll}\{(0, 4), (1, 0), (2, 3)\}, & \{(1, 1), (2, 3), (4, 0)\}, & \{(1, 2), (2, 3), (3, 0)\},\\
\{(0, 4), (1, 1), (3, 0)\}, & \{(0, 2), (3, 0), (4, 3)\}, & \{(0, 4), (3, 1), (4, 0)\},\\
\{(1, 0), (3, 1), (4, 3)\}, ~~~& \{(0, 2), (1, 0), (4, 4)\},~~~ & \{(2, 2), (3, 0), (4, 1)\},\\
\{(0, 1), (3, 3), (4, 0)\}, & \{(0, 3), (2, 0), (3, 4)\}, & \{(0, 2), (2, 0), (3, 2)\},\\
\{(1, 2), (2, 2), (4, 0)\}, & \{(1, 0), (3, 4), (4, 0)\}, & \{(2, 0), (3, 3), (4, 1)\},\\
\{(1, 1), (2, 0), (4, 2)\}, & \{(1, 0), (3, 2), (4, 2)\}, & \{(2, 0), (3, 0), (4, 4)\}.\\
\end{array}$
\end{example}

\begin{example}\label{15SPGDD}
All base blocks of a $3$-SPGDD of type $15^5$ on $[0,4]\times [15]$ are listed below.\vspace{0.2cm}

\hspace{0.5cm}$\begin{array}{lll}
\{(0, 0), (1, 7), (2, 0)\},~~~ & \{(0, 1), (1, 7), (3, 0)\},~~~ & \{(0, 2), (1, 7), (4, 0)\},\\
\{(0, 3), (1, 6), (4, 0)\},~~~ & \{(0, 0), (2, 7), (4, 0)\},~~~ & \{(0, 2), (2, 7), (3, 0)\},\\
\{(0, 4), (1, 6), (3, 0)\},~~~ & \{(0, 2), (1, 6), (2, 0)\},~~~ & \{(0, 0), (1, 0), (3, 7)\},\\
\{(0, 0), (1, 1), (4, 7)\},~~~ & \{(0, 0), (2, 1), (3, 6)\},~~~ & \{(0, 0), (2, 2), (4, 5)\},\\
\{(0, 0), (2, 4), (4, 6)\},~~~ & \{(0, 0), (2, 3), (3, 5)\},~~~ & \{(0, 0), (2, 6), (3, 4)\},\\
\{(1, 0), (2, 7), (3, 1)\},~~~ & \{(0, 7), (1, 0), (2, 6)\},~~~ & \{(1, 1), (2, 6), (4, 0)\},\\
\{(1, 2), (2, 5), (4, 0)\},~~~ & \{(1, 1), (2, 5), (3, 0)\},~~~ & \{(0, 0), (3, 0), (4, 4)\},\\
\{(0, 1), (3, 4), (4, 0)\},~~~ & \{(0, 7), (2, 4), (4, 0)\},~~~ & \{(1, 2), (2, 4), (3, 0)\},\\
\{(1, 3), (2, 3), (4, 0)\},~~~ & \{(0, 4), (3, 5), (4, 0)\},~~~ & \{(1, 0), (3, 6), (4, 0)\},\\
\{(0, 4), (1, 0), (4, 5)\},~~~ & \{(0, 0), (3, 2), (4, 3)\},~~~ & \{(1, 4), (3, 7), (4, 0)\},\\
\{(0, 6), (1, 0), (2, 1)\},~~~ & \{(0, 5), (1, 0), (4, 7)\},~~~ & \{(1, 0), (3, 5), (4, 2)\},\\
\end{array}$

\hspace{0.5cm}$\begin{array}{lll}
\{(1, 4), (3, 0), (4, 5)\},~~~ & \{(1, 3), (3, 0), (4, 7)\},~~~ & \{(2, 0), (3, 0), (4, 6)\},\\
\{(1, 4), (2, 0), (4, 7)\},~~~ & \{(1, 2), (2, 0), (3, 6)\},~~~ & \{(0, 7), (1, 5), (3, 0)\},\\
\{(2, 0), (3, 7), (4, 5)\},~~~ & \{(1, 1), (2, 0), (3, 3)\},~~~ & \{(0, 3), (1, 0), (3, 0)\},\\
\{(2, 0), (3, 4), (4, 4)\},~~~ & \{(0, 7), (2, 0), (3, 1)\},~~~ & \{(2, 3), (3, 0), (4, 3)\},\\
\{(0, 6), (1, 5), (2, 0)\},~~~ & \{(1, 5), (2, 2), (4, 0)\},~~~ & \{(0, 6), (3, 1), (4, 0)\},\\
\{(0, 5), (2, 1), (4, 0)\},~~~ & \{(2, 1), (3, 0), (4, 2)\}.\\
\end{array}$
\end{example}

SPGDDs are closely related to  perfect difference matrices  (PDMs).  A $k\times m$ matrix $D=(d_{ij})$ with entries from $[m]$ is called a {\em perfect difference matrix}, denoted by  PDM$(k,m)$, if for all $1\leq s<t\leq k$, the list of differences $\{d_{sj}-d_{tj}:1\leq j\leq m\}$ contains each element of $[m]$ exactly once. A $k$-SPGDD  of type $m^k$ is equivalent to a PDM$(k,m)$.

\begin{lemma}{\rm(Ge et al. \cite{GeG}, Wang et al. \cite{WLD})}\label{345PDM}\\
$(1)$~There exists a PDM$(3,m)$ for any odd integer $m\geq3;$\\
$(2)$~There exists a PDM$(4,m)$~for any odd integer $3<m<200$ expect for $m=9,11$ and possibly for $m=59;$\\
$(3)$~There exists a PDM$(5,m)$ for any $m=5^{a_1}121^{a_2}161^{a_3}201^{a_4}$, where $a_1,a_2,a_3,a_4$ are non-negative integers, not all equal to $0$.
\end{lemma}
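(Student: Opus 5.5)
Since this lemma only collects results from Ge et al.\ and Wang et al., the proof in the paper will consist of citations. If I were to establish it myself, I would rest all three parts on one recursive tool, a \emph{product construction} for perfect difference matrices, and supply small base matrices by direct construction or computer search.

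\textbf{Product construction.} Let $D^{(1)}=(a_{ij})$ be a PDM$(k,m_1)$ and $D^{(2)}=(b_{il})$ a PDM$(k,m_2)$. Build a $k\times m_1m_2$ matrix with columns indexed by pairs $(j,l)$ and entries $a_{ij}+m_1b_{il}$.
\begin{itemize}
\item The map $(x,y)\mapsto x+m_1y$ is a bijection from $[m_1]\times[m_2]$ onto $[m_1m_2]$, so every entry lies in $[m_1m_2]$.
\item For fixed rows $s<t$, the column $(j,l)$ gives the difference $(a_{sj}-a_{tj})+m_1(b_{sl}-b_{tl})$.
\item As $j$ runs over the columns, $a_{sj}-a_{tj}$ covers $[m_1]$ once; as $l$ runs, $b_{sl}-b_{tl}$ covers $[m_2]$ once. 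By the same bijection, the differences cover $[m_1m_2]$ exactly once.
\end{itemize}
So PDM$(k,m_1)$ and PDM$(k,m_2)$ give a PDM$(k,m_1m_2)$.

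\textbf{Part (3).} By the product construction, it suffices to exhibit PDM$(5,m)$ for $m\in\{5,121,161,201\}$. I would find these by computer. For the larger orders I would restrict the search to matrices invariant under a suitable multiplier or cyclic symmetry, so that the search becomes feasible.

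\textbf{Part (1).} Normalize the rows of a PDM$(3,m)$, $m=2t+1$, as $(x,\,0,\,c(x))$ for $x\in[m]$. The three row-difference conditions then reduce to two: $c$ must be a permutation of $[-t,t]$, and $x\mapsto x-c(x)$ must also be a permutation of $[-t,t]$, with all values taken as integers rather than residues. I would build $c$ piecewise-linearly. For example, take $c(x)\equiv -2x \pmod m$, reduced into $[m]$, so that $x-c(x)\equiv 3x$. Then adjust on a few intervals, splitting on $m$ modulo a small modulus, so that the integer differences stay inside $[-t,t]$ and the case $3\mid m$ is also covered. Small $m$ I would check by hand.

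\textbf{Part (2).} For odd $3<m<200$ I would use direct computer constructions for primes and small base orders, and fill in composite orders with the product construction. Non-existence for $m=9,11$ needs an exhaustive search. Normalize the first row to $0$ and the second row to the identity, then backtrack on the remaining two rows under the symmetry $x\mapsto -x$.

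\textbf{Main obstacle.} I expect the hardest step to be the uniform construction in part (1): getting integer, rather than modular, differences to stay in $[m]$ for every odd $m$ requires a delicate case split. The computer searches for large orders such as $m=201$ in part (3), and the single open case $m=59$ in part (2), are the other bottlenecks.
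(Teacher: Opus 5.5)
The paper gives no proof of this lemma; it only cites Ge et al.\ and Wang et al. Several parts of your outline are correct.
\begin{itemize}
\item The product construction is correct, since $(x,y)\mapsto x+m_1y$ is a bijection $[m_1]\times[m_2]\to[m_1m_2]$.
\item It does reduce part (3) to $m\in\{5,121,161,201\}$.
\item Your normalization for the searches in part (2) is legitimate: subtracting row $1$ columnwise turns the entries into row differences, so they stay in $[m]$.
\end{itemize}
Two corrections for part (2). First, PDM$(4,3)$, PDM$(4,9)$ and PDM$(4,11)$ do not exist, so many composite orders below $200$ cannot be obtained from the product construction. Examples are $15,21,27,33,45,63,81,99,121,187$, and also $177=3\cdot 59$. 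These all need direct constructions. Second, $m=59$ is explicitly excluded in the statement, so it is not an obstacle you must overcome.

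The genuine gap is part (1). As written it contains no construction, and the starting map you suggest cannot be fixed by adjusting a few intervals. Any valid $c$, reduced mod $m$, must make both $c$ and $x\mapsto x-c(x)$ bijections of $\mathbb{Z}_m$. With $c(x)\equiv -2x$ you get $x-c(x)\equiv 3x$.
\begin{itemize}
\item When $3\mid m$, the map $x\mapsto 3x$ is $3$-to-$1$, so at least $2m/3$ values of $c$ would have to change.
\item Even when $3\nmid m$, take $t/3<|x|\le t/2$. The congruence forces $c(x)=-2x$, because the other representative exceeds $t$ in absolute value. Hence $|x-c(x)|=3|x|>t$, which is out of range.
\end{itemize}
The missing idea is to use the multiplier $2^{-1}$ instead of $-2$, i.e.\ to split by parity. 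For $m=2t+1$ and $x\in[-t,t]$ put
\[
c(x)=\begin{cases} \dfrac{x-t}{2}, & x\equiv t \pmod{2},\\[3pt] \dfrac{x+t+1}{2}, & x\not\equiv t \pmod{2}, \end{cases}
\qquad
x-c(x)=\begin{cases} \dfrac{x+t}{2}, & x\equiv t \pmod{2},\\[3pt] \dfrac{x-t-1}{2}, & x\not\equiv t \pmod{2}. \end{cases}
\]
The $t+1$ values $x\equiv t$ are sent by $c$ onto $[-t,0]$ and by $x-c$ onto $[0,t]$. The $t$ values $x\not\equiv t$ are sent onto $[1,t]$ and $[-t,-1]$ respectively. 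So the matrix with columns $(x,0,c(x))^{T}$, $x\in[m]$, is a PDM$(3,m)$ for every odd $m\geq 3$, with no case split.

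Part (1) is therefore the easy part of the lemma, not the main obstacle. The real content lies in the computer constructions behind parts (2) and (3), and in the non-existence searches for $m=9,11$.
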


\subsection{Semi-perfect modified group divisible designs}

A {\em modified group divisible design} (MGDD) is a quadruple $(X,{\cal H},{\cal G},{\cal B})$ where $X$ is a finite set of $mn$   points, ${\cal G}$ is a partition of $X$ into $n$ subsets (called {\em groups}) with  size $m$, ${\cal H}$ is another partition of $X$ into $m$ subsets (called {\em holes}) with size $n$ satisfying that  $|H\cap G|=1$ for any $H\in{\cal H}$ and $G\in{\cal G}$, $\cal B$ is a set of subsets (called {\em blocks}) of $X$ such that each block has size from a set of
positive integers $K$ and no block contains two distinct points of any group or any hole, but any other pair of distinct points of $X$ occurs in exactly one block of $\cal B$.
Such a design is denoted by a $K$-MGDD of type $m^n$.   A $\{k\}$-MGDD  is usually abbreviated as  a $k$-MGDD.

\begin{lemma}{\rm (Abel et al. \cite{MGDD}, Cao et al. \cite{caoh})}\label{MGDD}\\
$(1)$ There exists a $3$-MGDD of type $m^n$ if and only if $n, m\geq3$, $(n-1)(m-1)\equiv0\pmod {2}$ and $n(n-1)m(m-1)\equiv0\pmod {6};$\\
$(2)$ There exists a $4$-MGDD of type $m^n$ if and only if $n, m\geq4$ and $(n-1)(m-1)\equiv0\pmod {3}$ except for $(n,m)=(4,6);$\\
$(3)$ There exists a $5$-MGDD of type $5^5$.
\end{lemma}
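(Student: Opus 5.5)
The statement collects three existence results, and I would treat each the same way: prove necessity by counting, and obtain sufficiency either by an explicit structure or by the standard recursive machinery for group divisible designs. Since parts (1) and (2) are exactly the theorems of Abel et al.\ \cite{MGDD} and Cao et al.\ \cite{caoh}, in the paper I would mostly rely on those references. I would check only the necessary conditions and the easy part (3) directly.

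\textbf{Necessity.} Consider a $k$-MGDD of type $m^n$ and a point $x$ in it.
\begin{itemize}
\item The point $x$ lies in one group of size $m$ and one hole of size $n$, and these meet only in $x$. Hence $x$ must be joined by blocks to exactly $(n-1)(m-1)$ other points.
\item Each block through $x$ accounts for $k-1$ of these points, so $(k-1)\mid (n-1)(m-1)$.
\item The total number of pairs to be covered is $nm(n-1)(m-1)/2$, and each block covers $k(k-1)/2$ of them. So $k(k-1)\mid nm(n-1)(m-1)$.
\end{itemize}
For $k=3$ these give $(n-1)(m-1)\equiv 0\pmod 2$ and $n(n-1)m(m-1)\equiv0\pmod 6$. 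For $k=4$, the $3$-divisibility $(n-1)(m-1)\equiv0\pmod 3$ is the main condition. In both cases the lower bounds $n,m\ge k$ hold because a block meets each group and each hole in at most one point. The exception $(n,m)=(4,6)$ for $k=4$ is a small case, and I would settle it by exhaustive search (equivalently, via its known connection to nonexistent orthogonal-array-type objects). I expect this to be the only place needing ad hoc work.

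\textbf{Sufficiency for (1) and (2).} A $k$-MGDD of type $m^n$ is the same as a $k$-GDD on the $m\times n$ grid in which rows and columns are both "forbidden" partitions. The natural route is as follows.
\begin{itemize}
\item Start from a transversal design $\mathrm{TD}(k,t)$, or from a $k$-GDD on a smaller grid.
\item Apply Wilson's fundamental construction, inflating by weights and using small MGDDs as ingredients.
\item Fill in holes with smaller MGDDs, so that the problem reduces to a finite list of small $(n,m)$.
\item Settle those small cases by direct (computer) constructions.
\end{itemize}
I expect the main obstacle to be handling all residue classes with enough small ingredient designs. This is precisely what \cite{MGDD,caoh} carry out, so I would cite them rather than reproduce it.

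\textbf{Part (3).} Take the points to be $\mathbb{Z}_5\times\mathbb{Z}_5$, the groups to be the vertical lines $x=c$, and the holes to be the horizontal lines $y=c$. The blocks are the $20$ lines $y=ax+b$ with $a\in\{1,2,3,4\}$ and $b\in\mathbb{Z}_5$.

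These lines come from the affine plane $AG(2,5)$. Any two distinct points not in a common group or hole determine a line of slope $a\ne0,\infty$, so they lie in exactly one block. Each block has $5$ points, one in each row and one in each column.

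As a consistency check, the $20$ blocks each cover $10$ pairs, giving $200$ pairs. This matches the required total $25\cdot16/2=200$. So this is a $5$-MGDD of type $5^5$.
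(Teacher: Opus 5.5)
The paper gives no proof of this lemma. It is quoted from the literature, and only parts (1) and (3) are used later, for the types $3^5$ and $5^5$ in Lemma~\ref{75SPGDD}. Deferring sufficiency in (1) and (2) to the references is therefore exactly what the paper does. The parts you verify yourself are correct:
the local condition $(k-1)\mid(n-1)(m-1)$ and the global condition $k(k-1)\mid nm(n-1)(m-1)$ give the stated congruences. For $k=4$ the global condition is automatic once $3\mid(n-1)(m-1)$, since $n(n-1)$ and $m(m-1)$ are even. The $20$ lines of $AG(2,5)$ with slope $a\in\{1,2,3,4\}$ do form a $5$-MGDD of type $5^5$.

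The genuine problem is in part (2), and your plan inherits it. Your own grid description shows that the MGDD definition is symmetric under exchanging groups and holes. Relabelling the $m$ holes of size $n$ as groups, and the $n$ groups of size $m$ as holes, turns a $K$-MGDD of type $m^n$ into one of type $n^m$.

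Make your ``orthogonal-array'' remark precise instead of searching. In a $4$-MGDD of type $6^4$, every block meets all four groups. The six holes form a parallel class of transversals, so adjoining them as blocks gives a TD$(4,6)$, i.e.\ a pair of MOLS of order $6$, which does not exist. By the symmetry, the same argument excludes type $4^6$, i.e.\ $(n,m)=(6,4)$.

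But $(n,m)=(6,4)$ satisfies $n,m\ge 4$, $(n-1)(m-1)=15\equiv 0\pmod 3$, and $(6,4)\neq(4,6)$, so part (2) as written asserts that this design exists. Hence your step ``sufficiency by citation, with $(4,6)$ the only ad hoc case'' cannot be carried out. The statement is false at $(n,m)=(6,4)$, and the correct exception is $\{n,m\}=\{4,6\}$. Whatever notation the original sources use, their exceptional set must be symmetric for the same reason.

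The warning sign you missed is that every condition you derived is symmetric in $n$ and $m$, while the listed exception is not. The slip is harmless for the paper, which never invokes (2).

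A smaller point: your bound $n,m\ge k$ requires at least one block to exist. For $n=1$ or $m=1$ the empty block set satisfies the definition vacuously, so the ``only if'' direction tacitly assumes nontrivial designs.
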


Let $S$ be a set of $n$ points and $X=S\times[m]$, $\mathcal{G}=\{\{i\}\times [m]:$ $i\in S\}$ and $\mathcal{H}=\{S\times\{x\}:x\in[m]\}$.
Let $\mathcal{F}$ be a collection of subsets (called \emph{base blocks}) of $S\times[m]$. For $i,j\in S$ and $B\in{\cal F}$, define  multi-sets $\Delta_{ij}(B)$  and   $\Delta_{ij}({\cal F})$ as in Section 2.1. If for any $i,j\in S$,
$$\Delta_{ij}({\cal F})=\left\{
\begin{array}{lll}
[m]\setminus\{0\}, & \ \ i\neq j;\\
\emptyset, & \ \ i=j,\\
\end{array}
\right.
$$
then a $K$-MGDD of type $m^n$ with the point set $X'=S\times [0,m-1]$ together with the group set ${\cal {G}}' =\{\{i\} \times [0,m-1]: i\in S\}$ and the hole set ${\cal H}'=\{S\times \{x\}: x\in[0,m-1]\}$ can be generated from ${\cal F}$, where $K=\{|B|:B\in{\cal F}\}$. The required blocks are
obtained by developing all base blocks of ${\cal F}$ by successively
adding $1$ to the second component of each point of these base
blocks modulo $m$. We call $\mathcal{F}$ a \emph{semi-perfect modified group divisible design}, denoted by $K$-SPMGDD of type $m^n$.
Clearly, if $\mathcal{F}$ is a $K$-SPMGDD of type $m^n$ on $S\times [m]$, then $\mathcal{F}\cup\{S\times\{0\}\}$ forms a $(K\cup\{n\})$-SPGDD of type $m^n$.

\section{Construction methods}
In this section, we  present some recursive constructions for  geometric difference packings.

Let $H_1$ and $H_2$ be two sets of integers. For any positive integers $r_1$ and $r_2$, define
$$H_1^{r_1}\times H_2^{r_2}:=\{(r_1 x_1, r_2 x_2):(x_1,x_2)\in H_1\times H_2\}.$$
When $H_i=[h_i]$ for some odd positive integer $h_i$, we write $H_i^{r_i}$ as $[h_i]^{r_i}$. When $r_i=1$, we omit $r_i$ from the notation. An $(N \times M,K,1)$-GDF can be viewed as an $(N \times M,K,1)$-GDP with leave $[1]^{r_1}\times [1]^{r_2}$ for any positive integers $r_1$ and $r_2$. Further, if there exists an $(N \times M,K,1)$-GDP with leave $H_1^{r_1}\times H_2^{r_2}$, then $H_i$ contains $0$ and if $a\in H_i$, then $-a\in H_i$, where $i=1,2$.

\begin{lemma}\label{origin}{\rm (Su et al. \cite{Su}, Wang et al. \cite{WLD})}
There exists an $(m,3,1)$-GDP with leave $[h]$ for any $(m,h)$ satisfying one of the following conditions$:$\vspace{0.1cm}

$(1)$~$m\equiv1,7\pmod{24}$,  $m\geq49$ and  $h=7,$ or  $m\geq175$ and  $h=25;$

$(2)$~$m\equiv3,21\pmod{24}$,  $m\geq21$ and $h=3,$  or $m\geq147$ and $h=21;$

$(3)$~$m\equiv5,11\pmod{24}$,  $m\geq35$ and $h=5,$  or $m\geq77$ and $h=11;$

$(4)$~$m\equiv9,15\pmod{24}$, $m\geq63$ and $h=9,$  or $m\geq105$ and $h=15;$

$(5)$~$m\equiv13,19\pmod{24}$,  $m\geq91$ and $h=13,$  or $m\geq133$ and $h=19;$

$(6)$~$m\equiv17,23\pmod{24}$,  $m\geq119$ and $h=17,$  or $m\geq161$ and $h=23$.
\end{lemma}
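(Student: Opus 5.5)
The plan is to reduce the statement to the classical existence theorem for Langford sequences (Simpson's theorem), rather than constructing the packings directly. First reformulate. A base block $\{0,a,a+b\}$ with $a,b>0$ and $a+b\le (m-1)/2$ lies in $[m]$. Its difference list is $\{\pm a,\pm b,\pm(a+b)\}$. Hence an $(m,3,1)$-GDP with leave $[h]$ exists as soon as the set of positive differences
$$D=\Big[\tfrac{h+1}{2},\tfrac{m-1}{2}\Big]$$
can be partitioned into triples $\{a,b,c\}$ with $a+b=c$. Put $d=(h+1)/2$ and $n=(m-h)/6$. Then $D=[d,d+3n-1]$ has exactly $3n$ elements. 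In each case of the lemma, $m$ and $h$ lie in the same class mod $6$ and $m-h$ is even, so $n$ is a positive integer.

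Such partitions come from Langford sequences. Suppose a Langford sequence of defect $d$ and order $n$ is given, with the two occurrences of each $k\in[d,d+n-1]$ at positions $a_k<b_k$ in $[1,2n]$ and $b_k-a_k=k$. Then the triples
$$\{k,\ a_k+d+n-1,\ b_k+d+n-1\},\qquad k\in[d,d+n-1],$$
satisfy $k+(a_k+d+n-1)=b_k+d+n-1$. The first entries cover $[d,d+n-1]$ and the other two cover $[d+n,d+3n-1]$. So they partition $D$ as required, and the base blocks $\{0,k,b_k+d+n-1\}$ give the packing.

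Simpson's theorem says a Langford sequence of defect $d$ and order $n$ exists if and only if $n\ge 2d-1$ together with a congruence condition: $n\equiv 0,1\pmod 4$ when $d$ is odd, and $n\equiv 0,3\pmod 4$ when $d$ is even.
\begin{itemize}
\item The size condition $n\ge 2d-1$ reads $(m-h)/6\ge h$, that is, $m\ge 7h$. This is exactly the lower bound in every case of the lemma, for instance $49=7\cdot 7$, $175=7\cdot 25$, $21=7\cdot 3$ and $161=7\cdot 23$.
\item The congruence condition is checked case by case for each of the twelve pairs (residue class of $m$ mod $24$, value of $h$). For example, take $m\equiv 1,7\pmod{24}$ and $h=7$. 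Then $d=4$ is even and $n=(m-7)/6\equiv 3,0\pmod 4$, as required. For $h=25$ we get $d=13$, which is odd, and $n=(m-25)/6\equiv 0,1\pmod 4$.
\end{itemize}
The residue classes in the lemma are chosen so that this check succeeds, and I expect it to be routine in all twelve cases.

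The main obstacle would be any case where the mod-$4$ condition fails. There the usual Langford sequence does not exist and only a hooked Langford sequence does, which leaves one extra difference uncovered. I would first verify that all listed cases avoid this. If one did not, I would fall back on a hooked sequence and absorb the stray difference by adjusting $h$, or by a small direct modification. In that situation the argument would rely on the cited constructions of Su et al.\ and Wang et al.
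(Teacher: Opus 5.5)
Your proposal is correct and follows essentially the route behind this cited lemma. With $d=(h+1)/2$ and $n=(m-h)/6$ it is exactly Construction~\ref{LD-GDP} (Chee et al.) combined with Simpson's Lemma~\ref{LDjieguo}, the bound $m\ge 7h$ being $n\ge 2d-1$.

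The ten congruence cases you left unchecked also go through, so the hooked-Langford fallback is never needed. In every case the residues of $m$ modulo $24$ are $r$ and $r+6$ with $r\equiv 1\pmod 4$ (take $r=21$ in case (2)), and $h\equiv r$ or $r+6\pmod{24}$. If $h\equiv r$, then $d$ is odd and $n\equiv 0,1\pmod 4$. If $h\equiv r+6$, then $d$ is even and $n\equiv 0,3\pmod 4$.
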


\begin{Construction}{\rm (Wang et al. \cite{WLD})}\label{nm-nxm,3,1}
If there exists an $(n,3,1)$-GDP with leave ${[h_1]}^{r_1}$ and an $(m,3,1)$-GDP with leave ${[h_2]}^{r_2}$,  then there exists an $(n\times m,3,1)$-GDP with leave ${[h_1]}^{r_1}\times {[h_2]}^{r_2}$.
\end{Construction}

\begin{Construction}\label{budong}{\rm (Su  et al. \cite{Su})}
Suppose that there exists\vspace{0.1cm}

\noindent $(1)$ an $(n\times m,K,1)$-GDP with leave $H_1^{r_1}\times H_2^{r_2}$, and

\noindent $(2)$ an $(H_1\times H_2,L,1)$-GDP with leave $T_1^{s_1}\times T_2^{s_2}$,\vspace{0.1cm}

\noindent then there exists an $(n\times m,K\cup L,1)$-GDP with leave $T_1^{r_1s_1}\times T_2^{r_2s_2}$.
Moreover, if the  $(H_1\times H_2,L,1)$-GDP is a GDF, then the resulting GDP is also a GDF.
\end{Construction}

\begin{Construction}\label{N-NxM-GDP}{\rm (Su  et al. \cite{Su})}
Suppose that there exists an $(n,K,1)$-GDP with leave $H^r.$ If
there exists an $L$-SPGDD of type $m^k$ for each $k\in K$, then
there exists an $(n\times m, L,1)$-GDP with leave $H^r\times [m]$.
\end{Construction}

\begin{Construction}\label{MGDD-SPMGDD}{\rm (Su  et al. \cite{Su})}
Suppose there exists an $(m,K,1)$-GDF. If there exists an $L$-MGDD of type $k^h$ for each $k\in K$, then there exists an $L$-SPMGDD of type $m^h$.
\end{Construction}

Langford sequence is an important combinatorial structure,  which has been used to construct  geometric difference packing by Chee et al. \cite{cklh}. A {\em Langford sequence of order $n$ and defect $d$}, $n>d$, is a partition of $[1,2n]$ into a collection of ordered pairs $(a_i,b_i)$ such that $\{b_i-a_i : 1 \leq i \leq n\}=[d,d+n-1]$.

\begin{lemma}{\rm (Simpson \cite{LD})}\label{LDjieguo}
A Langford sequence of order $n$ and defect $d$  exists if and only if
~$(1)$~$n\geq2d-1$, and
~$(2)$~$n\equiv0,1\pmod{4}$ for odd $d$, or $n\equiv0,3\pmod{4}$ for even $d$.
\end{lemma}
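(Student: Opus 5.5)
This lemma is Simpson's theorem on Langford sequences, which the paper quotes from the literature rather than proving. If I were to reprove it, the plan has two parts: derive the necessary conditions by counting, and prove sufficiency by explicit constructions organized by residue classes.

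\textbf{Necessity.} Condition $(2)$ is a parity count. The pairs partition $[1,2n]$, so
$$\sum_i (a_i+b_i)=\sum_{j=1}^{2n} j=n(2n+1).$$
Write $a_i+b_i=2a_i+(b_i-a_i)$. Since the differences $b_i-a_i$ run over $[d,d+n-1]$, we get
$$\sum_i (a_i+b_i)=2\sum_i a_i+nd+\tfrac{n(n-1)}{2}.$$
Comparing the two expressions modulo $2$ gives $n\equiv nd+\tfrac{n(n-1)}{2}\pmod 2$. Checking the four cases $n\equiv 0,1,2,3\pmod 4$ against the parity of $d$ yields exactly $(2)$.

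Condition $(1)$ needs a more careful count. The $n$ smaller elements $a_i$ together with the $n$ larger elements $b_i$ fill $[1,2n]$. The pair with difference $d+n-1$ forces $2n\ge d+n$, so $n\ge d$. To sharpen this to $n\ge 2d-1$, I would argue as follows. Every pair has difference at least $d$. Hence none of the elements $1,\dots,d$ can be a right endpoint $b_i$, and none of $2n-d+1,\dots,2n$ can be a left endpoint $a_i$. So $[1,d]$ consists only of left endpoints and $[2n-d+1,2n]$ only of right endpoints. Combining this with the total-sum identity above, i.e. bounding $\sum_i b_i-\sum_i a_i=nd+\binom n2$ from above using these positional constraints, should give $n\ge 2d-1$. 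This is Simpson's counting argument.

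\textbf{Sufficiency.} This is the main obstacle. It requires explicit families of pairs for each residue of $n$ modulo $4$ and each parity of $d$, covering the whole range $n\ge 2d-1$. I would first try the standard layered construction:
\begin{enumerate}
\item Take nested pairs $(x,y)$ with constant sum $x+y$. These have differences of fixed parity that step by $2$.
\item Take a second nested block with differences of the other parity.
\item Adjust with a few exceptional pairs to fix the boundary.
\end{enumerate}
Doing this separately for $n=2d-1+4t$ through $n=2d+2+4t$ should work, with small base cases handled by hand. The final step is to verify that these pieces partition $[1,2n]$. Because the statement is the known theorem of Simpson \cite{LD}, in the paper's context I would simply cite it rather than reproduce the case analysis.
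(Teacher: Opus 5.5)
The paper does not prove this lemma; it quotes it from Simpson. Your final move, citing the result, is therefore exactly the paper's treatment.

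Your necessity argument is sound. The parity count gives $(2)$ correctly. For $(1)$ you stop one line short, and the positional observations you invoke (that $[1,d]$ holds only left endpoints and $[2n-d+1,2n]$ only right endpoints) add nothing. For any split of $[1,2n]$ into two $n$-sets,
$$\sum_i b_i-\sum_i a_i\le\sum_{j=n+1}^{2n}j-\sum_{j=1}^{n}j=n^2 .$$
Hence $nd+\binom n2\le n^2$, which is equivalent to $n\ge 2d-1$.

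The sufficiency part is not a proof, and it should not be presented as one. All the substance of Simpson's theorem lies in exhibiting explicit families of pairs. Your outline does not supply them: the nested constant-sum blocks and exceptional pairs are unspecified, nothing checks that they partition $[1,2n]$, and the small base cases are left to be done by hand.

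Concretely, there is no finite set of base cases to handle by hand. The minimal admissible order $2d-1$ grows with $d$. At $n=2d-1$ the counting bound above is an equality, which forces $\{a_i\}=[1,n]$ and $\{b_i\}=[n+1,2n]$. So for each parity of $d$ and each of its two admissible residues of $n$ modulo $4$, your families must be uniform in $d$ all the way down to this rigid extremal case.

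You should either write out such families and verify them, as Simpson does, or rely on the citation as the paper does.
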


\begin{Construction}{\rm (Chee et al. \cite{cklh})}\label{LD-GDP}
If there exists a  Langford sequence of order $n$ and defect $d$, then there exists a $(6n+2d-1,3,1)$-GDP with leave $[2d-1]$.
\end{Construction}

\begin{Construction}{\rm (Wu et al. \cite{WuD})}\label{6m+u}
Suppose that there exists a $(u,K,1)$-GDF and a Langford sequence of order $m$ and defect $\frac{u+1}{2}$, then there exists a $(6m+u,K\cup\{3\},1)$-GDF.
\end{Construction}

\section{ $(n\times m, \{3,5\},1)$-GDFs}
In this section, we investigate the existence of $(n\times m,\{3,5\},1)$-GDFs. We first consider the case of $m=1$, then some $(n\times m, \{3,5\},1)$-GDFs can be obtained from $(n, \{3,5\},1)$-GDFs by applying the constructions given in Section $3$.  With the help of computer search, some GDFs with small parameters are also obtained. In addition, an $(n\times m,3,1)$-GDF or an $(n\times m,5,1)$-GDF is obviously also an $(n\times m,\{3,5\},1)$-GDF.

\begin{lemma}\label{non}
There is no $(n\times m,\{3,5\},1)$-GDF for $n\equiv3,5\pmod{8}$ and $m\equiv1\pmod{2}$.
\end{lemma}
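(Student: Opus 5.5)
We prove the statement by a counting argument carried out in a projection of the differences onto the first coordinate. Suppose $\mathcal{B}$ is an $(n\times m,\{3,5\},1)$-GDF with $n\equiv 3,5\pmod 8$ and $m$ odd. Consider the projection $\pi:[n]\times[m]\to[n]$, $(x,y)\mapsto x$. Every element $(a,b)$ with $a\neq 0$ is covered exactly once by $\Delta\mathcal{B}$, and so is every element $(0,b)$ with $b\neq 0$.

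The idea is to count, among all differences, the number $D$ whose first coordinate is nonzero. These are exactly the elements of $([n]\setminus\{0\})\times[m]$, so
\[ D=(n-1)m. \]
Since $n\equiv 3,5\pmod 8$, we have $n-1\equiv 2,4\pmod 8$. Because $m$ is odd, $D\equiv 2$ or $4\pmod 8$; in particular $D\not\equiv 0\pmod 8$.

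Next we compute the contribution of each base block to $D$. For a block $B$, partition its points by first coordinate into classes of sizes $c_1,\dots,c_t$ with $\sum c_i=|B|=k$. The number of ordered pairs with distinct first coordinates is
\[ k^2-\sum c_i^2 = k(k-1)-\sum c_i(c_i-1). \]
Each unordered pair contributes the two differences $\pm(a,b)$, so the quantity that matters modulo $8$ is the number $P_B$ of unordered pairs $\{u,v\}\subseteq B$ with $u_1\neq v_1$; then $D=2\sum_B P_B$. It therefore suffices to show $\sum_B P_B\equiv 0\pmod 4$, which would contradict $D/2=(n-1)m/2\equiv 1,2\pmod 4$.

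A cleaner route is to use the sharper parity structure of the first coordinate, as in the classical proof that an $(n,3,1)$-GDF needs $n\equiv 1,7 \pmod{24}$. For each block, consider the sum of $|x_1-x_2|$, or alternatively the sum of the first-coordinate differences taken over a fixed orientation. Take a block $\{x_1<\dots<x_k\}$ of first coordinates, counted with multiplicity, and orient each pair from smaller to larger (pairs with equal first coordinate contribute $0$). The positive first-coordinate differences have total
\[ S=\sum_{B}\sum_{i<j}(x_j-x_i). \]
For $k=3$ this sum equals $2(x_3-x_1)$, which is even. For $k=5$ it equals $4(x_5-x_1)+2(x_4-x_2)$, which is also even. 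Hence $S$ is even. On the other hand, the positive first coordinates $a\in[1,\frac{n-1}{2}]$ each occur exactly $m$ times (once for each $b$), except that for $a=0$ nothing is counted. Careful orientation is needed here: a difference $(a,b)$ with $a>0$ arises from a unique pair, and its negative $(-a,-b)$ arises from the same pair. So
\[ S=m\sum_{a=1}^{(n-1)/2}a=m\cdot\frac{(n-1)(n+1)}{8}. \]
When $n\equiv 3,5\pmod 8$, $(n^2-1)/8$ is odd, since $n^2\equiv 9\pmod{16}$ gives $n^2-1\equiv 8\pmod{16}$. As $m$ is odd, $S$ is odd, contradicting the evenness shown above.

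The main obstacle is verifying the block-sum evenness for $k=5$ when first coordinates coincide within a block. The formula $\sum_{i<j}(x_j-x_i)=\sum_i (2i-k-1)x_i$ for sorted values, which still holds with ties, gives coefficients $-4,-2,0,2,4$ for $k=5$ and $-2,0,2$ for $k=3$. All coefficients are even, so the parity claim holds regardless of ties, and the argument is complete.
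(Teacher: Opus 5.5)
Your second argument is correct and is essentially the paper's proof. The paper likewise sums the first coordinates of all differences with positive first coordinate, obtains $2(a_3-a_1)$ and $2(2a_5+a_4-a_2-2a_1)$ per block (ordering points lexicographically, which handles coinciding first coordinates exactly as your formula $\sum_i(2i-k-1)x_i$ does), and compares this even total with the odd number $m\cdot\frac{(n-1)(n+1)}{8}$.

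Delete the opening route via $D=(n-1)m$ and the target $\sum_B P_B\equiv 0\pmod 4$. It is a dead end: nothing at the level of individual blocks forces that congruence, and e.g.\ the $(7,3,1)$-GDF $\{0,1,3\}$ has $\sum_B P_B=3$.
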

\Proof Suppose that $\mathcal{B}$ is an $(n\times m,\{3,5\},1)$-GDF. Let the base blocks  be $\{(a_{i_1},b_{i_1}),$ $(a_{i_2},b_{i_2}),(a_{i_3},b_{i_3})\}$~and~$\{(a_{j_1},b_{j_1}),(a_{j_2},b_{j_2}),(a_{j_3},b_{j_3}),(a_{j_4},b_{j_4}),(a_{j_5},b_{j_5})\}$, where $1\leq i\leq x_3,$ $1\leq j\leq x_5$, whose elements are ordered lexicographically. Then the sum of the first coordinates of those differences lying in the two right quadrants in $\Delta\mathcal{B}$ is
$\sum_{i=1}^{x_3}2(a_{i_3}-a_{i_1})+\sum_{j=1}^{x_5}2(2a_{j_5}+a_{j_4}-a_{j_2}-2a_{j_1})$,  which is even.
On the other hand,  the sum of the first coordinates of those differences lying in the two right quadrants in $\Delta\mathcal{B}$ is $(1+2+3+\cdots + \frac{n-1}{2})m=\frac{(n-1)(n+1)m}{8}$ by the property of geometric difference families.

Let $n=2k+1$, then $\frac{(n-1)(n+1)m}{8}=\frac{2k(2k+2)m}{8}=\frac{k(k+1)m}{2}.$
When  $k=4t,$   $\frac{k(k+1)}{2}=2t(4t+1)$ is even;
When  $k=4t+1,$   $\frac{k(k+1)}{2}=(2t+1)(4t+1)$ is odd;
When $k=4t+2,$   $\frac{k(k+1)}{2}=(2t+1)(4t+3)$ is odd;
When  $k=4t+3,$  $\frac{k(k+1)}{2}=(2t+2)(4t+3)$ is even. Thus, when $n\equiv3,5\pmod{8}$ and $m\equiv1\pmod{2}$,
$\frac{(n-1)(n+1)m}{8}$ is odd,   a contradiction. Therefore, there is no $(n\times m,\{3,5\},1)$-GDF for  $n\equiv3,5\pmod{8}$~and~$m\equiv1\pmod{2}$. \qed

\begin{lemma}\label{yiwei}
There is an $(n,\{3,5\},1)$-GDF if and only if $n\equiv1,7\pmod{8}$ and $n\notin\{9, 15, 17,$ $23, 41, 47\}$.
\end{lemma}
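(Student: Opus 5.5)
Necessity comes from two arguments. The first is Lemma \ref{non} with $m=1$, which excludes $n\equiv3,5\pmod 8$. The second is a counting argument. Let $x_3$ and $x_5$ be the numbers of base blocks of sizes $3$ and $5$; then $n-1=6x_3+20x_5$, so $n$ is odd and $n\equiv1,7\pmod8$ survives. For the sporadic exceptions $n\in\{9,15,17,23,41,47\}$ I would argue directly. For $n=9,15,17,23$ the equation $n-1=6x_3+20x_5$ forces $x_5=0$ (e.g. $8,14,16,22$ have no representation with $x_5\ge1$, since $8-20<0$, etc.). An $(n,3,1)$-GDF needs $n\equiv1,7\pmod{24}$ by Lemma \ref{n,3,1}(1), which rules out $9,15,17,23$ (for $n=17,23$ note $16$ and $22$ are not divisible by $6$ anyway). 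For $n=41$ we have $40=20\cdot2$ or $6x_3+20x_5$ with $x_5=2$, $x_3=0$ only ($40-20=20$ is not divisible by $6$). Hence it would be a $(41,5,1)$-GDF, impossible by Lemma \ref{n,3,1}(4). For $n=47$ we have $46=6\cdot1+20\cdot2$ or $x_5=0$, $46/6\notin\mathbb Z$, so there is one triple and two $5$-blocks. I would exclude this by a short exhaustive computer search (or a parity/sum argument on the $5$-blocks, as in Lemma \ref{non}, refined by hand). I expect this case to be the main obstacle on the nonexistence side.

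For sufficiency I split $n\equiv1,7\pmod 8$ by residue mod $24$. If $n\equiv1,7\pmod{24}$, Lemma \ref{n,3,1}(1) gives an $(n,3,1)$-GDF directly. The remaining classes are $n\equiv9,15,17,23\pmod{24}$. For $n\equiv9,15\pmod{24}$ with $n\ge33$, take Lemma \ref{n-345}(2): an $(n,\{3,5^*\},1)$-GDF. This leaves $n\equiv17,23\pmod{24}$, where the counting forces $x_5\equiv2\pmod 3$ (since $n-1\equiv16,22\pmod{24}$ and $20x_5\equiv n-1\pmod 6$ gives $2x_5\equiv 4\pmod 6$).

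Here I would use Construction \ref{6m+u}. Choose a small seed $u\equiv17,23\pmod{24}$ with a $(u,\{3,5\},1)$-GDF found by computer; the candidates are $u=65,71$ and perhaps $u=89,95$, with $x_5=2$ and triples filling the rest. Then apply the construction with a Langford sequence of order $m'$ and defect $d=(u+1)/2$. We need $6m'+u=n$, i.e. $m'=(n-u)/6$, with $m'\equiv0,1\pmod4$ when $d$ is odd, or $m'\equiv0,3\pmod4$ when $d$ is even, and $m'\ge 2d-1=u$ by Lemma \ref{LDjieguo}. Since $n-u\equiv0\pmod{24}$, $m'\equiv0\pmod4$ works automatically. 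Using a fixed seed per class, the construction covers all $n\ge 7u$ in that class. The finitely many $n$ below these bounds (roughly $n<7u$, e.g. $65,71,89,95,\dots$) I would handle either with several different seeds, or with Lemma \ref{origin} plus Construction \ref{budong} in dimension one. That route uses an $(n,3,1)$-GDP with leave $[h]$ for $h\in\{17,23\}$ ($n\ge119$ or $161$) and fills the leave with a $(17,\{3,5\},1)$- or $(23,\ldots)$-GDF. This fails because those do not exist, so I would instead take leaves $[65]$ or $[71]$ where available, or supply the remaining values by direct computer search.

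The other main obstacle is this finite list of small orders in the classes $17,23\pmod{24}$ (and possibly small $n\equiv9,15$ below $33$, which are only $9,15$ and are excluded). They must be produced by computer and listed in an appendix.
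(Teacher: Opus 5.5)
Your plan is the paper's proof. Necessity comes from Lemma~\ref{non}, the count $n-1=6x_3+20x_5$, and Lemma~\ref{n,3,1}(4) for $n=41$. Existence comes from Lemmas~\ref{n,3,1}(1) and \ref{n-345}(2), from Construction~\ref{6m+u} with a computer-found seed for large $n\equiv17,23\pmod{24}$, and from direct computer constructions (Appendix A) for the small orders in those two classes.

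The one step you leave open, $n=47$, needs no search. A $(47,\{3,5\},1)$-GDF would also be a $(47,\{3,4,5\},1)$-GDF, and Lemma~\ref{result345} rules that out because $\{1,47\}$ is in its exceptional list. Your hand alternative would not work: the parity argument of Lemma~\ref{non} gives nothing here, since $1+2+\cdots+23=276$ is even.

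Two minor points.
\begin{itemize}
\item The paper uses the single seed $u=65$ for both classes. The defect $33$ is odd, so $m'\equiv1\pmod4$ is also allowed by Lemma~\ref{LDjieguo}, and this gives every $n\equiv23\pmod{24}$ with $n\ge455$. Your seed $71$ has even defect $36$, which would leave $455$ and $479$ to be built directly.
\item Extra seeds cannot shorten the small list. Any seed must satisfy $u\equiv n\equiv5\pmod 6$ and $u\equiv1,7\pmod8$, so $u\equiv17,23\pmod{24}$ and hence $u\ge65$. Lemma~\ref{LDjieguo} then forces $n=6m'+u\ge7u\ge455$. Also, Lemma~\ref{origin} supplies no leave $[65]$ or $[71]$.
\end{itemize}
So the orders $65\le n\le449$ and $71\le n\le431$ must indeed be constructed by computer, except $161$, which comes from Lemma~\ref{n,3,1}(3). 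This is exactly what the paper does.
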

{\Proof} Suppose $\mathcal{B}$ is an $(n,\{3,5\},1)$-GDF on $[n]$ and the number of base blocks of sizes $3,5$ in $\mathcal{B}$ is $x,y$, respectively. Considering the list of differences for  base blocks in $\mathcal{B}$, we have
\begin{equation}\label{nece}
6x+20y=n-1,
\end{equation}
then $n\equiv 1\pmod{2}$. There is no $(n,\{3,5\},1)$-GDF for $n\in\{9,15\}$ as Equation (\ref{nece}) has no solution. According to Lemma~\ref{non}, there is no $(n,\{3,5\},1)$-GDF for $n\equiv3,5\pmod{8}$. Further,  there exists an $(n,\{3,5\},1)$-GDFs  for $n\equiv1,7\pmod{24}$ or an $n\equiv9,15\pmod{24}$ and $n\geq33$ by Lemma  \ref{n,3,1} (1) and Lemma  \ref{n-345} (2).

 Next, we consider the case of $n\equiv17,23\pmod{24}$. There is a $(65,\{3,5\},1)$-GDF by Appendix A. By Lemma \ref{LDjieguo},  there is a {\em Langford sequence} of order $m$ and defect $33$ for $m\geq65$ and $m\equiv0,1\pmod{4}$. Specifically speaking, when $m\equiv0\pmod{4}$  and $m\geq68$,   $6m+65\equiv17\pmod{24}$ and $6m+65\geq473;$ when $m\equiv1\pmod{4}$ and $m\geq65$,  $6m+65\equiv23\pmod{24}$ and $6m+65\geq455$. Therefore, there is an $(n,\{3,5\},1)$-GDF  for $n\equiv17\pmod{24}$ and $n\geq473$ or $n\equiv23\pmod{24}$ and $n\geq455$ by Construction \ref{6m+u}.

Consider the case of $n\equiv17\pmod{24}$ and $n<473$. There is no $(17,\{3,5\},1)$-GDF as Equation (\ref{nece}) has no solution. When $n=41$, Equation (\ref{nece}) has a solution $(x,y)=(0,2)$, but there is no $(41,5,1)$-GDF by Lemma \ref{n,3,1} (4). So there is no $(41,\{3,5\},1)$-GDF. A  $(161,\{3,5\},1)$-GDF exists by Lemma \ref{n,3,1} (3). For $n\in\{65,89,113,137,185,209,233,257,281,$ $305, 329, 353,377,401,425,449\}$, see Appendix A. Further, we consider the case of $n\equiv23\pmod{24}$ and $n<455$. There is no $(23,\{3,5\},1)$-GDF as Equation (\ref{nece}) has no solution. There is no $(47,\{3,4,5\},1)$-GDF by Lemma \ref{result345}, then there is no $(47,\{3,5\},1)$-GDF. For $n\in\{71,95,119,143,167,191,215,239,263,287,311,335,$ $359,383,407, 431\}$, see Appendix A.

This completes the proof.
\qed

\begin{lemma}\label{75SPGDD}
If there exists an $(m,\{3,5\},1)$-GDF$,$ then there exists a $\{3,5\}$-SPGDD of type $m^5$.
\end{lemma}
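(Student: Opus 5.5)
The plan is to apply Construction \ref{MGDD-SPMGDD} with $K=\{3,5\}$, $h=5$ and $L=\{3,5\}$, and then add one extra family of base blocks to turn the resulting semi-perfect modified design into a semi-perfect group divisible design. Starting from the given $(m,\{3,5\},1)$-GDF, Construction \ref{MGDD-SPMGDD} needs an $L$-MGDD of type $k^5$ for each block size $k\in\{3,5\}$.

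\textbf{The two ingredients.} For $k=3$ we need a $\{3,5\}$-MGDD of type $3^5$. Lemma \ref{MGDD}(1) supplies a $3$-MGDD of type $3^5$: here $n=5$ and $m=3$, so $(n-1)(m-1)=8$ is even and $n(n-1)m(m-1)=120\equiv 0\pmod 6$. For $k=5$, Lemma \ref{MGDD}(3) gives a $5$-MGDD of type $5^5$. Both are in particular $\{3,5\}$-MGDDs, so Construction \ref{MGDD-SPMGDD} yields a $\{3,5\}$-SPMGDD $\mathcal{F}$ of type $m^5$ on $S\times[m]$ with $|S|=5$.

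\textbf{Completing to an SPGDD.} The remark at the end of Section 2.2 says that $\mathcal{F}\cup\{S\times\{0\}\}$ is a $(K\cup\{n\})$-SPGDD of type $m^n$. Here $n=|S|=5$, so adding the single block $S\times\{0\}$ gives a $\{3,5\}\cup\{5\}=\{3,5\}$-SPGDD of type $m^5$. Concretely, the SPMGDD covers $[m]\setminus\{0\}$ for each ordered pair $i\neq j$ and nothing for $i=j$. The new block $S\times\{0\}$ contributes exactly the difference $0$ for each $i\neq j$ and nothing for $i=j$. Together they cover all of $[m]$, which is exactly the SPGDD condition.

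\textbf{Main obstacle.} The only real work is checking the hypotheses of Construction \ref{MGDD-SPMGDD}: that the $3$-MGDD of type $3^5$ exists (its parameters are checked above) and that the type is $k^h$ with $h=5$ for every block size $k$ of the GDF. Once these hold, the result follows directly.
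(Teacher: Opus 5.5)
Your proposal is correct and follows the paper's proof step for step. The paper likewise invokes Lemma \ref{MGDD}(1),(3) to get $\{3,5\}$-MGDDs of types $3^5$ and $5^5$, applies Construction \ref{MGDD-SPMGDD} to obtain a $\{3,5\}$-SPMGDD of type $m^5$ on $A\times[m]$, and then adjoins the single block $A\times\{0\}$; your explicit parameter check for type $3^5$ and your verification that $S\times\{0\}$ supplies exactly the missing difference $0$ for each pair $i\neq j$ are details the paper leaves implicit.
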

{\Proof} Suppose there is an $(m,\{3,5\},1)$-GDF. By Lemma \ref{MGDD} (1) (3), there is a $\{3,5\}$-MGDD of type~$3^5$ and a $\{3,5\}$-MGDD of type~$5^5$.  Applying  Construction \ref{MGDD-SPMGDD}, we get a $\{3,5\}$-SPMGDD of type $m^5$ on $A\times [m]$ with the set of base blocks  $\mathcal{F}$, where $A$ is a set of 5 points.
  Further, set~$\mathcal{C}=\mathcal{F}\cup\{A\times\{0\}\}$, then~$\mathcal{C}$ forms a $\{3,5\}$-SPGDD of type $m^5$.
\qed

\begin{lemma}\label{yiweierweiGDF}
If there exists an $(n,\{3,5\},1)$-GDF and an $(m,\{3,5\},1)$-GDF, then there exists an $(n\times m,\{3,5\},1)$-GDF.
\end{lemma}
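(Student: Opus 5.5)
Apply Construction \ref{N-NxM-GDP}, regarding the $(n,\{3,5\},1)$-GDF as an $(n,\{3,5\},1)$-GDP with leave $[1]$. This requires an $L$-SPGDD of type $m^k$ for each $k\in\{3,5\}$, with $L=\{3,5\}$.

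\textbf{Type $m^3$.} Since $m\equiv1,7\pmod 8$ is odd, Lemma \ref{345PDM}(1) gives a PDM$(3,m)$, which is equivalent to a $3$-SPGDD of type $m^3$. This is in particular a $\{3,5\}$-SPGDD of type $m^3$.

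\textbf{Type $m^5$.} Because an $(m,\{3,5\},1)$-GDF exists, Lemma \ref{75SPGDD} supplies a $\{3,5\}$-SPGDD of type $m^5$.

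Construction \ref{N-NxM-GDP} then yields an $(n\times m,\{3,5\},1)$-GDP with leave $[1]\times[m]=\{0\}\times[m]$. Only the differences $(0,y)$ with $y\in[m]\setminus\{0\}$ remain uncovered.

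\textbf{Filling the leave.} Apply Construction \ref{budong} with $H_1=[1]$, $H_2=[m]$, $r_1=r_2=1$. This needs a $([1]\times[m],\{3,5\},1)$-GDF, which is exactly an $(m,\{3,5\},1)$-GDF and is given by hypothesis. Since the filler is a GDF, Construction \ref{budong} produces an $(n\times m,\{3,5\},1)$-GDF.

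\textbf{Main obstacle.} The real work is in checking that Construction \ref{N-NxM-GDP} covers every difference $(x,y)$ with $x\neq0$ exactly once. This rests on the SPGDD condition $\Delta_{ij}=[m]$ for $i\neq j$. Here the groups are indexed by the points of each base block, and for each ordered pair the first coordinate $a_i-a_j$ pairs with all second coordinates in $[m]$. Since that construction is already established, the argument is just the assembly above.
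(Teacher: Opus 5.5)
Your proof is correct and follows the paper's argument. You obtain the $3$-SPGDD of type $m^3$ from a PDM$(3,m)$ and the $\{3,5\}$-SPGDD of type $m^5$ from Lemma \ref{75SPGDD}, apply Construction \ref{N-NxM-GDP} to reach leave $[1]\times[m]$, and fill that leave with the $(m,\{3,5\},1)$-GDF via Construction \ref{budong}; the paper does exactly this, only more tersely.
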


{\Proof} Suppose there is an $(n,\{3,5\},1)$-GDF and an $(m,\{3,5\},1)$-GDF. By Lemma \ref{75SPGDD}, there is a $\{3,5\}$-SPGDD of type $m^5$. By Lemma \ref{345PDM} (1), there is a PDM$(3,m)$, i.e., a 3-SPGDD of type $m^3$. Therefore, there exists an $(n\times m, \{3,5\},1)$-GDF from Constructions \ref{budong} and  \ref{N-NxM-GDP}.
\qed

In what follows, we discuss the existence of $(n\times m,\{3,5\},1)$-GDFs according to the different values of $n,m$ by applying recursive constructions given in Section 3. For each $(n,m)$, we  need to find the appropriate $(n,3,1)$-GDPs with leave $[h_1]^{r_1}$,   $(m,3,1)$-GDPs with leave $[h_2]^{r_2}$ and   $(h_1\times h_2,\{3,5\},1)$-GDFs.

By Lemma \ref{non}, the necessary condition for the existence of an $(n\times m,\{3,5\},1)$-GDF is  $n,m\equiv1,7\pmod{8}$. We classify $n,m$ according to the values modulo $24$.

\begin{lemma}\label{3-GDF}
When $n,m\equiv1,7\pmod{24}$~or~$n,m\equiv17,23\pmod{24}$, there exists an $(n\times m,\{3,5\},1)$-GDF.
\end{lemma}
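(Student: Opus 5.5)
The plan is to reduce both cases to Lemma \ref{14}, which gives an $(n\times m,3,1)$-GDF whenever $nm\equiv1\pmod{6}$ and $n,m\equiv1,7,17,23\pmod{24}$. As noted at the start of this section, an $(n\times m,3,1)$-GDF is in particular an $(n\times m,\{3,5\},1)$-GDF. So it suffices to verify the congruence $nm\equiv1\pmod 6$ in each case.

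When $n,m\equiv1,7\pmod{24}$, both $n$ and $m$ are odd and $\equiv1\pmod 3$. Hence $n,m\equiv1\pmod 6$ and $nm\equiv1\pmod 6$.

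When $n,m\equiv17,23\pmod{24}$, both are odd and $\equiv2\pmod3$, so $n,m\equiv5\pmod 6$. Then $nm\equiv25\equiv1\pmod 6$.

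In either case the residue condition of Lemma \ref{14} holds directly. Lemma \ref{14} therefore yields an $(n\times m,3,1)$-GDF, which is the required $(n\times m,\{3,5\},1)$-GDF.

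We must also check that none of the small exceptional pairs of Theorem \ref{result35} falls into these classes. These pairs are $\{1,p\}$ with $p\in\{9,15,17,23,41,47\}$. For $p=9,15$ we have $p\equiv9,15\pmod{24}$, so these pairs are not in the classes considered here. For $p=17,23,41,47$ we have $p\equiv17,23\pmod{24}$, but $1\equiv1\pmod{24}$, so the pair mixes the two classes and is excluded from the hypothesis. The two residue classes in the lemma are never mixed, so no conflict arises.

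There is essentially no obstacle here. The only point needing care is this class bookkeeping: the mixed case $n\equiv1,7$, $m\equiv17,23\pmod{24}$ has $nm\equiv5\pmod 6$, where no $3$-GDF exists, and that case is left for the later lemmas.
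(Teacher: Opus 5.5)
Your proof is correct and matches the paper's argument: you check $nm\equiv1\pmod 6$ in both residue classes, invoke Lemma~\ref{14}, and note that an $(n\times m,3,1)$-GDF is trivially an $(n\times m,\{3,5\},1)$-GDF. The consistency check against the exceptional pairs of Theorem~\ref{result35} is harmless but not logically needed.
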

{\Proof}  When~$n,m\equiv1,7\pmod{24}$~or~$n,m\equiv17,23\pmod{24}$, we have $nm\equiv1\pmod{6}$. By Lemma \ref{14}, there is an $(n\times m,3,1)$-GDF, so there exists an $(n\times m,\{3,5\},1)$-GDF.
\qed

\begin{lemma}
When $n\equiv 1,7\pmod{24}$, $m \equiv 9,15,17,23\pmod{24}$, there exists an $(n\times m, \{3,5\},1)$-GDF.
\end{lemma}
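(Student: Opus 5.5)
The plan is to combine Lemma \ref{yiweierweiGDF} with the direct-product constructions of Section 3, treating separately the values of $m$ for which no $(m,\{3,5\},1)$-GDF exists.

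\textbf{Generic case.} By Lemma \ref{n,3,1} (1), an $(n,3,1)$-GDF exists for every $n\equiv1,7\pmod{24}$; this includes $n=1$ trivially, with the empty family. By Lemma \ref{yiwei}, an $(m,\{3,5\},1)$-GDF exists for every $m\equiv9,15,17,23\pmod{24}$ with $m\notin\{9,15,17,23,41,47\}$. For such $n,m$, Lemma \ref{yiweierweiGDF} yields an $(n\times m,\{3,5\},1)$-GDF directly. This leaves the pairs with $m\in\{9,15,17,23,41,47\}$, and among those only $n\neq1$, since $\{1,p\}$ with these $p$ is a genuine exception in Theorem \ref{result35}.

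\textbf{Exceptional $m$, large $n$.} For $m\in E:=\{9,15,17,23,41,47\}$ and $n\equiv1,7\pmod{24}$ with $n\geq49$, I would reverse the roles of the two coordinates and use leaves on the $n$-side. Lemma \ref{origin} (1) gives an $(n,3,1)$-GDP with leave $[7]$ for $n\geq49$. Apply Construction \ref{N-NxM-GDP} with $K=\{3\}$ and a PDM$(3,m)$ from Lemma \ref{345PDM} (1); this gives an $(n\times m,3,1)$-GDP with leave $[7]\times[m]$. Construction \ref{budong} then reduces the problem to the existence of a $(7\times m,\{3,5\},1)$-GDF for each $m\in E$.

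\textbf{Small cases.} The remaining cases are $n\in\{7,25,31\}$ with $m\in E$, together with the reduction targets $7\times m$, which are already contained in this list. For $n=25$ and $n=31$, I would alternatively try Construction \ref{budong} with a $(25,3,1)$-GDP or $(31,3,1)$-GDP having a small leave, reducing to $7\times m$ or $h\times m$ for small $h$. If no such GDP is available from the stated results, these finitely many $(n\times m,\{3,5\},1)$-GDFs (namely $n\in\{7,25,31\}$, $m\in E$) must be produced by computer search and listed in an appendix.

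The main obstacle is this finite list of base cases, above all the $7\times m$ families. Since $7\times 9$, $7\times15$ and similar GDFs are small, a direct search (for instance assuming a suitable automorphism, or prescribing a few $5$-blocks and then filling with triples) is what I would try first. Counting differences shows $6x+20y=7m-1$ is solvable in each case; for example, $m=9$ gives $62=6x+20y$, so $y=1$ and $x=7$.
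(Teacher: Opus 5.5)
Your generic case and your handling of the exceptional $m\in\{9,15,17,23,41,47\}$ with $n\geq 49$ coincide with the paper's proof. The paper likewise uses Lemma \ref{yiweierweiGDF}, then the leave-$[7]$ GDP of Lemma \ref{origin}(1), a PDM$(3,m)$, Construction \ref{N-NxM-GDP} and Construction \ref{budong}, ending at $(7\times m,\{3,5\},1)$-GDFs. Your remark that $n=1$ must be excluded for these $m$ is also correct. The lemma as literally stated fails there, and the paper's proof tacitly treats only $n=7$, $n\in\{25,31\}$ and $n\geq 49$.

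You differ from the paper on the finite residue, where the paper adds a second round of recursion so that far less has to be searched.

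\textbf{The cases $n\in\{25,31\}$.} The paper uses $(n,3,1)$-GDPs with leave $[7]^3$. These are just three, resp.\ four, triples covering $[1,12]\setminus\{3,6,9\}$, resp.\ $[1,15]\setminus\{3,6,9\}$. For instance, the difference triples $\{1,10,11\},\{4,8,12\},\{2,5,7\}$ and $\{1,14,15\},\{2,10,12\},\{4,7,11\},\{5,8,13\}$ work. Construction \ref{N-NxM-GDP} then gives leave $[7]^3\times[m]$, and Construction \ref{budong} only needs an $([7]\times[m],\{3,5\},1)$-GDF to fill it. So these cases again reduce to $7\times m$, and no $25\times m$ or $31\times m$ search is needed.

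\textbf{The cases $m\in\{41,47\}$.} The paper takes an $(m,\{3,5\},1)$-GDP with leave $[9]^3$. Via Construction \ref{N-NxM-GDP}, with the coordinates swapped, it combines this with a PDM$(3,7)$ and the $\{3,5\}$-SPGDD of type $7^5$ from Lemma \ref{75SPGDD}. The result is a $(7\times m,\{3,5\},1)$-GDP with leave $[7]\times[9]^3$, which the $(7\times 9,\{3,5\},1)$-GDF fills.

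Hence the only two-dimensional computer-found inputs are the $(7\times m,\{3,5\},1)$-GDFs for $m\in\{9,15,17,23\}$. Your route avoids this extra layer, but as written it is only a conditional reduction. In the worst case it rests on direct searches for eighteen two-dimensional GDFs, including large ones such as $31\times 47$, whose existence you have not established. The paper's route keeps the computational input small and easy to verify.
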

 \Proof By Lemmas \ref{n,3,1}, \ref{yiwei} and \ref{yiweierweiGDF}, there exists an $(n\times m,\{3,5\},1)$-GDF for $n\equiv 1,7\pmod{24}$ and  $m \equiv 9,15,17,23\pmod{24}, m\notin\{9,15,17,23,41,47\}$.

When $n\equiv 1,7\pmod{24}$ and $n\geq 49$, there is an $(n,3,1)$-GDP with leave $[7]$ by Lemma \ref{origin} (1). There is an $(n,3,1)$-GDP with leave $[7]^3$ for $n\in \{25,31\}$ by Appendix D and a $(7,3,1)$-GDP with leave $[7]$. When $m\in\{9,15,17,23,41,47\}$, there is a PDM$(3,m)$,
 i.e., a $3$-SPGDD of type $m^{3}$ by Lemma \ref{345PDM} (1). Applying Construction \ref{N-NxM-GDP}, we get an $(n\times m,3,1)$-GDP with leave $[7]^{r}\times [m]$, where $r=1$ or $3$.  When $m\in\{9,15,17,23\}$, there exists  a $(7\times m,\{3,5\},1)$-GDF by Appendix B. When $m\in\{41,47\}$,  there exists  an $(m,\{3,5\},1)$-GDP with leave $[9]^{3}$ by Appendix D. In addition, there is a PDM$(3,7),$ i.e., a $3$-SPGDD of type $7^{3}$ by Lemma \ref{345PDM} (1);
 there is a $\{3,5\}$-SPGDD of type $7^{5}$ by Lemma \ref{75SPGDD}. Applying Construction \ref{N-NxM-GDP}, we get a  $(7\times m,3,1)$-GDP with leave $[7]\times [9]^{3}$. Further, there is a $(7\times9, \{3,5\},1)$-GDF by Appendix B.  Applying Construction \ref{budong}, there exists  a $(7\times m,\{3,5\},1)$-GDF for $m\in\{41,47\}$.

From the above, the conclusion holds.
\qed

\begin{lemma}\label{zuihou35}
When $n\equiv 9,15\pmod{24}$, $m \equiv 9,15,17,23\pmod{24}$, there exists an $(n\times m, \{3,5\},1)$-GDF.
\end{lemma}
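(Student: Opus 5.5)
The plan follows the pattern of the preceding lemma. \textbf{Generic case.} If neither $n$ nor $m$ lies in the exceptional set $E=\{9,15,17,23,41,47\}$, then Lemma \ref{yiwei} gives both an $(n,\{3,5\},1)$-GDF and an $(m,\{3,5\},1)$-GDF. Lemma \ref{yiweierweiGDF} then yields the $(n\times m,\{3,5\},1)$-GDF directly. Note that $n\equiv 9,15\pmod{24}$ and $n\notin\{9,15\}$ means $n\ge 33$. So what remains is the case where $n\in\{9,15\}$ or $m\in E$.

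\textbf{One small parameter.} Suppose exactly one parameter is small, say $n\in\{9,15\}$ with $m\notin E$, or $m\in E$ with $n\ge 33$. In each case I want to peel off a small core.
\begin{itemize}
\item For large $n\equiv 9,15\pmod{24}$, use Lemma \ref{origin}(4): there is an $(n,3,1)$-GDP with leave $[9]$ for $n\ge 63$ (or leave $[15]$ for $n\ge105$). For the few small values such as $n\in\{33,39,57,\dots\}$, I would quote a GDP with leave $[9]^r$ or $[15]^r$ from a computer search (Appendix D style).
\item Combine this with a PDM$(3,m)$ from Lemma \ref{345PDM}(1) via Construction \ref{N-NxM-GDP}. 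This gives an $(n\times m,3,1)$-GDP with leave $[h]^r\times[m]$, where $h\in\{9,15\}$.
\item Fill the leave with an $(h\times m,\{3,5\},1)$-GDF using Construction \ref{budong}.
\end{itemize}
This reduces the problem to the existence of $(h\times m,\{3,5\},1)$-GDFs with $h\in\{9,15\}$. Symmetrically, when $m\in E$ and $n$ is large, it reduces to $(n'\times m,\{3,5\},1)$-GDFs for the small core $n'$.

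\textbf{Small cores.} Consider $9\times m$ and $15\times m$ for $m\equiv 9,15,17,23\pmod{24}$ large. Take an $(m,\{3,5\},1)$-GDF, or an $(m,3,1)$-GDP with leave $[h_2]$ from Lemma \ref{origin}(4),(6). Use Construction \ref{N-NxM-GDP} in the other direction with $\{3,5\}$-SPGDDs of type $9^5,15^5$ from Lemma \ref{75SPGDD}. Alternatively, use the explicit $3$-SPGDDs of type $9^5$ and $15^5$ from Examples \ref{9SPGDD} and \ref{15SPGDD} together with PDM$(3,\cdot)$. Since $9$ and $15$ admit no $(\cdot,\{3,5\},1)$-GDF, I would instead use $(9,3,1)$- or $(15,3,1)$-GDPs with small leave, e.g.\ leave $[3]$, with blocks of size $3$ only. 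That yields a $(9\times m,3,1)$-GDP with leave $[3]\times[m]$. This in turn needs a $(3\times m,\{3,5\},1)$-GDF, which cannot exist by Lemma \ref{non}. So a different leave must be chosen; I would therefore aim for leaves of the form $[9]\times[h_2]$ or $[15]\times[h_2]$, using the GDP on the $m$-side, with $h_2\in\{9,15,17,23\}$.

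\textbf{Main obstacle.} Everything finally reduces to a finite list of base cases:
\[
(h_1\times h_2,\{3,5\},1)\text{-GDFs with } h_1\in\{9,15\},\ h_2\in\{9,15,17,23,41,47\},
\]
plus a few small $n\in\{33,39,\dots\}$ paired with $E$. These must come from computer search, as in Appendix B, possibly after a further reduction of $41,47$ via the $(m,\{3,5\},1)$-GDP with leave $[9]^3$ from Appendix D. Producing these direct constructions, especially $15\times 41$, $15\times 47$ and $9\times 15$-type cores, is the hard step. I would first try to reduce everything to the $9\times9$, $9\times15$, $15\times15$, $9\times17$, $9\times23$, $15\times17$ and $15\times23$ cores by using leave $[9]^3$ GDPs for $41$ and $47$.
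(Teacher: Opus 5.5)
Your plan is correct and is essentially the paper's proof. It uses the same pieces:
\begin{itemize}
\item the generic case via Lemmas~\ref{yiwei} and~\ref{yiweierweiGDF};
\item an $(n\times m,3,1)$-GDP with leave $[9]^r\times[m]$ (from a PDM$(3,m)$) for $n\ge 33$ and $m\in\{9,15,17,23,41,47\}$;
\item the trivial $(n,3,1)$-GDP with leave $[n]$ for $n\in\{9,15\}$, combined with an $m$-side GDP with leave $[h_2]^r$;
\item for $m=41,47$, the $(m,\{3,5\},1)$-GDPs with leave $[9]^3$ together with the $3$-SPGDDs of Examples~\ref{9SPGDD} and~\ref{15SPGDD};
\item the Appendix~B cores $9\times h$ and $15\times h$ with $h\in\{9,15,17,23\}$.
\end{itemize}

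Two points need tightening. First, Lemma~\ref{75SPGDD} cannot supply SPGDDs of type $9^5$ or $15^5$, because no $(9,\{3,5\},1)$-GDF or $(15,\{3,5\},1)$-GDF exists; only the Examples work there. Second, $m\in\{65,71,89,95,113\}$ lie below the threshold of Lemma~\ref{origin}(6). They need separately found GDPs with leave $[17]^2$, $[17]^3$ or $[23]^3$, just as $33,39,57$ need leave $[9]^3$.
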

 \Proof  By Lemmas  \ref{yiwei} and \ref{yiweierweiGDF}, there exists an $(n\times m,\{3,5\},1)$-GDF for $n\equiv 9,15\pmod{24}, n\notin\{9,15\}$ and $m \equiv 9,15,17,23\pmod{24}, m\notin\{9,15,17,23,41,47\}$.

Next, we  consider the existence of $(n\times m, \{3,5\},1)$-GDFs for $n\equiv 9,15\pmod{24}$ and $m \in\{ 9,15,17,23,$ $41, 47\}$,  or $n\in\{9,15\}$ and $m \equiv 9,15,17,23\pmod{24}, m\notin\{9,15,17,23,41,47\}$.\vspace{0.1cm}

$(1)$ When $n\equiv 9,15\pmod{24}$ and $n\geq 63$, there is an $(n,3,1)$-GDP with leave $[9]$ by Lemma \ref{origin} (4).
There is an $(n,3,1)$-GDP with leave $[9]^3$ for $n\in \{33,39,57\}$ by Appendix D and a $(9,3,1)$-GDP with leave $[9]$.

 When $m\in\{9,15,17,23,41,47\}$, there is a PDM$(3,m),$
 i.e., a $3$-SPGDD of type $m^{3}$ by Lemma \ref{345PDM} (1). Applying Construction \ref{N-NxM-GDP}, we get an $(n\times m,3,1)$-GDP with leave
 $[9]^{r}\times [m]$,  where $r=1$ or $3$.  There exists a $(9\times m,\{3,5\},1)$-GDF for $m\in\{9,15,17,23\}$ by Appendix B.
  For $m\in\{41,47\}$, there exists an $(m,\{3,5\},1)$-GDP with leave $[9]^{3}$ by Appendix D. In addition,
 there is a PDM$(3,9)$, i.e., a $3$-SPGDD of type $9^{3}$ by Lemma \ref{345PDM} (1);
 there is a $3$-SPGDD of type $9^{5}$ by Example \ref{9SPGDD}. Applying Construction \ref{N-NxM-GDP}, we get a  $(9\times m,3,1)$-GDP with leave $[9]\times [9]^{3}$. A $(9\times9, \{3,5\},1)$-GDF exists by Appendix B. Applying Construction \ref{budong},  there exists a   $(9\times m, \{3,5\},1)$-GDF for $m\in\{41,47\}$.
Therefore, there exists an $(n\times m,\{3,5\},1)$-GDF for   $n\equiv 9,15\pmod{24}$, $n\neq 15$ and $m \in\{ 9,15,17,23,41, 47\}$ by applying Construction \ref{budong}.

When $n=15$, there is a  $(15,3,1)$-GDP with leave $[15]$. There exists a $(15\times m,\{3,5\},1)$-GDF for $m\in\{9,15,17,23\}$ by Appendix B.  For $m\in\{41,47\}$, there exists an $(m,\{3,5\},1)$-GDP with leave $[9]^{3}$ by Appendix D.
 There is a PDM$(3,15)$, i.e., a $3$-SPGDD of type $15^{3}$ by Lemma \ref{345PDM} (1); there is a $3$-SPGDD of type $15^{5}$ by Example \ref{15SPGDD}.  Applying Construction \ref{N-NxM-GDP}, we get a  $(15\times m,3,1)$-GDP with leave $[15]\times [9]^{3}$. A  $(9\times15, \{3,5\},1)$-GDF exists by Appendix B.  Applying Construction \ref{budong},  there exists a   $(15\times m, \{3,5\},1)$-GDF for $m\in\{41,47\}$.\vspace{0.1cm}

$(2)$ When $n\in\{9,15\}$, there is an $(n,3,1)$-GDP with leave $[n]$. When $m \equiv 9,15\pmod{24}$ and $m\geq 63$, there is an $(m,3,1)$-GDP with leave $[9]$ by Lemma \ref{origin} (4).  There is an $(m,3,1)$-GDP with leave $[9]^3$ for $m\in \{33,39,57\}$ by Appendix D.
There exists a $(9\times9, \{3,5\},1)$-GDF and a $(9\times15, \{3,5\},1)$-GDF by Appendix B. So there exists an $(n\times m,\{3,5\},1)$-GDF for   $n\in\{9,15\}$ and $m \equiv 9,15\pmod{24}, m\notin\{9,15\}$  by applying Constructions  \ref{nm-nxm,3,1} and \ref{budong}.

 When $m \equiv 17,23\pmod{24}$ and $m\geq 119$, there is an $(m,3,1)$-GDP with leave $[17]$ by Lemma \ref{origin} (6).  There is a  $(65,3,1)$-GDP with leave $[17]^2$,  an $(m,3,1)$-GDP with leave $[17]^3$ for  $m\in\{71,113\}$, an $(m,3,1)$-GDP with leave $[23]^{3}$ for
  $m\in\{89,95\}$ by Appendix D.   Further, there exists a
$(9\times17, \{3,5\},1)$-GDF, a $(15\times17, \{3,5\},1)$-GDF, a $(9\times23, \{3,5\},1)$-GDF and a $(15\times23, \{3,5\},1)$-GDF by Appendix B. Therefore, there exists an $(n\times m,\{3,5\},1)$-GDF for   $n\in\{9,15\}$ and $m \equiv 17,23\pmod{24}, m\notin\{17,23,41,47\}$  by applying Constructions  \ref{nm-nxm,3,1} and \ref{budong}.

This completes the proof.\qed\vspace{0.2cm}

\noindent {\bf \emph{Proof of Theorem} \ref{result35}}  Combining the results of Lemmas \ref{non}-\ref{yiwei}, \ref{3-GDF}-\ref{zuihou35},  the theorem is proved.\qed

\section{$(n\times m, \{3,6\}, 1)$-GDFs}
In this section, we study the existence   of $(n\times m, \{3,6\}, 1)$-GDFs. We first consider the case of $m=1$. Some $(n\times m, \{3,6\}, 1)$-GDFs can be obtained from $(n ,\{3,6\}, 1)$-GDFs by applying the constructions given in Section $3$. Note that an $(n\times m, 3, 1)$-GDF is obviously also an $(n\times m, \{3,6\}, 1)$-GDF.

\begin{lemma}\label{3.6}
There is an $(n, \{3,6\}, 1)$-GDF if and only if $n\equiv 1\pmod{6}$ and $n\notin \{13,19,37\}$.
\end{lemma}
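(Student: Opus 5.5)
We prove necessity by counting and sufficiency by combining known families, one Langford-type recursion, and a finite computer search, following the pattern of Lemma \ref{yiwei}.

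\textbf{Necessity.} Let the GDF have $x$ base blocks of size $3$ and $y$ of size $6$. Counting differences gives
$$6x+30y=n-1,$$
so $n\equiv1\pmod 6$.

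For the exceptions we use this equation together with known nonexistence results:
\begin{itemize}
\item[(i)] When $n=13$, the only solution is $(x,y)=(2,0)$. Then the family is a $(13,3,1)$-GDF, which does not exist by Lemma \ref{n,3,1}(1), since $13\not\equiv1,7\pmod{24}$.
\item[(ii)] When $n=19$, the solutions are $(3,0)$ and $(-2,1)$. Only $(3,0)$ is admissible, and $19\not\equiv 1,7\pmod{24}$, so no GDF exists.
\item[(iii)] When $n=37$, the solutions are $(6,0)$ and $(1,1)$. The case $(6,0)$ is excluded by Lemma \ref{n,3,1}(1). The case $(1,1)$ is exactly a $(37,\{3,6^*\},1)$-GDF, excluded by Lemma \ref{n-345}(3) since $37<43$.
\end{itemize}
Note that $n=7$ is fine, because a $(7,3,1)$-GDF exists.

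\textbf{Sufficiency.} If $n\equiv1,7\pmod{24}$, an $(n,3,1)$-GDF from Lemma \ref{n,3,1}(1) already works. So assume $n\equiv13,19\pmod{24}$. In this case the number $x=(n-1-30y)/6$ must differ in parity from what pure triples allow, so at least one $6$-block is required. Lemma \ref{n-345}(3) gives an $(n,\{3,6^*\},1)$-GDF for all $n\equiv1\pmod6$ with $n\ge43$. Since $6\cdot 0+30\cdot1=30\neq n-1$ in general, such a family has $x>0$ triples plus one $6$-block, which is admissible. This settles everything except $n\in\{13,19,37\}$, which are precisely the exceptions.

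If one prefers to avoid leaning on Lemma \ref{n-345}(3) for large $n$, Construction \ref{6m+u} gives an alternative. Start from a small seed such as a $(u,\{3,6\},1)$-GDF with $u=43$ or $u=49$. Apply a Langford sequence of defect $(u+1)/2$ (Lemma \ref{LDjieguo}) to cover $6m+u$ for all admissible $m$. The remaining finitely many values would then be handled by computer search and listed in an appendix.

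\textbf{Main obstacle.} The only delicate point is the necessity argument for $n=37$ with $(x,y)=(1,1)$, which depends on the exact range in Lemma \ref{n-345}(3). If that lemma were unavailable, we would instead need a direct exhaustive argument. Such an argument would place the $6$-block in $[37]$, requiring its $15$ positive differences to be distinct, leaving $3$ differences that form $\{a,b,a+b\}$. That search is short enough to do by hand or computer.
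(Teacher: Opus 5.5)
Your proof is correct and is essentially the paper's argument: the counting equation $6x+30y=n-1$, Lemma \ref{n,3,1}(1) and Lemma \ref{n-345}(3) to exclude $n=13,19,37$, and the same two lemmas for existence. You split by $n \bmod 24$ where the paper splits into $n\ge 43$ and $n\in\{7,25,31\}$, which comes to the same thing. One small aside: the reason a $6$-block is needed when $n\equiv 13,19\pmod{24}$ is not the parity of $x$; it is that $1+2+\cdots+\frac{n-1}{2}$ is odd (equivalently $x\equiv 2,3\pmod 4$). Your argument does not depend on that remark, though.
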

{\Proof} Suppose  $\mathcal{B}$ is an $(n,\{3,6\},1)$-GDF on $[n]$, and the number of base blocks of sizes $3,6$ in $\mathcal{B}$ is $x,y$, respectively. Considering the list of differences of  base blocks in $\mathcal{B}$, we have
\begin{align}\label{36}
6x+30y=n-1,
\end{align}
then $n\equiv 1\pmod{6}$. By Lemma \ref{n-345} (3), there is an  $(n, \{3,6\}, 1)$-GDF for $n\equiv 1\pmod{6}$ and $n\geq 43$. Next, we consider the case of $n\in\{7,13,19,25,31,37\}$.

 When $n\in\{13,19\}$, from   Equation (\ref{36}), there are only the base blocks of size $3$, so there is no $(n,3,1)$-GDF by Lemma \ref{n,3,1} (1). When $n=37$, Equation ($\ref{36}$) has two solutions, $(x,y)=(6,0)$ and $(1,1)$. There is no $(37, 3, 1)$-GDF by Lemma \ref{n,3,1} (1) and there is no  $(37, \{3,6\}, 1)$-GDF by Lemma \ref{n-345} (3).
 When $n\in\{7,25,31\}$,   an $(n,3,1)$-GDF exists by Lemma \ref{n,3,1} (1), then an $(n, \{3,6\},1)$-GDF exists.
So, the conclusion holds. \qed

\begin{lemma}
If there exists an $(n\times m, \{3,6\}, 1)$-GDF, then $n,m\equiv 1,7,13,19\pmod{24}$ or $n,m\equiv 5,11,17,23\pmod{24}$.
\end{lemma}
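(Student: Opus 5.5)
The plan is a counting argument on the number of differences, followed by a translation of the resulting congruence into residues modulo $24$.

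\textbf{Step 1: the counting identity.} Suppose $\mathcal{B}$ is an $(n\times m,\{3,6\},1)$-GDF on $[n]\times[m]$, with $x$ base blocks of size $3$ and $y$ base blocks of size $6$. A block of size $k$ contributes $k(k-1)$ differences to $\Delta\mathcal{B}$. These differences cover every element of $[n]\times[m]$ other than $(0,0)$ exactly once. This gives, as in Equation (\ref{36}),
$$6x+30y=nm-1.$$
Note that $n$ and $m$ are odd by the definition of $[n]$ and $[m]$. The identity then yields $nm\equiv1\pmod 6$, and in particular $nm\equiv1\pmod 3$.

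\textbf{Step 2: translating $nm\equiv1\pmod 3$.} Neither $n$ nor $m$ can be divisible by $3$, since otherwise $nm\equiv 0\pmod 3$. Since $1\cdot1\equiv 2\cdot 2\equiv1\pmod 3$ while $1\cdot 2\equiv 2\pmod 3$, we must have either $n\equiv m\equiv1\pmod 3$ or $n\equiv m\equiv2\pmod 3$.

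\textbf{Step 3: combining with oddness modulo $24$.} The residues modulo $24$ that are odd and prime to $3$ are $1,5,7,11,13,17,19,23$.
\begin{itemize}
\item Those congruent to $1$ modulo $3$ are $1,7,13,19$.
\item Those congruent to $2$ modulo $3$ are $5,11,17,23$.
\end{itemize}
Hence either $n,m\equiv1,7,13,19\pmod{24}$ or $n,m\equiv5,11,17,23\pmod{24}$, as claimed.

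There is no real obstacle here. The only point needing care is to observe that the two coordinates must lie in the same class modulo $3$; the condition on $nm$ alone does not constrain $n$ and $m$ individually until this is made explicit.
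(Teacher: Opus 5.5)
Your proposal is correct and follows the paper's proof: the paper likewise derives $6x+30y=nm-1$, hence $nm\equiv1\pmod 6$, and then classifies $n,m$ modulo $24$. The paper states that last step without detail, and your explicit observation that $n$ and $m$ must lie in the same nonzero residue class modulo $3$ is exactly the reasoning it leaves implicit.
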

{\Proof} Suppose   $\mathcal{B}$ is an $(n\times m, \{3,6\}, 1)$-GDF on $[n]\times [m]$ and the number of base blocks of sizes $3,6$ in $\mathcal{B}$ is $x,y$, respectively. Considering the list of differences of  base blocks in $\mathcal{B}$, we have
\begin{align}\label{erwei36}
6x+30y=nm-1,
\end{align}
then $nm\equiv 1\pmod{6}$. We classify $n$ and $m$ according to the values modulo $24$, then $n,m\equiv 1,7,13,19\pmod{24}$ or $n,m\equiv 5,11,17,23\pmod{24}$.
\qed

\begin{lemma}
When $n,m \equiv 1,7,13,19 \pmod{24}$, there exists an $(n\times m, \{3,6\}, 1)$-GDF.
\end{lemma}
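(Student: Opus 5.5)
The plan is to split according to whether at least one of $n,m$ is $\equiv 1,7\pmod{24}$ or both are $\equiv 13,19\pmod{24}$. By symmetry ($(n\times m)$ versus $(m\times n)$) we may swap the two coordinates freely.

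\emph{Case 1: $n,m\equiv 1,7\pmod{24}$.} Here $nm\equiv1\pmod 6$ and both coordinates lie in $\{1,7,17,23\}\pmod{24}$, so Lemma~\ref{14} gives an $(n\times m,3,1)$-GDF directly. This is also a $\{3,6\}$-GDF.

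\emph{Case 2: $n\equiv 13,19\pmod{24}$ and $m\equiv 1,7\pmod{24}$.} If $m=1$ we use Lemma~\ref{3.6} directly, so assume $m\geq 7$; then Lemma~\ref{n,3,1}(1) gives an $(m,3,1)$-GDF. I would mimic Lemmas~\ref{75SPGDD} and~\ref{yiweierweiGDF}.
\begin{itemize}
\item Lemma~\ref{MGDD}(1) gives a $3$-MGDD of type $3^6$: with $n=6$, $m=3$ we have $(n-1)(m-1)=10$ even and $6\cdot5\cdot3\cdot2\equiv0\pmod 6$.
\item Construction~\ref{MGDD-SPMGDD}, applied to the $(m,3,1)$-GDF and this MGDD, yields a $3$-SPMGDD of type $m^6$.
\item Adding the block $S\times\{0\}$ turns it into a $\{3,6\}$-SPGDD of type $m^6$.
\item Lemma~\ref{345PDM}(1) supplies a PDM$(3,m)$, i.e.\ a $3$-SPGDD of type $m^3$.
\end{itemize}
Then take an $(n,\{3,6\},1)$-GDF from Lemma~\ref{3.6} (available for $n\notin\{13,19,37\}$). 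Construction~\ref{N-NxM-GDP} gives an $(n\times m,\{3,6\},1)$-GDP with leave $[1]\times[m]$. Filling this leave with an $(m,3,1)$-GDF via Construction~\ref{budong} gives the GDF.

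For the exceptional $n\in\{13,19,37\}$, I would instead start from an $(n,3,1)$-GDP with leave $[n]$ (trivial), or with a smaller leave $[h]^r$ taken from the appendix for $n=37$. I would combine it with a $3$-SPGDD of type $m^3$ via Construction~\ref{N-NxM-GDP}, getting leave $[h]^r\times[m]$. The hole is then filled by a $(h\times m,\{3,6\},1)$-GDF with $h\in\{13,19\}$; for large $m$ this is produced recursively by the same Case~2 argument with the roles of the coordinates swapped.

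\emph{Case 3: $n,m\equiv 13,19\pmod{24}$.} Lemma~\ref{origin}(5) gives $(n,3,1)$-GDPs with leave $[13]$ for $n\geq91$ or $[19]$ for $n\geq133$, and similarly for $m$. Construction~\ref{nm-nxm,3,1} then gives an $(n\times m,3,1)$-GDP with leave $[h_1]\times[h_2]$, where $h_1,h_2\in\{13,19\}$. Construction~\ref{budong} reduces the problem to three small ingredients: $(13\times13)$-, $(13\times19)$- and $(19\times19,\{3,6\},1)$-GDFs, which I expect to find by computer search and place in an appendix. The remaining small values $n,m\in\{13,19,37,43,61,67,85,\dots\}$ below the thresholds of Lemma~\ref{origin}(5) would be handled by GDPs with leaves $[13]^r$ or $[19]^r$ from the appendix, in the same way the paper treats $\{25,31\}$ and $\{33,39,57\}$ in Section~4. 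Any still-missing small pairs would be found directly by computer.

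The main obstacle is Case~3. One difficulty is the existence of the three small seed GDFs, which is not guaranteed a priori and must be found by search. The other is covering every sporadic small $n$ or $m$ in $\{13,19\}\pmod{24}$ below the bounds $91$ and $133$ with a suitable GDP having a scaled leave $[13]^r$ or $[19]^r$.
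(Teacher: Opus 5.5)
Your Case~1 is the paper's argument, and your generic Case~2 step is correct and differs from the paper's route. The $3$-MGDD of type $3^6$ (Lemma~\ref{MGDD}(1)), Construction~\ref{MGDD-SPMGDD} and the extra block $S\times\{0\}$ give a $\{3,6\}$-SPGDD of type $m^6$. So an $(n,\{3,6\},1)$-GDF together with an $(m,3,1)$-GDF yields an $(n\times m,\{3,6\},1)$-GDF, the $\{3,6\}$-analogue of Lemma~\ref{yiweierweiGDF}. This settles $n\equiv13,19\pmod{24}$, $n\ge43$, against every $m\equiv1,7\pmod{24}$ with no computer-found ingredient. The paper instead reduces to the leave $[7]^r\times[13]^{r'}$ and fills it with a $(7\times13,\{3,6\},1)$-GDF.

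The gap is at $n\in\{13,19,37\}$, $m\ge7$, which is exactly where your product lemma has no input. The trivial $(n,3,1)$-GDP with leave $[n]$ produces the leave $[n]\times[m]$, and filling that is the statement itself. Swapping the coordinates does not help. Every version of the product argument needs a one-dimensional GDF of length $n$: an $(n,\{3,6\},1)$-GDF as the starting family or to fill $[1]\times[n]$, or an $(n,3,1)$-GDF to build a $\{3,6\}$-SPGDD of type $n^6$. Neither exists, by Lemmas~\ref{n,3,1}(1) and~\ref{3.6}. Nor can $37$ be shrunk to $13$ or $19$, since no $(37,3,1)$-GDP with leave $[13]^r$ or $[19]^r$ exists. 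Removing $[13]^3$ leaves the positive differences $\{1,2,4,5,7,8,10,11,13,14,16,17\}$, which cannot be partitioned into triples $\{x,y,x+y\}$. The leaves $[13]$, $[13]^2$, $[19]$, $[19]^2$ fail by size or parity. In fact $13$, $19$ and $37$ admit no nontrivial $(\cdot,3,1)$-GDP with leave of the form $[h]^r$ at all.

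The missing idea is to shrink the \emph{other} coordinate and supply three seeds directly. Take an $(m,3,1)$-GDP with leave $[7]$ (Lemma~\ref{origin}(1), $m\ge49$) or $[7]^3$ ($m\in\{25,31\}$, Appendix~\ref{app-D}), or use $m=7$ itself. Combine it with a PDM$(3,n)$ via Construction~\ref{N-NxM-GDP} to get the leave $[7]^r\times[n]$. Then fill it with $(7\times13)$-, $(7\times19)$- and $(7\times37,\{3,6\},1)$-GDFs; no recursion produces these, and the paper finds them by computer (Appendix~\ref{app-C}).

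The same indecomposability affects your Case~3. There $19$ and $37$ must be kept whole via PDM$(3,19)$ and PDM$(3,37)$. So besides your $(13\times13)$, $(13\times19)$ and $(19\times19)$ seeds you also need $(13\times37)$, $(19\times37)$ and $(37\times37)$ seeds, and your ``remaining pairs by computer'' clause has to supply them explicitly.

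Finally, for $m=1$, Lemma~\ref{3.6} gives nothing when $n\in\{13,19,37\}$. The pairs $\{1,13\},\{1,19\},\{1,37\}$ are genuine exceptions (Theorem~\ref{result36}), so the lemma must be read with them excluded; the paper's proof also tacitly skips the coordinate $1$.
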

{\Proof} $(1)$   $n,m\equiv 1,7 \pmod{24}$.

There exists an $(n\times m, \{3,6\}, 1)$-GDF, since  an $(n\times m,3,1)$-GDF exists by Lemma \ref{14}.\vspace{0.1cm}

$(2)$   $n\equiv 1,7 \pmod{24}$, $m\equiv 13,19 \pmod{24}$.

For $n\equiv1,7\pmod{24}$, when $n\geq 49$, there exists an $(n,3,1)$-GDP with leave $[7]$ by Lemma \ref{origin} (1); when  $n\in \{25,31\}$, there exists    an $(n,3,1)$-GDP with leave $[7]^{3}$ by Appendix D. There exists a  $(7,3,1)$-GDP with leave $[7]$.

When $m\equiv13,19\pmod{24}$ and $m\geq91$, there exists an $(m,3,1)$-GDP with leave $[13]$ by Lemma \ref{origin} (5). When  $m\in \{43,61,67,85\}$, there exists an $(m,3,1)$-GDP with leave $[13]^{3}$ by Appendix D, and there exists a  $(13,3,1)$-GDP with leave $[13]$. Applying Construction \ref{nm-nxm,3,1}, we can obtain an $(n\times m, 3, 1)$-GDP with leave $[7]\times [13]$. Further, there exists a $(7\times 13, \{3,6\}, 1)$-GDF by Appendix C. So there exists an $(n\times m, \{3,6\}, 1)$-GDF by  Construction \ref{budong}.

When $m\in\{19,37\}$, there exists a PDM$(3,m)$  by Lemma \ref{345PDM} (1), i.e.,  a  $3$-SPGDD of type $m^{3}$, and there exists a $(7\times m, \{3,6\}, 1)$-GDF by Appendix C.  Applying Construction \ref{N-NxM-GDP}, we  obtain an $(n\times m, 3, 1)$-GDP with leave $[7]\times [m]$. So   there exists an  $(n\times m, \{3,6\}, 1)$-GDF   by Construction  \ref{budong}.\vspace{0.1cm}

$(3)$   $n,m\equiv 13,19 \pmod{24}$.

When $n,m\equiv 13,19 \pmod{24}$ and $n,m\geq 91$, there exists an $(n,3,1)$-GDP with leave $[13]$ by Lemma \ref{origin} (5). There is a $(13\times 13, \{3,6\}, 1)$-GDF by Appendix C. So  there exists an  $(n\times m, \{3,6\}, 1)$-GDF by  Constructions \ref{nm-nxm,3,1} and \ref{budong}.

Consider the case of $n\equiv13,19\pmod{24}$ and $n\geq91$, $m\in\{13,19,37,43,61,67,85\}$. There exists an $(n,3,1)$-GDP with leave $[13]$ by Lemma \ref{origin} (5). When $m\in \{43,61,67,85\}$, there exists an $(m,3,1)$-GDP with leave $[13]^{3}$ by Appendix D, and  there exists a  $(13,3,1)$-GDP with leave $[13]$. Further, a $(13\times 13, \{3,6\}, 1)$-GDF exists by Appendix C. Then there exists an  $(n\times m, \{3,6\}, 1)$-GDF  by  Constructions \ref{nm-nxm,3,1} and \ref{budong}.
When $m\in\{19,37\}$, there exists   a $3$-SPGDD of type $m^{3}$ and a $(13\times m, \{3,6\}, 1)$-GDF by Appendix C. Then there exists an  $(n\times m, \{3,6\}, 1)$-GDF   by  Constructions  \ref{N-NxM-GDP} and \ref{budong}.

Consider the case of $n,m\in\{13,19,37,43,61,67,85\}$. If $n,m\in \{43,61,67,85\}$, there exists an $(n,3,1)$-GDP with leave $[13]^{3}$ by Appendix D and a  $(13,3,1)$-GDP with leave $[13]$. Further, a $(13\times 13, \{3,6\}, 1)$-GDF exists by Appendix C.  Then there exists an  $(n\times m, \{3,6\}, 1)$-GDF   by  Constructions \ref{nm-nxm,3,1} and \ref{budong}.
If $n\in\{19,37\}$, $m\in\{13,43,61,67,85\}$, there exists   a $3$-SPGDD of type $n^{3}$ and  a $(13\times n, \{3,6\}, 1)$-GDF   by Appendix C.  Then there exists an  $(n\times m, \{3,6\}, 1)$-GDF   by  Constructions  \ref{N-NxM-GDP} and \ref{budong}.
If $n,m\in\{19,37\}$,  there exists a $(19\times 19, \{3,6\}, 1)$-GDF, a $(19\times 37, \{3,6\}, 1)$-GDF and a $(37\times 37, \{3,6\}, 1)$-GDF by Appendix C.\qed

\begin{lemma}\label{555}
There is no $(5\times 5,\{3,6\},1)$-GDF.
\end{lemma}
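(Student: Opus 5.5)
The plan is to reduce the statement to the pure triple case via the counting equation, and then invoke Lemma \ref{14}. Suppose $\mathcal{B}$ is a $(5\times 5,\{3,6\},1)$-GDF, with $x$ base blocks of size $3$ and $y$ base blocks of size $6$.

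First, count the differences. A $3$-subset contributes $6$ differences and a $6$-subset contributes $30$. Every nonzero element of $[5]\times[5]$ must be covered exactly once, so Equation (\ref{erwei36}) with $nm=25$ gives
$$6x+30y=24.$$
Since $30y\le 24$ forces $y=0$, we get $x=4$. Hence $\mathcal{B}$ contains no block of size $6$ at all.

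Second, $\mathcal{B}$ therefore consists of four base blocks of size $3$, i.e.\ it is a $(5\times 5,3,1)$-GDF. By Lemma \ref{14}, such a GDF exists only if $n,m\equiv 1,7,17,23\pmod{24}$. Here $n=m=5$, and $5\not\equiv 1,7,17,23\pmod{24}$. This is a contradiction, so no $(5\times 5,\{3,6\},1)$-GDF exists.

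There is no real obstacle: the whole argument rests on the divisibility observation that forces $y=0$, after which Lemma \ref{14} (Wang et al.) finishes it. If one preferred not to rely on that lemma, the fallback would be an exhaustive check, up to symmetry, that the $24$ nonzero points of $[5]\times[5]$ cannot be partitioned into the difference lists of four triples. A parity argument like the one in Lemma \ref{non} might also do it, but citing Lemma \ref{14} is the intended route.
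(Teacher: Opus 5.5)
Your proposal is correct and follows the paper's proof: Equation (\ref{erwei36}) with $nm=25$ forces $y=0$, so any such family would be a $(5\times 5,3,1)$-GDF, and Lemma \ref{14} rules that out because $5\not\equiv 1,7,17,23\pmod{24}$. You spell out the arithmetic ($6x+30y=24$ gives $x=4$, $y=0$) a bit more explicitly than the paper does.
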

{\Proof} If a $(5\times 5,\{3,6\},1)$-GDF exists, then it contains only the base blocks of size 3 and no base blocks of size 6 from Equation (\ref{erwei36}). By Lemma \ref{14}, there is no $(5\times 5,\{3,6\},1)$-GDF.
\qed

\begin{lemma}\label{zuihou36}
When $n,m \equiv 5,11,17,23\pmod{24}$, there exists an $(n\times m, \{3,6\}, 1)$-GDF.
\end{lemma}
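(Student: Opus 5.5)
The plan is to mirror the proof of the preceding lemma for $n,m\equiv 1,7,13,19\pmod{24}$: first dispose of the subcase already covered by Lemma \ref{14}, and then reduce the remaining subcases to a finite list of small $(h_1\times h_2,\{3,6\},1)$-GDFs via Constructions \ref{nm-nxm,3,1}, \ref{N-NxM-GDP} and \ref{budong}. Throughout, the pair $\{n,m\}=\{5,5\}$ must be set aside, since by Lemma \ref{555} it is a genuine exception.

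\textbf{Case $n,m\equiv 17,23\pmod{24}$.} Here $nm\equiv 1\pmod 6$ and both factors lie in the admissible classes of Lemma \ref{14}. That lemma therefore gives an $(n\times m,3,1)$-GDF, which is in particular an $(n\times m,\{3,6\},1)$-GDF.

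\textbf{Case $n\equiv 5,11\pmod{24}$ and $m\equiv 17,23\pmod{24}$.} For the $n$-coordinate I take an $(n,3,1)$-GDP with leave $[5]$ for $n\ge 35$ from Lemma \ref{origin} (3). For the finitely many small values $n\in\{5,11,29\}$ I use either the trivial leave $[n]$ or a leave $[5]^{r}$ found by computer and listed in Appendix D. For the $m$-coordinate I take leave $[17]$ for $m\ge 119$ from Lemma \ref{origin} (6), and for the small $m$ I use leaves $[17]^r$ or $[23]^r$ from Appendix D, exactly as in the proof of Lemma \ref{zuihou35}. Construction \ref{nm-nxm,3,1} then yields an $(n\times m,3,1)$-GDP with leave $[5]^{r_1}\times[17]^{r_2}$ (or with $[23]$ in place of $[17]$). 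Construction \ref{budong} fills this leave, provided that $(5\times17,\{3,6\},1)$- and $(5\times 23,\{3,6\},1)$-GDFs exist; these would be obtained by computer search and placed in Appendix C. The remaining small pairs, for instance $11\times 17$ or $29\times 23$, are handled either in the same way or by Construction \ref{N-NxM-GDP} with a PDM$(3,m)$ from Lemma \ref{345PDM} (1), which leaves $[5]\times[m]$ and reduces to a $(5\times m,\{3,6\},1)$-GDF from Appendix C.

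\textbf{Case $n,m\equiv 5,11\pmod{24}$.} Now the natural leave $[5]\times[5]$ is useless, because no $(5\times5,\{3,6\},1)$-GDF exists. So I arrange that one coordinate carries leave $[5]$ and the other carries leave $[11]$. For $m\ge 77$ I use an $(m,3,1)$-GDP with leave $[11]$ from Lemma \ref{origin} (3), and for $m\in\{29,35,53,59\}$ I use leaves $[11]^r$ from Appendix D. This gives an $(n\times m,3,1)$-GDP with leave $[5]^{r_1}\times[11]^{r_2}$, and Construction \ref{budong} then requires only a $(5\times 11,\{3,6\},1)$-GDF. The pairs in which both coordinates are small, such as $5\times m$ for $m\in\{11,29\}$, $11\times11$ and $11\times 29$, are taken directly from Appendix C. Alternatively, one coordinate can be reduced to $[5]$ with Construction \ref{N-NxM-GDP} and a PDM$(3,m)$, leaving a $5\times m$ base case.

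The main obstacle is not the recursion but the base cases. We need the $(5\times 11)$, $(5\times 17)$, $(5\times 23)$ and a few other small $\{3,6\}$-GDFs, plus GDPs with leaves $[5]^r$, $[11]^r$, $[17]^r$, $[23]^r$ for each small length not covered by Lemma \ref{origin}. A further difficulty is bookkeeping: we must check that every pair with $\{n,m\}\ne\{5,5\}$ is routed to some base GDF that actually exists. I would first enumerate all small exceptional lengths, fix one leave for each, and then verify the resulting finite list of base designs by computer.
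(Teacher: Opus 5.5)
Your framework matches the paper's. You use Lemma \ref{14} for $n,m\equiv 17,23\pmod{24}$, then Constructions \ref{nm-nxm,3,1}, \ref{N-NxM-GDP} and \ref{budong} to push everything onto a few small $\{3,6\}$-GDFs, and you rightly exclude $\{5,5\}$. Using $[5]\times[11]$ instead of the paper's $[11]\times[11]$ for large $n,m\equiv 5,11\pmod{24}$ is a harmless variant. But in this lemma the routing \emph{is} the proof, and some of your routes require $(v,3,1)$-GDPs that cannot exist. Two facts settle this. Each base block contributes positive differences $\{a,b,a+b\}$, a triple containing $0$ or $2$ odd numbers. The leave must also lie inside $[v]$.

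With these, there is no $(29,3,1)$- or $(35,3,1)$-GDP with leave $[11]^r$. Only $r\le 2$, resp.\ $r\le 3$, keeps the leave inside $[v]$. For $r=1$ the summands $a,b$ are confined to $[6,8]$, resp.\ $[6,11]$, too few for the required $3$, resp.\ $4$, triples. For $r=2$ an odd number of odd differences remains. The case $v=35$, $r=3$ fails by a short exhaustive check. The paper instead gives $29$ and $35$ the leave $[5]^3$. Then the pairs with both coordinates in $\{5,29,35\}$ cannot be split as ``$[5]$ on one side, $[11]$ on the other''. They go through a PDM$(3,m)$, with leave $[5]^3\times[m]$, and the base $(5\times 29)$- and $(5\times 35)$-GDFs.

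The more serious gap is in the mixed case $n\equiv 5,11$, $m\equiv 17,23\pmod{24}$. Taking the small $m$ ``exactly as in Lemma \ref{zuihou35}'' misses $m=41,47$. There they were handled by $\{3,5\}$-GDPs with leave $[9]^3$, which are useless here because they contain blocks of size $5$. Nor does any $(m,3,1)$-GDP with leave $[17]^r$ or $[23]^r$ exist for $m\in\{41,47\}$:
\begin{itemize}
\item Larger $r$ push the leave outside $[m]$.
\item For $r=1$ there are too few admissible summands, and none at all for $[23]$, since $a+b\ge 24$.
\item For $[17]^2$ the remaining differences contain $10$, resp.\ $12$, odd numbers, while $4$, resp.\ $5$, triples hold at most $8$, resp.\ $10$.
\item For $m=47$ with $[23]^2$ only odd differences remain.
\end{itemize}
Your fallback, a PDM$(3,m)$ giving leave $[5]\times[m]$, would then need $(5\times 41)$- and $(5\times 47)$-GDFs. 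For $n=11$ it would need something like an $(11\times 41)$-GDF. Neither you nor the paper has any of these. The missing ingredient is the paper's $(41,3,1)$- and $(47,3,1)$-GDPs with leave $[11]^2$. Paired with the leave $[5]$ or $[5]^3$ on the other coordinate, they reduce to the $(5\times 11)$-GDF. For $n=11$, with trivial leave $[11]$, they reduce to the $(11\times 11)$-GDF, so no new base designs are required.
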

{\Proof} $(1)$   $n,m\equiv 17,23 \pmod{24}$.

There exists an $(n\times m, \{3,6\}, 1)$-GDF,  since   an $(n\times m,3,1)$-GDF exists  by Lemma \ref{14}.

$(2)$   $n,m \equiv 5,11\pmod{24}$.

When $n,m\equiv5,11\pmod{24}$ and $n,m\geq77$, there exists an $(n,3,1)$-GDP with leave $[11]$ and an $(m,3,1)$-GDP with leave $[11]$ by Lemma \ref{origin} (3). There exists a  $(11\times 11, \{3,6\}, 1)$-GDF by Appendix C.  So  there exists an  $(n\times m, \{3,6\}, 1)$-GDF   by  Constructions \ref{nm-nxm,3,1} and \ref{budong}.

Consider the case of $n\equiv5,11\pmod{24}$ and $n\geq77$, $m\in\{5,11,29,35,53,59\}$. When $m\in\{29,35\}$, there exists an $(m,3,1)$-GDP with leave $[5]^{3}$ by Appendix D and  a  $(5,3,1)$-GDP with leave $[5]$. When $m\in\{11,53,59\}$, there exists an $(m,3,1)$-GDP with leave $[11]^{3}$ by Appendix D and  a $(11,3,1)$-GDP with leave $[11]$. Further, a $(11\times 5, \{3,6\}, 1)$-GDF and a $(11\times 11, \{3,6\}, 1)$-GDF exist by Appendix C. So  there exists an  $(n\times m, \{3,6\}, 1)$-GDF   by  Constructions \ref{nm-nxm,3,1} and \ref{budong}.

Consider the case of $n,m\in\{5,11,29,35,53,59\}$. There is no $(5\times5,\{3,6\},1)$-GDF by Lemma \ref{555}. If $n,m\in\{29,35\}$, there exists  an $(n,3,1)$-GDP with leave $[5]^{3}$  by Appendix D and a PDM$(3,m)$  by Lemma \ref{345PDM} (1), i.e., a $3$-SPGDD of type $m^{3}$. Then  there exists an  $(n\times m, \{3,6\}, 1)$-GDP  with leave $[5]^3\times [m]$ by  Construction \ref{N-NxM-GDP}.
If $n\in\{29,35\}$, $m\in\{11,53,59\}$, there exists an $(n,3,1)$-GDP with leave $[5]^{3}$, an $(m,3,1)$-GDP with leave $[11]^{3}$ by Appendix D and there exists a $(11,3,1)$-GDP with leave $[11]$. Then there exists an  $(n\times m, \{3,6\}, 1)$-GDP  with leave $[5]^3\times [11]^r$ by  Construction \ref{nm-nxm,3,1}, where $r=1$ or 3. If $n,m\in\{11,53,59\}$, then there exists an  $(n\times m, \{3,6\}, 1)$-GDP  with leave $[11]^r\times [11]^r$ by  Construction \ref{nm-nxm,3,1}, where $r=1$ or 3.  Further,
there exists a $(5\times 11, \{3,6\}, 1)$-GDF, a $(5\times 29, \{3,6\}, 1)$-GDF, a $(5\times 35, \{3,6\}, 1)$-GDF and a $(11\times 11, \{3,6\}, 1)$-GDF by Appendix C. So   there exists an  $(n\times m, \{3,6\}, 1)$-GDF   by  Construction  \ref{budong}.
\vspace{0.1cm}

$(3)$  $n\equiv 5,11 \pmod{24}$, $m\equiv 17,23 \pmod{24}$.

When $n\equiv5,11\pmod{24}$ and $n\geq35$, there exists an $(n,3,1)$-GDP with leave $[5]$ by Lemma \ref{origin} (3). When $m\equiv17,23\pmod{24}$ and $m\geq119$, there exists an $(m,3,1)$-GDP with leave $[17]$ by Lemma \ref{origin} (6). Applying Construction \ref{nm-nxm,3,1}, we get an $(n\times m, 3, 1)$-GDP with leave $[5]\times [17]$. From Appendix D, there exists an $(m,3,1)$-GDP with leave $[11]^{2}$ for $m\in \{41,47\}$, a  $(65,3,1)$-GDP with leave $[17]^{2}$,  an $(m,3,1)$-GDP with leave $[17]^{3}$ for $m\in \{71,113\}$, an $(m,3,1)$-GDP with leave $[23]^{3}$ for $m \in \{89,95\}$. And  there exists a  $(17,3,1)$-GDP with leave $[17]$,  a  $(23,3,1)$-GDP with leave $[23]$.
 Further, there exists a $(5\times 11, \{3,6\}, 1)$-GDF, a $(5\times 17, \{3,6\}, 1)$-GDF and  a $(5\times 23, \{3,6\}, 1)$-GDF
 by Appendix C.   So   there exists an  $(n\times m, \{3,6\}, 1)$-GDF   by  Constructions  \ref{nm-nxm,3,1} and  \ref{budong}.

Consider the case of $n\in\{5,11,29\}$, $m\equiv 17,23 \pmod{24}$ and $m\geq119$. There exists an $(m,3,1)$-GDP with leave $[17]$ by Lemma \ref{origin} (6). There exists a  $(5,3,1)$-GDP with leave $[5]$ and a $(29,3,1)$-GDP with leave $[5]^{3}$ by Appendix D.  Applying Construction \ref{nm-nxm,3,1}, we get an $(n\times m, 3, 1)$-GDP with leave $[5]^r\times [17]$ for $n\in\{5,29\}$, where $r=1$ or $3$. When $n=11$, there exists a PDM(3,11), i.e., a $3$-SPGDD of type $11^{3}$ by Lemma \ref{345PDM} (1). Applying Construction \ref{N-NxM-GDP}, there exists a $(11\times m, 3, 1)$-GDP with leave $[11]\times [17]$. Further, there exists a  $(17,3,1)$-GDP with leave $[5]^{2}$ by Appendix D and a $(5\times 17, \{3,6\}, 1)$-GDF by Appendix C.   So   there exists an  $(n\times m, \{3,6\}, 1)$-GDF   by  Constructions  \ref{nm-nxm,3,1} and  \ref{budong}.

Consider the case of $n\in\{5,11,29\}$, $m\in\{17,23,41,47,65,71,89,95,113\}$. There exists a  $(5,3,1)$-GDP with leave $[5]$, a $(11,3,1)$-GDP with leave $[11]$, a $(29,3,1)$-GDP with leave $[5]^{3}$, a $(23,3,1)$-GDP with leave $[5]^{2}$ and a $(17,3,1)$-GDP with leave $[5]^{2}$ by Appendix D. There exists a  $3$-SPGDD of type $11^{3}$ by Lemma \ref{345PDM} (1).
 Regarding the value of $m$, it is the same as the first case of this category. Further, there exists a   $(5\times 11, \{3,6\}, 1)$-GDF, a   $(5\times 17, \{3,6\}, 1)$-GDF, a   $(5\times 23, \{3,6\}, 1)$-GDF, a   $(11\times 11, \{3,6\}, 1)$-GDF,  a   $(11\times 17, \{3,6\}, 1)$-GDF by Appendix C.
 Applying Constructions \ref{nm-nxm,3,1}-\ref{N-NxM-GDP}, there exists an  $(n\times m, \{3,6\}, 1)$-GDF.
\qed

\noindent {\bf \emph{Proof of Theorem} \ref{result36}}   Combining the results of Lemmas \ref{3.6}-\ref{zuihou36},  the theorem is proved.\qed

\section{$(n\times m, \{3,4,5\}, 1)$-GDFs}
In this section, we discuss the existence of the remaining parameters for $(n\times m, \{3,4,5\}, 1)$-GDFs in Lemma \ref{result345}.
\begin{lemma}\label{non345}
There is no $(5\times 13, \{3,4,5\},1)$-GDF.
\end{lemma}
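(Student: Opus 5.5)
The plan is to combine a counting argument with a parity argument in the spirit of Lemma~\ref{non}. This cuts the possible block compositions down to a single case, which I would then rule out by a structured exhaustive (computer) search.

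\textbf{Step 1 (counting).} Suppose $\mathcal{B}$ is a $(5\times 13,\{3,4,5\},1)$-GDF with $x,y,z$ base blocks of sizes $3,4,5$. Blocks of these sizes contribute $6,12,20$ differences, respectively. Hence $6x+12y+20z=64$, i.e.\ $3x+6y+10z=32$. Since $32-10z$ must be a nonnegative multiple of $3$, we need $z=2$ and $3x+6y=12$. This leaves $(x,y,z)\in\{(4,0,2),(2,1,2),(0,2,2)\}$.

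\textbf{Step 2 (parity on the first coordinate).} As in the proof of Lemma~\ref{non}, order each block lexicographically and sum the first coordinates of all differences lying in the two right quadrants; differences with first coordinate $0$ contribute nothing.
\begin{itemize}
\item A $3$-block contributes $2(a_3-a_1)$, which is even.
\item A $5$-block contributes $2(2a_5+a_4-a_2-2a_1)$, which is even.
\item A $4$-block contributes $3a_4+a_3-a_2-3a_1\equiv a_1+a_2+a_3+a_4 \pmod 2$.
\end{itemize}
On the GDF side, this sum equals $(1+2)\cdot 13=39$, which is odd. So the number of $4$-blocks whose first-coordinate sum is odd must itself be odd. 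This kills $(4,0,2)$ outright. In the case $(0,2,2)$, exactly one of the two $4$-blocks has odd first-coordinate sum.

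Running the same argument on the second coordinate gives $(1+\cdots+6)\cdot 5=105$, also odd. Therefore some $4$-block has odd second-coordinate sum as well. I would record these constraints for Step 3 (and check whether the $(0,2,2)$ case can be excluded by a refinement, e.g.\ working modulo $4$).

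\textbf{Step 3 (remaining cases, main obstacle).} The surviving configurations are:
\begin{itemize}
\item $(2,1,2)$: two triples, one quadruple with odd coordinate sums in both coordinates, and two $5$-blocks;
\item possibly $(0,2,2)$.
\end{itemize}
I do not see a clean hand argument here, so I would do an exhaustive backtracking search.

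The search space can be reduced in several ways. Translating a base block does not change its differences, so each block may be normalized, e.g.\ with its lexicographically least point at a fixed position. The symmetries $(a,b)\mapsto(\pm a,\pm b)$ further reduce the cases. The two $5$-blocks are the most constrained, since they produce $40$ of the $64$ differences, so I would place them first.

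To prune, I would order the search by the hardest differences to cover: those with first coordinate $\pm4$ (the elements $(4,b)$, $b\in[13]$, and their negatives) and those with second coordinate $\pm12$. These can only arise from blocks spanning the full width of the grid, which forces many blocks to have extreme points. Completing this search and finding no solution proves the lemma. This search step is the genuine obstacle; the paper presumably also relies on a computer search here.
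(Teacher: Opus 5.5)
Your Steps 1 and 2 are correct and agree with the paper. The count gives the same three cases. Your first-coordinate parity is exactly the argument of Lemma~\ref{non}, which the paper uses (through Theorem~\ref{result35}, since $5\equiv 5\pmod 8$) to discard $(4,0,2)$. The second-coordinate parity ($5\cdot 21=105$ odd) is a valid extra constraint.

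Step 3, however, contains a real error. In a $(5\times 13,\{3,4,5\},1)$-GDF every difference lies in $[5]\times[13]=[-2,2]\times[-6,6]$; your own totals $64$, $39$ and $105$ use exactly this. So there are no elements $(\pm 4,b)$ or $(a,\pm 12)$ to cover. Two points of a block at column distance $4$, or at row distance $12$, are simply forbidden, and every base block lies in three consecutive columns and seven consecutive rows. You seem to be taking differences of arbitrary points of $[5]\times[13]$, which range over $[9]\times[25]$; that is the perfect-GOC picture (a perfect $5\times 13$ GOC is a $9\times 25$-GDF). A search built around covering $(\pm 4,b)$ looks for the wrong object. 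The extreme elements are the $26$ differences $(\pm 2,b)$, and these arise only from pairs at column distance exactly $2$.

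Using this constraint properly gives the reduction you are missing, which is how the paper proceeds. Record each block by its column occupancy $(c_0,c_1,c_2)$ on three consecutive columns. Of its $\binom{k}{2}$ differences in the positive half:
\begin{itemize}
\item $\sum_i\binom{c_i}{2}$ have first coordinate $0$;
\item $c_0c_1+c_1c_2$ have first coordinate $1$;
\item $c_0c_2$ have first coordinate $2$.
\end{itemize}
Over the whole family these must total $6$, $13$ and $13$. A $4$-block has at least one same-column pair and a $5$-block at least two.

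In case $(0,2,2)$ this forces equality in every block. So the $4$-blocks are $(2,1,1)$ (or its mirror image, allowed since $\Delta(-B)=\Delta B$) or $(1,2,1)$, and the $5$-blocks are $(2,2,1)$ or $(2,1,2)$. That gives at most $2+2+4+4=12<13$ differences with first coordinate $2$, so this case dies by hand with no search.

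In case $(2,1,2)$ the same bookkeeping leaves a single pattern. Your parity fact shortens this: an odd column sum forces the $4$-block to be $(3,1)$ or $(2,1,1)$, and $(3,1)$ has too many same-column pairs. The pattern is $3$-blocks $(2,0,1)$ and $(1,1,1)$, the $4$-block $(2,1,1)$, and two $5$-blocks $(2,1,2)$. Only the second coordinates in $[0,6]$ then need to be searched, which is what the program in Appendix~\ref{app-F} does.

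A general backtracking search along your lines would also settle the lemma once the difference range is corrected. As written, though, Step 3 is an unexecuted and partly mis-specified computation rather than a proof. The paper's column-type analysis is what makes the computational part small and explicitly checkable.
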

{\Proof} Suppose that there exists a  $(5\times 13, \{3,4,5\},1)$-GDF with base block set $\mathcal{B}$. Denote the set of base blocks of size $i$ in $\mathcal{B}$ by $\mathcal{B}_{i}$ and $|\mathcal{B}_{i}|= x_{i}$ for $i\in\{3,4,5\}$, respectively. For $ B\in \mathcal{B}_{i},$ $i\in\{3,4,5\}$, define
\begin{center}
  $\Delta B_{i,0} = \{(0,y): (0,y)\in \Delta B,  y>0\}$,
  $\Delta B_{i,j} = \{(j,y): (j,y)\in \Delta B\},  j=1,2$,
\end{center}
and
\begin{center}
    $\Delta \mathcal{F}_{j} = \bigcup\limits_{i\in\{3,4,5\}}\bigcup\limits_{B\in \mathcal{B}_i}(\Delta B_{i,j}), ~j=0,1,2$.
\end{center}
From the property of perfect difference families, we have $|\Delta \mathcal{F}_{0}|=6$, $|\Delta \mathcal{F}_{1}|=|\Delta \mathcal{F}_{2}|=13$. By listing all of the possible forms of the base blocks, we have
\begin{flushleft}
  $(|\Delta B_{3,0}|,|\Delta B_{3,1}|,|\Delta B_{3,2}|)$ = $(3,0,0)$ or $(1,2,0)$ or $(1,0,2)$ or $(0,2,1)$;\\

  $(|\Delta B_{4,0}|,|\Delta B_{4,1}|,|\Delta B_{4,2}|)$ = $(6,0,0)$ or $(3,3,0)$ or $(3,0,3)$ or $(2,4,0)$  or $(2,0,4)$ or $(1,3,2)$,\\
  ~~~~~~~~~~~~~~~~~~~~~~~~~~~~~~~~~~~~~~$(1,4,1)$;\\
  $(|\Delta B_{5,0}|,|\Delta B_{5,1}|,|\Delta B_{5,2}|)$ = $(10,0,0)$ or $(6,4,0)$ or $(6,0,4)$ or $(4,6,0)$  or $(4,0,6)$ or $(3,4,3)$,\\
  ~~~~~~~~~~~~~~~~~~~~~~~~~~~~~~~~~~~~~~$(3,6,1)$ or $(2,6,2)$ or $(2,4,4)$;\\
\end{flushleft}

 On the other hand, we have $|\Delta \mathcal{B}|= 6x_{3}+12x_{4}+20x_{5}=64$. It is straightforward to check that the equation has three solutions $(x_{3},x_{4},x_{5})=(4,0,2)$ or $(0,2,2)$ or $(2,1,2)$. $(x_{3},x_{4},x_{5})=(4,0,2)$  is impossible by Theorem \ref{result35}. When $(x_{3},x_{4},x_{5})=(0,2,2)$, since $|\Delta \mathcal{F}_{0}|=6$, then $|\Delta B_{4,0}|\leq 1$, $|\Delta B_{5,0}|\leq 2$. It contradicts   $|\Delta \mathcal{F}_{1}|=13$.
 When $(x_{3},x_{4},x_{5})=(2,1,2)$, since $|\Delta \mathcal{F}_{0}|=6$, $|\Delta \mathcal{F}_{1}|=|\Delta \mathcal{F}_{2}|=13$, after  detailed calculation,  there is only one possibility:  there are two base blocks of size 3 in the form of $(|\Delta B_{3,0}|,|\Delta B_{3,1}|,$ $|\Delta B_{3,2}|)=(1,0,2)$ and $(0,2,1)$, a base block of  size 4 in the form of $(|\Delta B_{4,0}|,|\Delta B_{4,1}|,|\Delta B_{4,2}|)=(1,3,2)$,  two base blocks of   size 5 in the form of $(|\Delta B_{5,0}|,|\Delta B_{5,1}|,$ $|\Delta B_{5,2}|)= (2,4,4)$. It can be determined that such a $(5\times 13, \{3,4,5\},1)$-GDF does not exist by exhaustive computer search, the program is listed in Appendix F.

Therefore,  there is no $(5\times 13, \{3,4,5\},1)$-GDF.
 \qed

\begin{lemma}\label{on345}
There is an $(n\times m,\{3,4,5\},1)$-GDF for any $\{n,m\}\in\{\{5,45\},\{7,23\},\{7,29\},$ $ \{7,35\}, \{9,35\},\{11,19\},\{11,27\}, \{13,17\},\{13,21\},\{13,23\},\{13,27\},\{13,29\},\{13,35\},~ \{15,$ \\ $ 17\}, \{15, 21\},\{15, 27\},\{17,21\},\{17, 27\},\{21, 21\},\{21, 27\},\{23, 45\},\{27,27\}, \{29,45\}, \{35,45\}\}$.
\end{lemma}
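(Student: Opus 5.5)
The plan is to dispose of the 24 pairs in two ways. Pairs for which an earlier theorem of this paper already yields a GDF are handled first. The remaining pairs are obtained from a few small direct constructions, found by computer and listed in an appendix, combined with the recursive Constructions \ref{nm-nxm,3,1}, \ref{budong} and \ref{N-NxM-GDP}.

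\textbf{Step 1 (pairs covered by earlier results).} Any $(n\times m,\{3,5\},1)$-GDF or $(n\times m,\{3,6\},1)$-GDF with no blocks of size 6 is also a $\{3,4,5\}$-GDF. Hence Theorem \ref{result35} settles every listed pair with $n,m\equiv1,7\pmod 8$ and neither entry equal to $1$. This covers $\{7,23\}$, $\{15,17\}$ and $\{23,...\}$-type pairs where applicable; for instance $\{7,23\}$ and $\{15,17\}$ qualify.

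Lemma \ref{14} (the $3$-GDF) covers pairs with $n,m\equiv1,7,17,23\pmod{24}$, which includes $\{7,23\}$. Lemma \ref{result34} covers every pair with $nm\equiv1\pmod 6$ via a $\{3,4\}$-GDF. This applies to $\{7,29\}$, $\{11,19\}$, $\{13,17\}$, $\{13,23\}$, $\{13,29\}$, $\{17,...\}$ and so on, i.e.\ all pairs in which neither entry is divisible by $3$ and the product is $\equiv1 \pmod 6$. The pairs I expect to remain are those with $3\mid nm$ or $5\mid nm$ in an unfavorable residue class. Examples are $\{5,45\}$, $\{7,35\}$, $\{9,35\}$, $\{11,27\}$, $\{13,21\}$, $\{13,27\}$, $\{13,35\}$, $\{15,21\}$, $\{15,27\}$, $\{17,21\}$, $\{17,27\}$, $\{21,21\}$, $\{21,27\}$, $\{23,45\}$, $\{27,27\}$, $\{29,45\}$ and $\{35,45\}$.

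\textbf{Step 2 (recursion for the larger remaining pairs).} For a pair $\{n,m\}$, I take an $(n,3,1)$-GDP with a small leave $[h_1]^{r_1}$ from Lemma \ref{origin} or from a direct GDP in an appendix. For example, $27$ and $21$ have leave $[3]$ or $[9]$, $35$ and $29$ have leave $[5]$, and $45$ has leave $[9]$ or $[5]^3$. I treat the other coordinate similarly, or else use a PDM$(3,m)$ via Construction \ref{N-NxM-GDP} to get leave $[h_1]^{r_1}\times[m]$. Construction \ref{budong} then reduces the problem to a small $(h_1\times h_2,\{3,4,5\},1)$-GDF.

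Since $3$ is excluded as a side, leaves $[3]$ are useless. I would therefore aim for leaves of size $5,7,9$, or a whole side $[m]$, and fill them with known small GDFs from Lemma \ref{result345}, e.g.\ $5\times 11$, $7\times 7$, $9\times 9$, $5\times 17$ or $9\times 11$. The target reductions are:
\begin{itemize}
\item $5\times45\to 5\times(\text{leave }[9]^{r})$ using a $(5\times 9)$-type fill. This is impossible since $\{5,9\}$ is excluded, so I would use leave $[5]^3$ or $[11]$ in $45$ instead and fill with $5\times 11$.
\item $29\times45$ and $35\times45\to$ leave $[5]\times[11]$ or $[5]\times[15]$.
\item $23\times45\to[7]\times[11]$ or $[11]\times[9]$.
\end{itemize}

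\textbf{Step 3 (direct constructions).} The genuinely small pairs such as $\{11,27\}$, $\{13,21\}$, $\{15,21\}$, $\{21,21\}$ and $\{27,27\}$ probably admit no useful leave, because their GDPs of small order do not exist. For these I would exhibit explicit base blocks found by computer search (hill climbing or randomized backtracking), listing them in an appendix and checking the difference condition by machine.

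I expect the main obstacle to be exactly this: choosing, for each residual pair, either a feasible recursive decomposition whose small ingredient avoids the forbidden sizes $3$, $\{5,7\}$, $\{5,9\}$, $\{5,13\}$, or otherwise completing a direct computer search on the moderately large boards like $27\times27$.
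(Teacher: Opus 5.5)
Your Step 1 has a genuine gap. You assert that Lemma \ref{result34} settles $\{7,29\}$, $\{11,19\}$, $\{13,17\}$, $\{13,23\}$ and $\{13,29\}$. But each of these has $nm\equiv5\pmod 6$ ($203,209,221,299,377$), so by that very lemma no $\{3,4\}$-GDF exists; indeed $6x_3+12x_4+20x_5=nm-1$ forces $x_5\equiv2\pmod 3$. In fact no pair in this lemma has $nm\equiv1\pmod 6$. That is to be expected: otherwise Lemma \ref{result34}, from the same paper of Su et al., would already have settled it.

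Because you then drop these five pairs from your list of remaining cases, they are never constructed anywhere in your proposal. Theorem \ref{result35} does not reach them, since none of the five has both entries $\equiv1,7\pmod 8$. The natural recursions either lack a usable leave or would need nonexistent fills such as $5\times7$ or $5\times13$. The paper obtains all five by direct construction (Appendix E). A smaller slip: Lemma \ref{14} does not give $\{7,23\}$, since $7\cdot23\equiv5\pmod 6$. This is harmless because Theorem \ref{result35} covers it. Your use of Theorem \ref{result35} for $\{15,17\}$ is correct, and simpler than the paper's recursive treatment of that pair.

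Step 2 also fails as written, because the leaves you propose cannot exist. An $(n,3,1)$-GDP with leave $L$ needs the number of nonzero elements of $[n]\setminus L$ to be divisible by $6$. Also, a triple $a<b<c$ contributes positive differences summing to $2(c-a)$, so the covered positive differences must have even sum. The proposed leaves fail these tests:
\begin{itemize}
\item for $n=45$, the leaves $[11]$ and $[5]^3$ leave $34$ and $40$ differences, while $[9]$ and $[15]$ give the odd sums $243$ and $225$;
\item for $n=23$, the leave $[7]$ leaves $16$ differences and $[11]$ gives the odd sum $51$;
\item for $n=21,27$, the leave $[9]$ gives the odd sums $45$ and $81$.
\end{itemize}
So none of your reductions for $\{5,45\},\{23,45\},\{29,45\},\{35,45\}$ is available, nor are those through a $[9]$-leave in $21$ or $27$.

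What works is to leave the $45$ side intact. Use a PDM$(3,45)$ in Construction \ref{N-NxM-GDP}, together with a $(23,3,1)$-GDP with leave $[5]^2$ and $(29,3,1)$-, $(35,3,1)$-GDPs with leave $[5]^3$. Construction \ref{budong} then reduces all three pairs to a single direct $(5\times45,\{3,4,5\},1)$-GDF, which is itself the $\{5,45\}$ case. Likewise, a $(17,3,1)$-GDP with leave $[5]^2$, a PDM$(3,k)$ and Su et al.'s $(5\times k,\{3,4,5\},1)$-GDFs for $k\in\{21,27\}$ give $\{17,21\}$ and $\{17,27\}$. The remaining pairs are direct constructions, as in your Step 3.
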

{\Proof} When $\{n,m\}= \{7,23\}$, there exists an $(n\times m,\{3,4,5\},1)$-GDF by Theorem \ref{result35}.

When $\{n,m\}\in \{\{15,17\},\{17,21\},\{17,27\}\}$,   a  $(17,3,1)$-GDP with leave $[5]^{2}$ exists by Appendix D;    a PDM$(3,k)$, i.e., a $3$-SPGDD of type $k^{3}$ exists by Lemma \ref{345PDM} (1) and   a $(5\times k,\{3,4,5\},1)$-GDF exists \cite{Su} for $k\in \{15,21,27\}$.
 Applying Constructions \ref{N-NxM-GDP}  and \ref{budong}, there exists an $(n\times m,\{3,4,5\},1)$-GDF.

   When $\{n,m\}\in \{\{23,45\},\{29,45\},\{35,45\}\}$, there is a $(23,3,1)$-GDP with leave $[5]^{2}$, a $(29,3,1)$-GDP with leave $[5]^{3}$ and a $(35,3,1)$-GDP with leave $[5]^{3}$ by Appendix D;    a PDM$(3,45),$ i.e., a $3$-SPGDD of type $45^{3}$ by Lemma \ref{345PDM} (1) and  a $(5\times 45,\{3,4,5\},1)$-GDF by Appendix E.  Applying Constructions \ref{N-NxM-GDP}  and \ref{budong}, there exists an $(n\times m,\{3,4,5\},1)$-GDF.

   When $\{n,m\}\in\{\{5,45\},\{7,29\},\{7,35\},\{9,35\},\{11,19\},\{11,27\},   \{13,17\},\{13,21\},\{13,$ $23\},\{13,27\},\{13,29\}, \{13,35\},\{15, 21\},\{15, 27\},\{21, 21\},\{21, 27\},\{27, 27\}\}$, there exists an $(n\times m,\{3,4,5\},1)$-GDF by Appendix E. \qed\vspace{0.2cm}

\noindent {\bf \emph{Proof of Theorem} \ref{xinresult345}}
Combining the results of Lemmas \ref{result345}, \ref{non345} and \ref{on345},  the theorem is proved.\qed

\section{Applications}
Geometric orthogonal codes were first introduced by Doty and Winslow \cite{Doty}, who  used geometric orthogonal codes to design sets of macrobonds in $3$D DNA origami so as to reduce undesirable bonding arising from misalignment and mismatches.
Rothemund \cite{Rothemund} indicated that when some external conditions,  such as temperature, salinity, density and concentration are the same, compared with a single bonds strength, different bonds strengths (codewords weights) improve the efficiency of constructing nanostructures. Therefore,  it is very important to study variable-weight GOCs.

Let $\mathbb{Z}$ be the set of integers and $n, m, \lambda$ be positive integers. An {\em  $(n\times m,K,\lambda)$-geometric orthogonal code}, briefly  $(n\times m,K,\lambda)$-GOC, is a collection $\cal{C}$ of subsets (called \emph{codewords} or \emph{macrobonds}) of $[0,n-1]\times[0,m-1]$ of sizes from a set of positive integers $K$ such that:
\begin{itemize}
\item[$(1)$] (the aperiodic auto-correlation):
$|B\cap(B+(s,t))|\leq\lambda$ for all $B\in\cal{C}$ and every $(s,t)\in \mathbb{Z}\times \mathbb{Z}\setminus\{(0,0)\}$;
\item[$(2)$] (the aperiodic cross-correlation):
$|A\cap(B+(s,t))|\leq\lambda$ for all $A,B\in\cal{C}$ with $A\neq B$ and every $(s,t)\in\mathbb{Z}\times \mathbb{Z}$,
\end{itemize}
where $B+(s,t)=\{(x+s,y+t):(x,y)\in B\}$. Such GOCs are called \emph{constant-weight} when $|K|=1$ or \emph{variable-weight} when $|K|>1$.

Let $\mathcal{C}$ be a collection of subsets of $[0,n-1]\times[0,m-1]$ of sizes from $K$. It is not convenient to check
the correctness of Conditions $(1)$ and $(2)$. But fortunately, the difference method is very efficient to describe an $(n\times m,K,\lambda)$-GOC for $\lambda=1$. Let $\Delta B$ be the list of differences from $B$ for any $B\in\mathcal{C}$ and $\Delta\mathcal{C}=\bigcup_{B\in\mathcal{C}}\Delta B$. It is readily checked that $\mathcal{C}$ constitutes an $(n\times m,K,1)$-GOC if
$\Delta\mathcal{C}$ covers every element of $[2n-1]\times[2m-1]$ at most once.
Furthermore, $\mathcal{C}$ is called a {\em perfect } $(n\times m,K,1)$-GOC if $\Delta\mathcal{C}=[2n-1]\times[2m-1]\backslash\{(0,0)\}$.

\begin{lemma}\label{GOC-GDF}{\rm (Su  et al. \cite{Su})}
A perfect $(n\times m,K,1)$-GOC is equivalent to an $((2n-1)\times(2m-1),K,1)$-GDF.
\end{lemma}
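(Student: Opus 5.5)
The plan is to give a direct translation argument in both directions. The key observation is that the list of differences $\Delta B$ is invariant under translation: for any $(s,t)\in\mathbb{Z}\times\mathbb{Z}$ we have $\Delta(B+(s,t))=\Delta B$ as multisets. Since $[2n-1]=[-(n-1),n-1]$ and $[2m-1]=[-(m-1),m-1]$, both notions can be described in the same ambient set $[2n-1]\times[2m-1]$ of possible differences. I will also use the fact, stated just before the lemma, that $\mathcal{C}$ is an $(n\times m,K,1)$-GOC exactly when $\Delta\mathcal{C}$ covers each element of $[2n-1]\times[2m-1]$ at most once.

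\emph{From a perfect GOC to a GDF.} Let $\mathcal{C}$ be a perfect $(n\times m,K,1)$-GOC. Any codeword $B\subseteq[0,n-1]\times[0,m-1]$ has all differences $(x_1-x_2,y_1-y_2)$ with $|x_1-x_2|\le n-1$ and $|y_1-y_2|\le m-1$, so $\Delta B\subseteq[2n-1]\times[2m-1]$. The blocks themselves need not lie in $[2n-1]\times[2m-1]$, so I translate each codeword by $\left(-\lfloor\frac{n-1}{2}\rfloor,-\lfloor\frac{m-1}{2}\rfloor\right)$. This moves $[0,n-1]\times[0,m-1]$ into $[2n-1]\times[2m-1]$ while leaving every difference list unchanged. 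Perfectness says $\Delta\mathcal{C}=[2n-1]\times[2m-1]\setminus\{(0,0)\}$, each element covered exactly once. Hence the translated family is a $((2n-1)\times(2m-1),K,1)$-GDP whose leave is exactly $\{(0,0)\}$, that is, a GDF.

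\emph{From a GDF to a perfect GOC.} Let $\mathcal{B}$ be a $((2n-1)\times(2m-1),K,1)$-GDF.
\begin{itemize}
\item Every nonzero element of $[2n-1]\times[2m-1]$ is covered exactly once. Counting gives $\sum_{B}|B|(|B|-1)=(2n-1)(2m-1)-1$, so every difference of every block is one of these elements; in particular $\Delta B\subseteq[2n-1]\times[2m-1]$. This is the packing convention implicit in the definition.
\item Consequently, within a single block $B$, any two first coordinates differ by at most $n-1$ and any two second coordinates differ by at most $m-1$.
\item Let $a=\min\{x:(x,y)\in B\}$ and $b=\min\{y:(x,y)\in B\}$. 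Then $B-(a,b)\subseteq[0,n-1]\times[0,m-1]$, and its difference list is still $\Delta B$.
\end{itemize}
The translated family $\mathcal{C}$ therefore has $\Delta\mathcal{C}=\Delta\mathcal{B}=[2n-1]\times[2m-1]\setminus\{(0,0)\}$, each element once. By the criterion above, $\mathcal{C}$ is an $(n\times m,K,1)$-GOC, and it is perfect.

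\emph{Main obstacle.} The only delicate point is the bound on the spread of coordinates inside a GDF block: one must be sure that no difference of a block falls outside $[2n-1]\times[2m-1]$. I would settle this with the counting identity in the first item of the second direction, which forces $\Delta\mathcal{B}$ to equal exactly the nonzero part of $[2n-1]\times[2m-1]$. The remaining checks are routine: block sizes are preserved under translation, and the aperiodic auto- and cross-correlation conditions are equivalent to the differences being distinct. Both are covered by the criterion recalled before the lemma.
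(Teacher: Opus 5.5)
Your argument is correct and is essentially the standard translation argument behind this equivalence. The paper only cites it from Su et al., and your proof rests, as that one presumably does, on the translation invariance of $\Delta B$ and the difference criterion for GOCs with $\lambda=1$. Two minor corrections. First, codewords in $[0,n-1]\times[0,m-1]$ already lie in $[2n-1]\times[2m-1]=[-(n-1),n-1]\times[-(m-1),m-1]$, so the translation in your first direction is unnecessary, though harmless. Second, in the other direction the identity $\sum_B|B|(|B|-1)=(2n-1)(2m-1)-1$ does not prove $\Delta B\subseteq[2n-1]\times[2m-1]$; it presupposes it. That containment is simply part of the GDF definition, since all differences must land in $N\times M$, as the paper's counts such as $6x+20y=n-1$ reflect, and it is exactly what you need.
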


By Theorems \ref{result35}-\ref{xinresult345} and Lemma \ref{GOC-GDF}, we get the results of perfect GOCs as follows.

\begin{theorem}\label{GOC35-result}
There exists a  perfect $(n\times m,\{3,5\},1)$-GOC if and only if $n, m\equiv0,1\pmod{4}$, $\{n,m\}\neq\{1,p\}$, where~$p\in \{5,8,9,12,21,24\}$.
\end{theorem}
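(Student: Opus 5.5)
The plan is to translate Theorem \ref{result35} through Lemma \ref{GOC-GDF}. By that lemma, a perfect $(n\times m,\{3,5\},1)$-GOC exists if and only if a $((2n-1)\times(2m-1),\{3,5\},1)$-GDF exists. So it suffices to set $N=2n-1$, $M=2m-1$ and rewrite the conditions of Theorem \ref{result35} in terms of $n$ and $m$.

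\textbf{Congruence condition.} Theorem \ref{result35} requires $N,M\equiv1,7\pmod 8$. For positive integers $n$, we have $2n-1\equiv1\pmod 8$ exactly when $2n\equiv2\pmod 8$, that is, $n\equiv1\pmod 4$. Likewise $2n-1\equiv7\pmod8$ exactly when $2n\equiv0\pmod8$, that is, $n\equiv0\pmod4$. Hence the congruence condition becomes $n,m\equiv0,1\pmod4$, and the same computation applies to $m$.

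\textbf{Exceptions.} Theorem \ref{result35} also excludes $\{N,M\}=\{1,p\}$ with $p\in\{9,15,17,23,41,47\}$. Since $N=1$ exactly when $n=1$, the excluded cases are $\{n,m\}=\{1,(p+1)/2\}$. The values $(p+1)/2$ are $5,8,9,12,21,24$. Each of these is $\equiv0,1\pmod4$, so none is already removed by the congruence condition, and the exceptional list in the statement is exactly right. Both directions of the equivalence then follow at once, because Lemma \ref{GOC-GDF} is itself an equivalence and Theorem \ref{result35} is an if-and-only-if.

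There is no real obstacle here; the translation is immediate. The only point needing care is confirming that the map $n\mapsto 2n-1$ is a bijection from positive integers onto the odd positive integers. This matters because Theorem \ref{result35} implicitly works only with odd $N,M$ (the set $[N]$ is defined only for odd $N$), so every GDF parameter pair arising there comes from exactly one pair $(n,m)$.
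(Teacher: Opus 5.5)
Your proposal is correct and is exactly the paper's argument: the paper obtains this theorem directly from Theorem~\ref{result35} through the equivalence in Lemma~\ref{GOC-GDF}. Your substitution $N=2n-1$, $M=2m-1$ supplies the computation the paper leaves implicit: $N\equiv1\pmod 8$ iff $n\equiv1\pmod 4$, $N\equiv7\pmod 8$ iff $n\equiv0\pmod 4$, and the exceptional values become $(p+1)/2\in\{5,8,9,12,21,24\}$.
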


\begin{theorem}\label{GOC36-result}
There exists a   perfect $(n\times m,\{3,6\},1)$-GOC if and only if $n,m\equiv0$ or $1\pmod{3}$, $\{n,m\}\notin\{\{1,p\},\{3,3\}\}$, where~$p\in \{7,10,19\}$.
\end{theorem}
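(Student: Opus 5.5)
The statement is the last theorem above, Theorem~\ref{GOC36-result}. It is obtained by translating Theorem~\ref{result36} through the equivalence of Lemma~\ref{GOC-GDF}.

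\emph{Reduction.} By Lemma~\ref{GOC-GDF}, a perfect $(n\times m,\{3,6\},1)$-GOC exists if and only if a $((2n-1)\times(2m-1),\{3,6\},1)$-GDF exists. Set $N=2n-1$ and $M=2m-1$. These are odd positive integers, and the map $n\mapsto 2n-1$ is a bijection from positive integers onto odd positive integers. So it suffices to rewrite the conditions of Theorem~\ref{result36} on $(N,M)$ as conditions on $(n,m)$.

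\emph{The congruence condition.} Since $N$ and $M$ are odd, $NM\equiv1\pmod 6$ is equivalent to $NM\equiv 1\pmod 3$. This holds exactly when $N\equiv M\equiv 1\pmod 3$ or $N\equiv M\equiv 2\pmod 3$. Now $2n-1\equiv1\pmod3$ iff $n\equiv1\pmod3$, and $2n-1\equiv2\pmod3$ iff $n\equiv0\pmod3$. Hence the condition becomes: $n\equiv m\equiv 1\pmod 3$, or $n\equiv m\equiv 0\pmod 3$. This is the reading of ``$n,m\equiv0$ or $1\pmod3$'' in the statement: $n$ and $m$ lie in the same residue class, and that class is $0$ or $1$.

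\emph{The exceptions.} We translate each exceptional pair using $N=2n-1$.
\begin{itemize}
\item $\{N,M\}=\{1,p\}$ with $p\in\{13,19,37\}$ corresponds to $\{n,m\}=\{1,(p+1)/2\}$, that is, $(p+1)/2\in\{7,10,19\}$.
\item $\{N,M\}=\{5,5\}$ corresponds to $\{n,m\}=\{3,3\}$.
\end{itemize}
Combining the congruence condition with these exceptions reproduces exactly the list in the statement.

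The only point needing care is stating the ``same residue class'' reading correctly. The mixed case $n\equiv0$, $m\equiv1\pmod 3$ must be excluded: it gives $N\equiv2$ and $M\equiv1\pmod3$, so $NM\not\equiv1\pmod 3$. With that interpretation made explicit, the proof is a direct substitution.
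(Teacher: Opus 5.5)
Your proposal is correct and is the paper's argument: the paper obtains this theorem by translating Theorem~\ref{result36} through Lemma~\ref{GOC-GDF} with $N=2n-1$, $M=2m-1$. Your congruence computation and the translated exceptions $\{1,p\}\mapsto\{1,(p+1)/2\}$, $\{5,5\}\mapsto\{3,3\}$ are right, and your same-residue-class reading of ``$n,m\equiv0$ or $1\pmod 3$'' is the correct one, since the mixed case genuinely fails (e.g.\ $\{n,m\}=\{1,3\}$ gives $NM=5\not\equiv1\pmod 6$).
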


\begin{theorem}\label{GOC345-result}
There exists a   perfect $(n\times m,\{3,4,5\},1)$-GOC if and only if $n,m \neq 2$ and $\{n,m\}\notin \{\{1,p\},\{3,4\},\{3,5\},\{3,7\}\}$, where $p\in \{3,5,6,8,9,11,12,14,15,18,21,24, 27\}$.
\end{theorem}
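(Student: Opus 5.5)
The statement should follow by translating Theorem \ref{xinresult345} through Lemma \ref{GOC-GDF}. A perfect $(n\times m,\{3,4,5\},1)$-GOC exists if and only if a $((2n-1)\times(2m-1),\{3,4,5\},1)$-GDF exists. So I will set $N=2n-1$ and $M=2m-1$ and rewrite each condition of Theorem \ref{xinresult345} in terms of $n,m$.

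The parity condition $N,M\equiv 1\pmod 2$ holds automatically for every positive integer $n,m$, so it imposes no restriction. The condition $N,M\neq 3$ becomes $n,m\neq 2$. Each excluded unordered pair $\{N,M\}$ pulls back to a pair $\{n,m\}$ via $n=(N+1)/2$.
\begin{itemize}
\item $\{5,7\}$ becomes $\{3,4\}$.
\item $\{5,9\}$ becomes $\{3,5\}$.
\item $\{5,13\}$ becomes $\{3,7\}$.
\item $\{1,p\}$ with $p\in\{5,9,11,15,17,21,23,27,29,35,41,47,53\}$ becomes $\{1,p'\}$ with $p'=(p+1)/2\in\{3,5,6,8,9,11,12,14,15,18,21,24,27\}$.
\end{itemize}
These are exactly the exclusions in the statement.

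Both directions then follow immediately. For sufficiency, take $n,m$ satisfying the hypotheses. The pair $(2n-1,2m-1)$ then meets the hypotheses of Theorem \ref{xinresult345}, so the GDF exists and Lemma \ref{GOC-GDF} gives the perfect GOC. For necessity, a perfect GOC gives a GDF by Lemma \ref{GOC-GDF}, and the necessary conditions of Theorem \ref{xinresult345} then pull back to the stated ones.

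There is no real obstacle. The only care needed is that the exclusion list is transcribed correctly under $p\mapsto (p+1)/2$, and that the unordered-pair convention (symmetry in $n,m$) is preserved, which holds because both the GDF and the GOC conditions are symmetric.
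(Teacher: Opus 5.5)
Your proposal is correct and matches the paper's argument: the paper obtains this theorem by translating Theorem \ref{xinresult345} through Lemma \ref{GOC-GDF} via $N=2n-1$, $M=2m-1$. Your transcription of the exclusions under $p\mapsto (p+1)/2$ is accurate.
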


\section{Concluding remarks}

Based on the equivalence between variable-weight perfect GOCs and   GDFs,  $(n\times m,K,1)$-GDFs were investigated in this paper. By using some auxiliary designs and several recursive constructions, the existence of an $(n\times m, \{3,5\}, 1)$-GDF and an $(n\times m, \{3,6\}, 1)$-GDF  is given. And the remaining parameters of $(n\times m, \{3,4,5\}, 1)$-GDFs have also been completely resolved. As consequences, the  corresponding  perfect $(n\times m,\{3,5\},1)$-GOCs, $(n\times m,\{3,6\},1)$-GOCs and $(n\times m,\{3,4,5\},1)$-GOCs are established.

For variable-weight $(n\times m,K,1)$-GOC, $\mathcal{C}$, if we limit that: there are exactly $q_{i}|\mathcal{C}|$ codewords of weight $k_{i}$, i.e., $q_{i}$ indicates the fraction of codewords of weight $k_{i}$, where $K = \{k_{1}, k_{2}, \ldots, k_{s}\}$, $Q = \{q_{1}, q_{2}, \ldots, q_{s}\}$, and $\sum _{i=1}^{s}q_{i}=1$. The kind of GOCs with this proportion is also a problem worth exploring.

\vspace{0.5cm}

\newpage
{\centering\title{\bf\Large{ Appendix}}}
\appendix
\section{ $(n,\{3,5\},1)$-GDFs with small parameters}\label{app-A}
The  parameters and the base blocks  are listed below.\\

\noindent1.
$n=65$\vspace{0.22cm}
{\scriptsize$$
$$}

\section{ $(n\times m,\{3,5\},1)$-GDFs with small parameters}\label{app-B}
The parameters and the base blocks  are listed below.\\

\noindent1.~$(n,m)=(7,9):$
{\scriptsize  $$%
$$}

\section{ $(n\times m,\{3,6\},1)$-GDFs with small parameters}\label{app-C}
The parameters and the base blocks   are listed below.\\

\noindent1.~$(n,m)=(5,11)$

{\scriptsize $$%
$$}

\newpage
\section{$(n,3,1)$-GDPs and $(n,\{3,5\},1)$-GDPs with leave $[h]^r$ \\  and  small parameters}\label{app-D}
The parameters and the base blocks   are listed below.\\\

\noindent1.
$(n,h,r)=(17,5,2)$\vspace{0.22cm}
{\scriptsize$$%
$$}

\section{ $(n\times m,\{3,4,5\},1)$-GDFs with small parameters}\label{app-E}
The parameters and the  base blocks   are listed below.\\

\noindent1.~$(n,m)=(5,45)$

{\scriptsize $$%
$$}

\newpage
\section{ Program}\label{app-F}
The following is the program regarding exhaustive search in Lemma \ref{non345}.

\begin{lstlisting}[language=Python]
from itertools import product
import sys

groups = [[0, 1, 2], [0, 2, 2], [0, 1, 2, 2], [0, 0, 1, 2, 2], [0, 0, 1, 2, 2]]
target_to_bit = {}
bit = 0
for v in range(1, 7):
    target_to_bit[(0, v)] = bit
    bit += 1
for dy in range(-6, 7):
    target_to_bit[(1, dy)] = bit
    bit += 1
for dy in range(-6, 7):
    target_to_bit[(2, dy)] = bit
    bit += 1

ALL_MASK = (1 << 32) - 1

def get_valid_configs(xs):
    n = len(xs)
    configs = []
    for ys in product(range(7), repeat=n):
        used_bits = 0
        ok = True
        for i in range(n):
            for j in range(i + 1, n):
                dx = xs[j] - xs[i]
                dy = ys[j] - ys[i]

                if dx == 0:
                    abs_dy = abs(dy)
                    if abs_dy not in range(1, 7):
                        ok = False
                        break
                    key = (0, abs_dy)
                elif dx == 1:
                    if dy not in range(-6, 7):
                        ok = False
                        break
                    key = (1, dy)
                elif dx == 2:
                    if dy not in range(-6, 7):
                        ok = False
                        break
                    key = (2, dy)
                else:
                    ok = False
                    break

                b = target_to_bit[key]

                if used_bits & (1 << b):
                    ok = False
                    break
                used_bits |= (1 << b)

            if not ok:
                break
        if ok:
            configs.append((ys, used_bits))
    return configs


print("Precomputing valid configurations for each group...")
all_configs = []
for i, xs in enumerate(groups):
    cfgs = get_valid_configs(xs)
    print(f"  Group {i+1} (x={xs}): {len(cfgs)} valid configurations")
    all_configs.append(cfgs)
    if len(cfgs) == 0:
        print(f"Group {i+1} has no valid configuration, no solution.")
        sys.exit()


g4_configs = all_configs[3]
g5_configs = all_configs[4]

dict4 = {}
for ys, mask in g4_configs:
    if mask not in dict4:
        dict4[mask] = ys

dict5 = {}
for ys, mask in g5_configs:
    if mask not in dict5:
        dict5[mask] = ys

print(f"\nGroup 4 after deduplication has {len(dict4)} distinct masks")
print(f"Group 5 after deduplication has {len(dict5)} distinct masks")
total_pairs_last_two = (
    len(groups[3]) * (len(groups[3]) - 1) // 2
    + len(groups[4]) * (len(groups[4]) - 1) // 2)
pair_dict = {}
for mask4, ys4 in dict4.items():
    for mask5, ys5 in dict5.items():
        if mask4 & mask5:
            continue
        combined = mask4 | mask5
        if combined.bit_count() == total_pairs_last_two:
            if combined not in pair_dict:
                pair_dict[combined] = (ys4, ys5)
print(f"Group 4+5 valid pairings: {len(pair_dict)}")

c1 = all_configs[0]
c2 = all_configs[1]
c3 = all_configs[2]

print("\nStarting search for the first three groups...")
solution = [None] * 5

for ys1, mask1 in c1:
    for ys2, mask2 in c2:
        if mask1 & mask2:
            continue
        mask12 = mask1 | mask2
        for ys3, mask3 in c3:
            if mask12 & mask3:
                continue
            mask123 = mask12 | mask3
            remaining = ALL_MASK ^ mask123
            if remaining in pair_dict:
                ys4, ys5 = pair_dict[remaining]
                solution[0] = ys1
                solution[1] = ys2
                solution[2] = ys3
                solution[3] = ys4
                solution[4] = ys5
                print("\nSolution found!")
                for i, ys in enumerate(solution):
                    print(f"Group {i+1}: x={groups[i]}, y={list(ys)}")
                    print(f"   Points: {list(zip(groups[i], ys))}")
                print("\nAll ordered pairs :")
                all_pairs = []
                for gi, xs in enumerate(groups):
                    ys = solution[gi]
                    for i in range(len(xs)):
                        for j in range(i + 1, len(xs)):
                            dx = xs[j] - xs[i]
                            dy = ys[j] - ys[i]
                            if dx == 0:
                                all_pairs.append((0, abs(dy), gi + 1))
                            else:
                                all_pairs.append((dx, dy, gi + 1))
                for p in sorted(all_pairs):
                    print(p)
                unique = set((a, b) for a, b, _ in all_pairs)
                print(f"\nTotal {len(all_pairs)} pairs, {len(unique)} unique pairs")
                sys.exit()
print("\nNo solution found.")
print("This means that under the strict condition that each target pair appears exactly once, there is no second-coordinate assignment for these 5 groups that satisfies the condition.")
print("Therefore, this is a computational proof.")
\end{lstlisting}

\end{document}